\newcommand{\A}{\mathcal A}
\newcommand{\Cc}{\mathcal C}
\newcommand{\Ee}{\mathcal E}
\newcommand{\ee}{\varepsilon}
\newcommand{\Hh}{\mathcal H}
\newcommand{\KL}{\mathrm{KL}}
\newcommand{\Ll}{\mathcal{L}}
\newcommand{\one}[1]{\mathbf{1}_{#1}}
\newcommand{\Pp}{\mathbb P}
\newcommand{\PR}{\mathcal P}
\newcommand{\R}{\mathbb R}
\newcommand{\V}{\mathcal V}
\newcommand{\X}{\mathcal X}
\newcommand{\Y}{\mathcal Y}
\DeclareMathOperator*{\aff}{aff}
\DeclareMathOperator*{\argmax}{arg\,max}
\DeclareMathOperator*{\conv}{conv}
\DeclareMathOperator*{\inter}{int}
\DeclareMathOperator*{\lin}{lin}
\DeclareMathOperator*{\relint}{rel\,int}
\DeclareMathOperator*{\Span}{Span}
\DeclareMathOperator*{\supp}{Supp}
\newtheorem{definition}{Definition}
\newtheorem{lemma}{Lemma}
\newtheorem*{lemma*}{Lemma}
\newtheorem{proposition}{Proposition}
\newtheorem{theorem}{Theorem}
\newtheorem{corollary}{Corollary}
\newenvironment{manual}[1]{

  \manin
}{\endmanin}
\title{Optimal e-value testing for properly constrained hypotheses\\\vspace{.2cm}}
\author{Eugenio Clerico\footnote{Correspondence at: eugenio.clerico@gmail.com}}
\affil{\small{\textit{Universitat Pompeu Fabra, Barcelona, Catalunya (Spain)}}}
\date{\vspace{-1cm}}
\begin{document}
\maketitle

\begin{abstract}\noindent
Hypothesis testing via e-variables can be framed as a sequential betting game, where a player each round picks an e-variable. A good player's strategy results in an effective statistical test that rejects the null hypothesis as soon as sufficient evidence arises. Building on recent advances, we address the question of restricting the pool of e-variables to simplify strategy design without compromising effectiveness. We extend the results of \cite{clerico2024optimality}, by characterising optimal sets of e-variables for a broad class of non-parametric hypothesis tests, defined by finitely many regular constraints. As an application, we discuss this notion of optimality in algorithmic mean estimation, including for heavy-tailed random variables.
\end{abstract}

\section{Introduction}\label{sec:introduction}
Hypothesis testing is the branch of statistics concerned with verifying if an observed data set is consistent with a given theory, often referred to as the \emph{null hypothesis}. Traditionally, this is formulated in terms of \emph{p-values}, which represent the probability of obtaining an empirical test statistic at least as ``extreme'' as the one observed, assuming that the null hypothesis holds. However, most classical p-value methods are designed for single, fixed-sample experiments. In modern research, where data are often collected iteratively or experiments are repeated, recalculating p-values with additional data can lead to false positives and misleading conclusions. To overcome these limitations, recent developments propose using \emph{e-values} as a more reliable alternative for sequential and adaptive hypothesis testing. 

E-values quantify evidence against a hypothesis and allow flexible data collection strategies, such as optional stopping or continuation, and post hoc significance level tuning. While related ideas were developed decades ago by early works \citep{wald1945sequential, darling1967confidence, lai1976confidence, siegmund1978estimation}, interest in e-values surged with a series of papers whose first preprints appeared over the space of few months in 2019 \citep{wasserman2020universal, shafer2021testing, vovk2021evalues, grunwald2024safe}. In the present work, we focus on a particular case of testing with e-values, which can be phrased in terms of a sequential \emph{betting game} \citep{ramdas2023game}, adopting a game-theoretic perspective on probability recently developed by \cite{shafer2001probability,shafer2019game}.  In this game, at each round the player selects a \emph{single round e-variable}, namely a non-negative function with an expected value of at most one under the null hypothesis. The player then receives a reward, determined by the value of the picked e-variable at the next observed data point. If a significant grow of the cumulative reward is observed, the null hypothesis can be confidently rejected, as standard martingale analysis shows that this is unlikely to happen under the null hypothesis. We refer to \cite{ramdas2024hypothesis} for an overview on e-value testing and game-theoretic interpretations.

A testing approach is effective if it leads to the rejection of a hypothesis as soon as there is enough evidence against it in the data. In our framework, this can be achieved if the player adopts a sound strategy, which yields high rewards whenever possible. The theoretical foundation for selecting such strategies in sequential games lies in \emph{online learning} theory \citep{cesabianchi2006prediction, orabona2023modern}. To simplify strategy design and leverage online learning theoretical tools, it is often convenient to restrict the pool of e-variables among which the player can pick each round. In order to make this point more explicit, \citet{clerico2024optimality} introduced the concept of \emph{restricted} testing game. A key question is whether such restriction is detrimental for the test, by preventing the selection of strategies that could yield higher rewards. Tor many instantiations of ths \textit{testing by betting} framework the answer is \textit{no}, provided the restriction is well chosen. Moreover, in some cases, there exists an \emph{optimal} class of e-variables to which one should restrict the game, informally the smallest set within which any good strategy must choose the e-variables. These ideas were developed in \citet{clerico2024optimality}, in the context of a simple e-variable testing problem that was motivated by mean estimation for a random variable bounded in $[0,1]$. This work extends these results to a wider class of hypothesis testing problems, by characterising the optimal set of single round e-variables for non-parametric hypotheses  defined by a finite number of suitably regular constraints. This class is broad enough to capture meaningful problems, while allowing us to use standard analysis techniques, without relying on advanced tools from functional analysis and measure theory.

Notions of optimality for single e-variables, such as \emph{log-optimality} \citep{koolen2022log, larsson2024numeraire, grunwald2024safe} and \emph{admissibility} \citep{ramdas2022admissible}, have been previously extensively explored in the sequential hypothesis testing literature. However, as highlighted by \citet{clerico2024optimality}, the key distinction in our approach is the emphasis on optimality at the level of sets of e-variables, rather than focusing on individual e-variables. More explicitly, we show that for a class of constrained hypotheses, the whole set of single-round e-variables is dominated by the set of all maximal e-variables, namely e-variables that cannot be dominated by any other e-variables. We remark here that the notion of \textit{maximality} of an e-variable is essentially equivalent to that of \textit{admissibility} in classical statistics. Somehow our result can be seen as an existence and characterisation theorem of a \textit{minimal complete} class of admissible e-variables. However, in the e-variable literature, it is common to define admissibility with respect to an \textit{alternative} hypothesis (see \citealp{ramdas2022admissible}), while the current paper does not rely on any given alternative. For this reason we prefer to keep the term \textit{maximal} (typical in the literature on posets), as done in \cite{clerico2024optimality}.

As a final remark, we stress once more that the results of this paper only focus on optimality of testing by betting via e-variables. This is by far not the most general type of sequential testing.  An important open question is to characterise optimality for more general forms of sequential testing. More concretely, this paper is only one first step towards characterising all the admissible sequential tests, as it characterises the form of maximal single-round e-variables.

\subsection{Structure}
This paper is organised as follows. In \Cref{sec:AHT}, we briefly review e-value hypothesis testing by betting. \Cref{sec:majopt} introduces the key definitions and basic properties of majorising and optimal classes of e-variables, while \Cref{sec:PCH} defines the concept of properly constrained hypothesis, which serves as the central framework for this work. \Cref{sec:dual} discusses the dual class of e-variables for properly constrained hypotheses and states our main result: the optimality of the dual class (\Cref{thm:dualopt}), which will be proved throughout the subsequent sections. Specifically, we begin by addressing the case of a finite sample space in \Cref{sec:finite}, and then introduce the notion of matching sets in \Cref{sec:matching}, which will be an essential tool to extend the optimality results to general closed sample spaces in \Cref{sec:optimal}. In \Cref{sec:extensions}, we broaden our results, by addressing tests of finitely (but not properly) constrained hypotheses, and by considering hypotheses defined via inequality constraints. In \Cref{sec:AME}, our results are applied to the problem of deriving confidence sequences for the mean of (bounded and heavy-tailed) random variables. Lastly, \Cref{sec:conclusion} provides concluding remarks and outlines potential directions for future research.

\subsection{Notation}
Before delving into the main discussion, we first introduce the key notations and conventions used in this paper. We will often make use of topological notions (e.g., continuity, closedness, compactness, Borel measurability) on $\R^n$. The underlying topology is always implicitly assumed to be the standard Euclidean one. Throughout the whole paper, $\X$ denotes a closed set in $\R^n$. 

For an arbitrary set $\Y\subseteq\R^n$, we let $\PR_\Y$ be the set of Borel probability measures on $\R^n$ whose support is included in $\Y$. If $\X\subseteq\R^n$ is closed, with a slight abuse of notation we can identify $\PR_\X$ with the set of all Borel probability measures on $\X$ (with respect to the induced topology).
For $P\in\PR_\Y$ and $\phi:\Y\to[0,+\infty)$ a Borel function, $\langle P, \phi(Y)\rangle$ (or more compactly $\langle P, \phi\rangle$) denotes the expectation of $\phi$ under $Y\sim P$. For $y\in\Y$, $\delta_y$ is the Dirac mass on $y$, namely $\langle\delta_y,\phi\rangle = \phi(y)$. For $\Phi:\Y\to\R^m$, let $\|\Phi\|_1:\Y\to\R$ denote the mapping $y\mapsto\|\Phi(y)\|_1$, with $\|\cdot\|_1$ the 1-norm in $\R^m$. If $\Phi$ is Borel and $\langle P, \|\Phi\|_1\rangle $ is finite, then the expectation of $\Phi$ under $P$ is well defined (in $\R^m$) and we still denote it as $\langle P, \Phi\rangle$. As we will often define classes of probability measures in terms of the expectation with respect to some fixed Borel function $\Phi:\Y\to\R^m$, it is convenient to introduce the notation $\PR_\Phi$ for the set of all measures in $\PR_\Y$ for which $\langle P, \|\Phi\|_1\rangle$ is finite. 

For $S\subseteq\X$, and $\Phi:\X\to\R^m$, we let $\Phi\big|_S:S\to\R^m$ be the restriction of $\Phi$ to $S$. We denote as $v\cdot u$ the scalar product when $u$ and $v$ are Euclidean vectors. Hence, for $\Phi:\X\to\R^m$ and $\lambda\in\R^m$, $\lambda\cdot\Phi$ is a real function $\X\to\R$, mapping $x$ to $\lambda\cdot\Phi(x)$. We denote as $\Span\Phi$ the linear span of the set $\{\Phi_1,\dots,\Phi_m\}$ in the vector space of real functions on $\X$ (where $\Phi_i$ is the $i$-th component of $\Phi$), namely $\Span\Phi = \{\lambda\cdot\Phi\,,\,\lambda\in\R^m\}$. For $\Phi:\X\to\R^m$ and $S\subseteq\X$, $\Phi(S)$ is the image of $S$ under $\Phi$, namely $\{\Phi(x)\,:\,x\in S\}$. In particular, $\Phi(\X)$ refers to the image of $\Phi$.

Given a set $\Y\subseteq\R^n$, we denote as $\conv\Y$ its convex hull, and as $\aff\Y$ its affine hull. We use the notation $\lin\Y$ for the linear hull (span) of $\Y$. We denote as $\inter\Y$ the interior of $\Y$ and as $\relint\Y$ its relative interior. See \Cref{app:convex} for definitions from elementary convex analysis.

Sequences are denoted as $(x_t)_{t\geq T_0}$, where $t$ is an integer-valued index and $T_0$ its smallest value. For high-probability statements, $\Pp$ expresses probability with respect to all the involved randomness. For instance, if $(X_t)_{t\geq 1}$ is a sequence of independent draws from $P\in\PR_\X$, we write $\Pp\big(X_t\geq 0\,,\,\forall t\geq 1\big)$.

\section{Algorithmic hypothesis testing}\label{sec:AHT}

For the whole paper, $\X$ denotes a closed set in $\R^n$, endowed with the Borel sigma-field. We let $\PR_\X$ be the set of Borel probability measures on $\X$. 

\begin{definition}[Hypotheses and e-variables]
A \emph{hypothesis} (on $\X$) is a non-empty subset $\Hh$ of $\PR_\X$. Given a hypothesis $\Hh$, an \emph{e-variable} (with respect to $\Hh$) is a non-negative Borel function $E:\X\to[0,+\infty)$, such that $\langle P, E\rangle\leq 1$ for any $P\in\Hh$. We denote as $\Ee_\Hh$ the set of all the e-variables with respect to $\Hh$. We call \emph{e-class} any subset of $\Ee_\Hh$.
\end{definition}

We remark that we are here focused only on \textit{single-round} hypotheses and e-variables. In particular, we are not considering hypotheses on a whole sequence of observations (in $\X^{\infty}$, as done for instance in \citealp{ramdas2022admissible}), but only on $\X$. In a way, we will be given for granted that the data come from as independent draws from some distribution on $\X$. 

Over the last few years, e-variables have become a fundamental tool in hypothesis testing, as they provide a practical and flexible alternative to the traditional p-value-based methods, allowing to overcome several
known limitations of conventional testing procedures \citep{wang2022false,grunwald2024safe, grunwald2024beyond}. E-variable testing is usually framed in terms of a sequential betting game, where at each round a player picks an e-variable \citep{ramdas2024hypothesis}. Using the terminology of \cite{clerico2024optimality}, we define the following \emph{testing game}.

\begin{definition}[Testing game]\label{def:game}
    Fix a hypothesis $\Hh$ on $\X$, and an e-class $\Ee\subseteq\Ee_\Hh$. We call \emph{$\Ee$-restricted testing game} (on $\Hh$) the following sequential procedure. Each round $t\geq 1$, a player 
    \begin{itemize}\setlength{\itemsep}{0pt}
        \item measurably\footnote{We assume that the choice of $E_t$ is measurable, namely the mapping $(x_1,\dots x_t)\mapsto E_t(x_t)$ is Borel measurable.} selects $E_t\in\Ee$, based solely on the previous observations $x_1,\dots, x_{t-1}$;
        \item observes a new data point $x_t\in\X$;
        \item earns a reward $\log E_t(x_t)\in[-\infty, +\infty)$. 
    \end{itemize}
    If $\Ee=\Ee_\Hh$, we speak of \emph{unrestricted} testing game.
\end{definition}
If the data points observed during the game are independent draws from  $P \in \Hh$, the player's cumulative reward is unlikely to grow excessively. This is formalised by the next statement, whose proof (see, e.g., \citealp{ramdas2023game, ramdas2024hypothesis, clerico2024optimality}) follows directly from Ville's inequality \citep{ville1939etude}, a uniform upper bound for non-negative super-martingales.

\begin{proposition}\label{prop:game}
    Let $\Hh$ be a hypothesis on $\X$ and consider a sequence $(X_t)_{t\geq 1}\subseteq\X$ of independent draws from some $P\in\Hh$. Fix $\delta\in(0,1)$ and $\Ee \subseteq \Ee_\Hh$. Consider an $\Ee$-restricted testing game on $\X$, where the player observes the sequence $(X_t)_{t\geq 1}$. Denote the player's cumulative reward at round $T$ as $R_T=\sum_{t=1}^T\log E_t(X_t)$. Then, we have
    $$\Pp\big(R_T\leq\log\tfrac{1}{\delta}\,,\;\forall T\geq 1\big)\geq 1-\delta\,.$$
\end{proposition}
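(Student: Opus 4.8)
The plan is to recognise the exponentiated cumulative reward as a non-negative supermartingale and then invoke Ville's inequality, exactly as the statement hints. Concretely, set $M_0=1$ and $M_T=\exp(R_T)=\prod_{t=1}^{T}E_t(X_t)$ for $T\ge 1$, and let $\F_t=\sigma(X_1,\dots,X_t)$ be the natural filtration, with $\F_0$ trivial. Each $M_T$ is non-negative and $\F_T$-measurable (using the measurability assumption on the player's choices in \Cref{def:game}). The first substantive step is to show that $\E[M_T\mid\F_{T-1}]\le M_{T-1}$ almost surely, and that each $M_T$ is integrable.

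For the conditional-expectation step, the key observations are: (i) by the rules of the game $E_T$ depends only on $X_1,\dots,X_{T-1}$, so it is $\F_{T-1}$-measurable as a (random) function; and (ii) $X_T\indep\F_{T-1}$ because the $X_t$ are i.i.d.\ draws from $P$. Freezing $\F_{T-1}$ via the standard substitution/disintegration argument for conditional expectations, one gets $\E[E_T(X_T)\mid\F_{T-1}]=\langle P,E_T\rangle$, and since $E_T\in\Ee\subseteq\Ee_\Hh$ and $P\in\Hh$, this is at most $1$. Hence $\E[M_T\mid\F_{T-1}]=M_{T-1}\,\E[E_T(X_T)\mid\F_{T-1}]\le M_{T-1}$, and inductively $\E[M_T]\le 1$, so each $M_T$ is integrable and $(M_T)_{T\ge 0}$ is a non-negative supermartingale started at $1$.

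It then remains to apply Ville's maximal inequality: for a non-negative supermartingale $(M_T)_{T\ge 0}$ with $\E[M_0]\le 1$ and any $a>0$, one has $\Pp\big(\exists T\ge 1:M_T\ge a\big)\le 1/a$. Taking $a=1/\delta$ gives $\Pp\big(\exists T:M_T\ge 1/\delta\big)\le\delta$, i.e.\ $\Pp\big(M_T<1/\delta\ \text{for all }T\big)\ge 1-\delta$. Since $R_T=\log M_T$, the event $\{M_T<1/\delta\ \forall T\}$ is contained in $\{R_T\le\log\tfrac1\delta\ \forall T\}$, which yields the claim. The reward $\log E_t(X_t)$ may equal $-\infty$ when $E_t(X_t)=0$, but this causes no difficulty, as $M_T$ stays non-negative and the supermartingale property is unaffected.

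The part requiring the most care — though it is not deep — is justifying $\E[E_T(X_T)\mid\F_{T-1}]=\langle P,E_T\rangle$ when $E_T$ is itself a function-valued random object that is $\F_{T-1}$-measurable. This is precisely where the Borel-measurability assumption on $(x_1,\dots,x_t)\mapsto E_t(x_t)$ is used, so that the substitution lemma for conditional expectations applies and the bound $\langle P,E_T\rangle\le 1$ can be invoked pointwise in the frozen past.
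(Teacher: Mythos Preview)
Your proposal is correct and follows exactly the approach the paper indicates: the paper does not spell out a proof but simply remarks that the result ``follows directly from Ville's inequality'' applied to the non-negative supermartingale $M_T=\prod_{t=1}^T E_t(X_t)$, which is precisely what you do. Your added care about measurability of $E_T$ and the possibility $E_t(X_t)=0$ only makes the argument more complete than the paper's one-line reference.
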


The cumulative reward of the player can be interpreted as evidence accumulated against $\Hh$. Specifically, \Cref{prop:game} justifies a sequential testing procedure, where the null hypothesis (asserting that the data are generated from a $P \in \Hh$) is rejected whenever the player's cumulative reward exceeds the threshold value of $\log(1/\delta)$. Here, $\delta\in(0,1)$ denotes the type I confidence level, meaning that if the null hypothesis is true, it will be rejected with probability at most $\delta$. It is worth noticing that the high-probability inequality in \Cref{prop:game} holds uniformly for any data set size $T$. This ensures that when data are collected sequentially, optional stopping or continuation are allowed, an advantage not afforded by standard p-value methods. For a comprehensive overview of sequential testing by betting, we refer to Chapter 6 of \cite{ramdas2024hypothesis}.

It is clear that a player always selecting the constant function $1$ will never reject a hypothesis, as its cumulative reward is always null. To get an effective test, we want the cumulative reward to be ``as large as possible'', so that the null hypothesis is rejected whenever there is sufficient evidence against it in the observed data. It is crucial to carefully determine the pool $\Ee$ of e-variables from which the player can pick: a poor choice may exclude useful e-variables, while there is no advantage in keeping redundant ones (e.g., the constant function $1/2$). A well chosen $\Ee$ can simplify the problem and enhance the design of effective betting strategy, without compromising the strength of the test. This point was addressed by \cite{clerico2024optimality}, with the introduction of the notions of majorising and optimal e-classes, which are the subject of the next section.

\section{Majorising e-classes and optimal e-class}\label{sec:majopt}

We now present the definitions and basic properties of majorising and optimal e-classes, as introduced in \cite{clerico2024optimality}. 
In the following, fix a closed $\X\subseteq\R^n$ and a hypothesis $\Hh$ on $\X$. 
We consider the standard partial ordering for the real functions on $\X$: given two functions $f$ and $f'$ from $\X$ to $\R$, we say that $f$ \emph{majorises} $f'$, and write $f\succeq f'$, if $f(x)\geq f'(x)$ for all $x\in\X$. If $f\succeq f'$ and $f\neq f'$, $f$ is a \emph{strict majoriser} of $f'$, and we write $f\succ f'$. We will also use the symbols $\preceq$ and $\prec$, with obvious meaning.

\begin{definition}[Majorising and optimal e-classes]\label{def:maj}
    An e-class $\Ee\subseteq\Ee_\Hh$ is a \emph{majorising} e-class when, for any $E\in\Ee_\Hh$, we can find an e-variable $E'\in\Ee$ such that $E'\succeq E$. If a majorising e-class is contained in every other majorising e-class, it is called \emph{optimal}.
\end{definition}
It is clear that $\Ee_\Hh$ is itself a majorising e-class. 
Conversely, an optimal e-class may not exist. If it does exist, it is unique and corresponds to the intersection of all majorising e-classes. Next, we provide a necessary and sufficient condition for the existence of the optimal e-class, based on the notion of maximality for e-variables. For the proof, see Lemmas 3 and 4 in \cite{clerico2024optimality}.
\begin{definition}[Maximal e-variables]A \emph{maximal} e-variable is an e-variable $E\in\Ee_\Hh$ that has no strict majoriser, namely there is no $E'\in\Ee_\Hh$ such that $E'\succ E$.
\end{definition}
\begin{lemma}\label{lemma:maxopt}
    Every majorising e-class includes all maximal e-variables. An optimal e-class exists if, and only if, there is a majorising e-class whose elements are all maximal. In particular, if an optimal e-class exists, it is unique and it corresponds to the set of all maximal e-variables.
\end{lemma}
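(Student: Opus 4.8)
The plan is to establish the three assertions in turn; each follows fairly directly by unwinding the definitions, so the proof is short.

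First I would observe that every maximal e-variable $E$ belongs to \emph{every} majorising e-class $\Ee$. Indeed, since $\Ee$ is majorising there is $E'\in\Ee\subseteq\Ee_\Hh$ with $E'\succeq E$; maximality of $E$ rules out $E'\succ E$, so $E'=E$ and hence $E\in\Ee$. Writing $\M$ for the set of all maximal e-variables (an e-class, since maximal e-variables lie in $\Ee_\Hh$ by definition), this says that $\M$ is contained in every majorising e-class.

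Next, the biconditional. For the ``if'' direction, let $\Ee_0$ be a majorising e-class all of whose members are maximal, so that $\Ee_0\subseteq\M\subseteq\Ee_\Hh$. Since $\Ee_0$ is majorising and contained in $\M$, every $E\in\Ee_\Hh$ is majorised by some element of $\Ee_0$, hence by an element of $\M$; thus $\M$ is itself a majorising e-class, and by the first step it is contained in every majorising e-class, i.e.\ $\M$ is optimal. For the ``only if'' direction, let $\Ee^\star$ be optimal and suppose, toward a contradiction, that some $E\in\Ee^\star$ is not maximal, say $E'\succ E$ with $E'\in\Ee_\Hh$. Consider the e-class $\Ee'=(\Ee^\star\setminus\{E\})\cup\{E'\}\subseteq\Ee_\Hh$. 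It is still majorising: given $F\in\Ee_\Hh$, pick $G\in\Ee^\star$ with $G\succeq F$; if $G\neq E$ then $G\in\Ee'$, while if $G=E$ then $E'\succ E\succeq F$ gives $E'\succeq F$ with $E'\in\Ee'$. Optimality of $\Ee^\star$ then forces $\Ee^\star\subseteq\Ee'$, so $E\in\Ee'$; but $E\neq E'$ (as $E'\succ E$) and $E\notin\Ee^\star\setminus\{E\}$, a contradiction. Hence all members of $\Ee^\star$ are maximal, and $\Ee^\star$ is the desired majorising e-class of maximal e-variables.

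Finally, uniqueness and the identification with $\M$ are immediate consequences: two optimal e-classes are each majorising, hence each contains the other, so they coincide; and if $\Ee^\star$ is optimal, then the ``only if'' argument gives $\Ee^\star\subseteq\M$, while the first step gives $\M\subseteq\Ee^\star$, whence $\Ee^\star=\M$. The only point carrying any real content is the verification that replacing $E$ by its strict majoriser $E'$ preserves the majorising property, which is the short case analysis above; I do not anticipate a genuine obstacle.
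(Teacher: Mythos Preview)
Your proof is correct. The paper itself does not give a proof of this lemma but defers to Lemmas~3 and~4 of \cite{clerico2024optimality}; your self-contained argument---in particular the replacement trick $\Ee'=(\Ee^\star\setminus\{E\})\cup\{E'\}$ for the ``only if'' direction---is exactly the kind of elementary poset reasoning one expects here, and it goes through without issue.
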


\begin{corollary}\label{cor:nonex}
     If, for some $x_0\in\X$, $\sup_{P\in\Hh}P(\{x_0\})=0$, there is no optimal e-class for $\Hh$.
\end{corollary}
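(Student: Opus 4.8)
The plan is to leverage the characterisation in \Cref{lemma:maxopt}: an optimal e-class exists if and only if the set of all maximal e-variables is itself a majorising e-class, and in that case it equals that set. So it suffices to show that under the stated assumption there are \emph{no} maximal e-variables at all; then the set of maximal e-variables is empty, and the empty e-class cannot be majorising (since $\Ee_\Hh$ is non-empty, containing for instance the constant function $1$), so no optimal e-class can exist.

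The key observation is that $\sup_{P\in\Hh}P(\{x_0\})=0$ forces $P(\{x_0\})=0$ for \emph{every} $P\in\Hh$, because point masses are non-negative. Consequently, the value an e-variable takes at $x_0$ is irrelevant to the defining constraints of $\Ee_\Hh$: for any Borel $E:\X\to[0,+\infty)$ and any $P\in\Hh$, splitting the integral over $\{x_0\}$ and its complement gives $\langle P,E\rangle=\langle P,E\,\one{\X\setminus\{x_0\}}\rangle$, which does not depend on $E(x_0)$. Hence, given an arbitrary $E\in\Ee_\Hh$, I would set $E'=E+\one{\{x_0\}}$, that is $E'(x_0)=E(x_0)+1$ and $E'(x)=E(x)$ for $x\neq x_0$. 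As $\{x_0\}$ is Borel, $E'$ is a non-negative Borel function, and $\langle P,E'\rangle=\langle P,E\rangle\leq 1$ for all $P\in\Hh$, so $E'\in\Ee_\Hh$; since $E'\succ E$, the e-variable $E$ admits a strict majoriser and is therefore not maximal. As $E$ was arbitrary, $\Ee_\Hh$ contains no maximal e-variable.

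Putting these together, \Cref{lemma:maxopt} then gives the claim: if an optimal e-class existed it would coincide with the set of maximal e-variables, which is empty and hence not majorising, a contradiction. I do not expect any serious obstacle here; the only points needing a little care are the measurability of the pointwise-modified function $E'$ and the (elementary) equivalence between the vanishing supremum and all point masses at $x_0$ being zero. The conceptual content is simply that one can freely inflate any e-variable at a point that is null under the entire hypothesis, so maximality is unattainable.
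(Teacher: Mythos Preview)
Your proposal is correct and follows essentially the same argument as the paper: construct $E'=E+\one{\{x_0\}}$ to show that no e-variable can be maximal, then invoke \Cref{lemma:maxopt}. The only differences are cosmetic (you spell out the measurability of $E'$ and the non-emptiness of $\Ee_\Hh$), but the core idea and structure match the paper's proof exactly.
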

\begin{proof}
    Fix $E\in\Ee_\Hh$ and let $\one{x_0}:\X\to\R$ be $1$ on $x_0$ and $0$ everywhere else. Then, for any $P\in\Hh$, we have $\langle P, E+\one{x_0}\rangle = \langle P, E\rangle \leq 1$, and so $E+\one{x_0}\in\Ee_\Hh$. Since $E+\one{x_0}\succ E$, $E$ is not maximal. We deduce that the set of all maximal e-variables is empty, and we conclude by \Cref{lemma:maxopt}.
\end{proof}

Majorising and optimal e-classes play a fundamental role in our analysis of sequential hypothesis testing. Restricting the game of \Cref{def:game} to a majorising e-class does not hinder the player's performance. To illustrate this, consider two players, Alice and Bob, observing the same sequence of data points. Bob plays an unrestricted testing game, while Alice's choices are restricted to a majorising e-class $\Ee$. Assume that Bob makes his move first, selecting an e-variable $E_t$. Alice, aware of Bob's choice, can then pick $E_t' \in \Ee$ such that, for any possible observation $x_t$, $E_t'(x_t) \geq E_t(x_t)$. Thus, Alice can opt for a strategy ensuring that her cumulative reward is always at least as large as Bob's, even though her choices are restricted to $\Ee$. Now, suppose that $\Ee$ is not just majorising, but optimal. In this case, if Bob chooses an e-variable $E_t$ outside of $\Ee$, Alice can select an alternative $E_t' \in \Ee$ such that $E_t' \succ E_t$. Such $E_t'$ provides rewards at least as good as those of $E_t$ for all possible values of the next observation, and strictly better for at least one value of $x_t$. On the other hand, if Alice picks first, the only way for Bob to be absolutely sure that his reward will not be worse than Alice's is to select the same e-variable, and thus choose from the optimal e-class. This is because Alice's choice is maximal (\Cref{lemma:maxopt}), meaning no other e-variable can majorise it. In this sense, the optimal e-class simplifies designing effective betting strategies, by eliminating all redundant and possibly detrimental e-variables.

We conclude this section with some positive result on the existence of the optimal e-class.

\begin{lemma}\label{lemma:finopt}
    If $\X$ is finite and $\sup_{P\in\Hh}P(\{x\})>0$ for every $x\in\X$, the optimal e-class exists.
\end{lemma}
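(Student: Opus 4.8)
By \Cref{lemma:maxopt}, it suffices to exhibit a majorising e-class whose elements are all maximal; equivalently, to show that every $E\in\Ee_\Hh$ is majorised by some maximal $E'\in\Ee_\Hh$. The plan is to fix $E\in\Ee_\Hh$ and look at the set $M_E=\{E'\in\Ee_\Hh : E'\succeq E\}$ of all e-variables dominating $E$. This set is non-empty (it contains $E$), and I want to find a maximal element of $\Ee_\Hh$ inside it. Since $\X$ is finite, say $\X=\{x_1,\dots,x_k\}$, every function $\X\to\R$ is just a vector in $\R^k$, and the constraints ``$\phi\geq E$ pointwise'' and ``$\langle P,\phi\rangle\leq 1$ for all $P\in\Hh$'' are all closed (the latter being an intersection of closed half-spaces, one for each $P$). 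So $M_E$ is a closed subset of $\R^k$.

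Next I would argue $M_E$ is bounded, hence compact. Boundedness in each coordinate: by hypothesis, for every $x_i\in\X$ there is $P_i\in\Hh$ with $P_i(\{x_i\})=c_i>0$; then for any $\phi\in\Ee_\Hh$, non-negativity of $\phi$ gives $c_i\,\phi(x_i)\leq\langle P_i,\phi\rangle\leq 1$, so $0\leq\phi(x_i)\leq 1/c_i$. Thus $\Ee_\Hh$ itself — and a fortiori $M_E$ — is contained in the box $\prod_i[0,1/c_i]$, and being also closed, $M_E$ is compact. Now consider maximising the linear functional $\phi\mapsto\sum_{i=1}^k\phi(x_i)$ over the compact set $M_E$; a maximiser $E'$ exists. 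I claim $E'$ is maximal in $\Ee_\Hh$: if some $E''\in\Ee_\Hh$ had $E''\succ E'$, then $E''\succeq E'\succeq E$, so $E''\in M_E$, but $E''\succ E'$ forces $\sum_i E''(x_i)>\sum_i E'(x_i)$ (strict increase in at least one coordinate, no decrease in the others), contradicting maximality of $E'$. Hence $E'\in M_E$ is a maximal e-variable with $E'\succeq E$.

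Assembling this, the set of all maximal e-variables is a majorising e-class (every $E$ is dominated by the maximal $E'$ just constructed), and it consists entirely of maximal e-variables by definition; \Cref{lemma:maxopt} then gives that it is the optimal e-class, which in particular exists.

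\textbf{Main obstacle.} The only real point requiring care is the compactness of $M_E$ — specifically, extracting the coordinatewise bound $\phi(x)\leq 1/\sup_{P\in\Hh}P(\{x\})$ and noticing that the hypothesis $\sup_{P\in\Hh}P(\{x\})>0$ for \emph{every} $x$ is exactly what makes this bound finite in every coordinate (this is also why \Cref{cor:nonex} shows the hypothesis cannot be dropped). Everything else — closedness of the constraint set, existence of a linear-functional maximiser on a compact set, and the contradiction argument — is routine in finite dimension.
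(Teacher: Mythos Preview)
Your proof is correct and follows essentially the same approach as the paper: identify $\Ee_\Hh$ with a subset of $\R^d$, show the majorant set $M_E$ is compact (closed by intersecting half-spaces, bounded via $\phi(x_i)\leq 1/c_i$ from the hypothesis), and extract a maximal element. The only difference is in the extraction step: the paper maximises coordinates one at a time ($E_i\in\argmax_{E\in\Ee_{i-1}}E(x_i)$, iterating $i=1,\dots,d$), whereas you maximise the single strictly-monotone functional $\phi\mapsto\sum_i\phi(x_i)$ in one shot --- your version is a bit cleaner and avoids the $d$-step induction.
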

\begin{proof}
    Let $d<+\infty$ be the cardinality of $\X$, and write $\X = \{x_1,\dots,x_d\}$. Any $E\in\Ee_\Hh$ and $P\in\Hh$ can be seen as elements of $\R^d$, with $\langle P, E\rangle$ being the standard dot product. For the rest of this proof we will always consider the standard Euclidean topology on $\R^d$ to define continuity, closedness and compactness. First, note that $\Ee_\Hh = \bigcap_{P\in\Hh}\{E\succeq 0\,:\,\langle P, E\rangle \leq 1\}$ is closed, as the intersection of closed sets. Moreover, it is also bounded, and thus compact. Indeed, for any $x\in\X$ and $E\in\Ee_\Hh$, we have $E(x)\sup_{P\in\Hh}P(\{x\})\leq\sup_{P\in\Hh}\langle P, E\rangle \leq 1$ and $\sup_{P\in\Hh}P(\{x\})>0$.

    By \Cref{lemma:maxopt}, it is enough to show that the set of all maximal e-variables is a majorising e-class, or equivalently that every e-variable is majorised by a maximal e-variable. So, fix $E_0\in\Ee_\Hh$. The set $\Ee_0 = \{E\in\Ee_\Hh\,:\,E\succeq E_0\}\subseteq\Ee_\Hh$ is closed, hence compact. Moreover, the mapping $E\mapsto E(x_1)$ is continuous on $\Ee_\Hh$. We can thus find $E_1 \in \argmax_{E\in\Ee_0}E(x_1)$. We iterate this procedure up to $i=d$, with $\Ee_i = \{E\in\Ee_\Hh\,:\,E\succeq E_i\}$ and $E_i \in \argmax_{E\in\Ee_{i-1}}E(x_i)$. By construction, $E_d\succeq E_0$. Consider any e-variable $E^\star\succeq E_d$ and fix any integer index $i\in [1,d]$. Then, $E^\star\succeq E_d\succeq E_{i-1}$, which by the definition of $E_i$ implies that $E^\star(x_i)\leq E_i(x_i)\leq E_d(x_i)$, and so $E^\star(x_i)=E_d(x_i)$. Thus, $E^\star=E_d$ and $E_d$ is maximal.
\end{proof}

Although we will not make use of it, we state next an extension of the lemma above, for countable $\X$. A proof, via the construction of a transfinite sequence, is detailed in \Cref{app:proofs}.

\begin{lemma}\label{lemma:countopt}
    If $\X$ is countable and $\sup_{P\in\Hh}P(\{x\})>0$ for all $x\in\X$, the optimal e-class exists.
\end{lemma}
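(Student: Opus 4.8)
The plan is to apply \Cref{lemma:maxopt}: it suffices to prove that every $E_0\in\Ee_\Hh$ is majorised by some maximal e-variable, for then the set of all maximal e-variables is a majorising e-class and hence the optimal one. Two elementary facts about the countable setting will be used throughout. Since $\X$ is countable, every function $\X\to\R$ is Borel and every $P\in\PR_\X$ is purely atomic, so $\langle P,\phi\rangle=\sum_{x\in\X}\phi(x)\,P(\{x\})$ for non-negative $\phi$. Moreover, exactly as in the proof of \Cref{lemma:finopt}, the assumption $\sup_{P\in\Hh}P(\{x\})>0$ yields for each $x$ a finite constant $c_x:=\big(\sup_{P\in\Hh}P(\{x\})\big)^{-1}$ with $E(x)\le c_x$ for all $E\in\Ee_\Hh$. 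If $\X$ is finite we are done by \Cref{lemma:finopt}, so assume $\X=\{x_1,x_2,\dots\}$ is countably infinite.

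Fix $E_0$. I would build, by transfinite recursion, a pointwise non-decreasing ordinal-indexed family $(E_\alpha)_\alpha$ in $\Ee_\Hh$ with $E_\alpha\succeq E_0$ for all $\alpha$: at a successor ordinal put $E_{\alpha+1}=E_\alpha$ if $E_\alpha$ is maximal, and otherwise choose (axiom of choice) some $E_{\alpha+1}\in\Ee_\Hh$ with $E_{\alpha+1}\succ E_\alpha$; at a limit ordinal $\lambda$ set $E_\lambda(x):=\sup_{\alpha<\lambda}E_\alpha(x)$ for every $x\in\X$. (This is really Zorn's lemma applied to $\{E\in\Ee_\Hh:E\succeq E_0\}$ ordered by $\succeq$, with pointwise suprema as chain upper bounds; I find the recursive phrasing cleaner.)

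The crux — and the only step I expect to require genuine care — is checking that the limit stage stays inside $\Ee_\Hh$. Finiteness is immediate from the uniform bound: $E_\lambda(x)\le c_x<\infty$. For the e-variable inequality, fix $P\in\Hh$ and $\ee>0$; for any finite $F\subseteq\X$, pick for each $x\in F$ an ordinal $\alpha_x<\lambda$ with $E_{\alpha_x}(x)\ge E_\lambda(x)-\ee$, and note that since $\{E_\alpha:\alpha<\lambda\}$ is totally ordered by $\succeq$ and $F$ is finite, there is a single $\beta<\lambda$ with $E_\beta\succeq E_{\alpha_x}$ for all $x\in F$, so that $\sum_{x\in F}E_\lambda(x)P(\{x\})\le\sum_{x\in F}E_\beta(x)P(\{x\})+\ee\le\langle P,E_\beta\rangle+\ee\le 1+\ee$. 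Taking the supremum over finite $F$ and letting $\ee\downarrow0$ gives $\langle P,E_\lambda\rangle\le1$, so $E_\lambda\in\Ee_\Hh$, and $E_\lambda\succeq E_\alpha\succeq E_0$ by construction. This is precisely where the countability of $\X$ (atomicity of $P$ and automatic measurability) and the positivity assumption (finiteness of $E_\lambda$) are used, together with the finite-truncation trick.

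It remains to see that the construction stabilises. Choose weights $w_i>0$ with $\sum_i w_i c_{x_i}<\infty$ (for instance $w_i=2^{-i}(1+c_{x_i})^{-1}$) and set $f(E):=\sum_i w_i E(x_i)$, a functional on $\Ee_\Hh$ bounded by $\sum_i w_i c_{x_i}$ and strictly increasing for $\succeq$ (if $E\prec E'$ then $E'(x_i)>E(x_i)$ for some $i$, whence $f(E')>f(E)$). Along our family $f(E_\alpha)$ is non-decreasing, and strictly increasing at every successor step at which $E_\alpha$ is not maximal. Were $E_\alpha$ maximal for no $\alpha$, then $(f(E_\alpha))_{\alpha<\omega_1}$ would be a strictly increasing $\omega_1$-sequence contained in a bounded interval, producing an uncountable well-ordered subset of $\R$ — impossible. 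Hence $E_{\alpha^\star}$ is maximal for some $\alpha^\star$, and $E_{\alpha^\star}\succeq E_0$ is the desired maximal majoriser; \Cref{lemma:maxopt} then yields the claim.
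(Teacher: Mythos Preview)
Your proof is correct and follows essentially the same transfinite-recursion strategy as the paper's: build an increasing ordinal-indexed chain above $E_0$, verify that pointwise suprema at limit stages remain e-variables (the paper invokes Beppo Levi along a cofinal $\omega$-sequence, you argue directly via finite truncations), and terminate by a cardinality obstruction. The only noteworthy difference is the termination device --- the paper injects the jump-set into $\X\times\mathbb{Q}$ by picking an intermediate rational at each strict increase, whereas you use a strictly $\succeq$-monotone weighted functional $f$ into a bounded real interval --- but both encode the same fact that $\R$ admits no strictly increasing $\omega_1$-sequence.
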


One might think that if $\sup_{P\in\Hh}P(\{x\})>0$ for every $x\in\X$ the optimal e-class always exists. However, this is not always the case, as shown by the next result (whose proof is deferred to \Cref{app:proofs}). Remarkably, this shows that allowing the e-variables to take the value $+\infty$ would not be enough to guarantee the existence of the optimal e-class.

\begin{proposition}\label{prop:nonex}
    Let $\X=[0,1]$. Consider the hypothesis
    $$\Hh = \big\{P\in\PR_{[0,1]}\,:\,P(\{0\})\geq 1/2\big\}\cup\{U_{[0,1]}\}\,,$$
    with $U_{[0,1]}$ the uniform distribution on $[0,1]$. $\Hh$ does not admit an optimal e-class.
\end{proposition}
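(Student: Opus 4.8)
The plan is to use the characterisation in \Cref{lemma:maxopt}: an optimal e-class exists if and only if the set of all maximal e-variables is majorising, i.e.\ if and only if every $E\in\Ee_\Hh$ admits a maximal majoriser. So it suffices to exhibit one e-variable for $\Hh$ having no maximal majoriser. My candidate is
\[
E_0 \;=\; 2\,\one{[1/2,1]}\,.
\]
That $E_0\in\Ee_\Hh$ is immediate: any $P$ with $P(\{0\})\ge\tfrac12$ has $P([1/2,1])\le\tfrac12$, so $\langle P,E_0\rangle=2P([1/2,1])\le1$, and $\langle U_{[0,1]},E_0\rangle=2\cdot\tfrac12=1$. (One can check that $\Ee_\Hh$ consists exactly of the bounded nonnegative Borel $E$ with $E(0)+\sup_{[0,1]}E\le2$ and $\int_0^1E\le1$, but the argument below only uses a handful of explicit test measures.)

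The core step is to prove that \emph{no} $E\in\Ee_\Hh$ with $E\succeq E_0$ is maximal. Fix such an $E$; then $E\ge2$ on $[1/2,1]$. Evaluating the defining inequality $\langle P,E\rangle\le1$ at the two-point measures $\tfrac12\delta_0+\tfrac12\delta_x\in\Hh$ yields $E(0)+E(x)\le2$ for all $x\in[0,1]$: taking $x\in[1/2,1]$ and using $E(x)\ge2$ forces $E(0)=0$, and taking $x$ arbitrary forces $E\le2$ everywhere. Then $1\ge\langle U_{[0,1]},E\rangle=\int_0^1E\ge\int_{1/2}^1E\ge 2\cdot\tfrac12=1$, so these inequalities are all equalities; in particular $\int_0^{1/2}E=0$, hence $E=0$ Lebesgue-a.e.\ on $[0,1/2]$, and we may pick $x_0\in(0,1/2)$ with $E(x_0)=0$. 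Now set $E'=E+2\,\one{\{x_0\}}$, so that $E'\succ E$. I claim $E'\in\Ee_\Hh$: indeed $E'(0)=0$, $E'\le2$ everywhere (since $E'(x_0)=2$), the Lebesgue integral is unchanged so $\langle U_{[0,1]},E'\rangle=\int_0^1E\le1$, and for any $P$ with $P(\{0\})\ge\tfrac12$,
\[
\langle P,E'\rangle \;\le\; 0\cdot P(\{0\}) + 2\,P\bigl((0,1]\bigr) \;=\; 2\bigl(1-P(\{0\})\bigr)\;\le\;1\,.
\]
Hence $E'\in\Ee_\Hh$ strictly majorises $E$, and $E$ is not maximal. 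This shows $E_0$ has no maximal majoriser, so by \Cref{lemma:maxopt} no optimal e-class exists.

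I do not expect a serious technical obstacle; the proof is essentially a few lines. The one point worth getting right — and the conceptual reason the phenomenon occurs — is the mismatch between the \emph{pointwise} constraints $\Hh$ imposes via Dirac-supported measures (which already pin down $E(0)$ and $\sup E$, and which a single-point perturbation never violates, since it leaves the $U_{[0,1]}$-integral untouched) and the \emph{integral} constraint from $U_{[0,1]}$, which is saturated only on average and therefore always leaves room to increase $E$ on a Lebesgue-null set. The only computation needing a little care is verifying that the perturbation $E'$ stays in $\Ee_\Hh$ against measures $P$ that themselves place mass on the perturbed point $x_0$.
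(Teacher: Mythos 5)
Your proof is correct and rests on the same mechanism as the paper's: the constraints from the measures $\tfrac12\delta_0+\tfrac12\delta_x$ pin down $E(0)=0$ and $E\preceq 2$, while the only non-atomic constraint, from $U_{[0,1]}$, is blind to a single-point bump, so no majoriser of your witness can be maximal. The paper works with the class $\{E\in\Ee_\Hh : E(1)=2\}$ instead of the single witness $2\,\one{[1/2,1]}$, but this is only a cosmetic difference in how the non-majorised e-variables are exhibited.
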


\section{Properly constrained hypotheses}\label{sec:PCH}

In the remainder of this paper, we will mainly focus on a specific kind of hypotheses, for which we aim to fully characterise the optimal e-class. Notably, \cite{clerico2024optimality} examined the simple case where $\X = [0,1]$ and $\Hh$ is the set of probability measures on $\X$ with a fixed mean. More explicitly, for $\mu\in(0,1)$, define $$\Hh_\mu = \{P\in\PR_{[0,1]}\,:\,\langle P,X\rangle = \mu\}$$. Testing for $\Hh_\mu$ means that we are testing if the mean of a bounded random variable is $\mu$. \cite{clerico2024optimality} showed that the optimal e-class exists and is presisely
$$\Ee_\mu = \big\{x\mapsto 1+\lambda(x-\mu)\,:\,\lambda\in[1/(\mu-1), 1/\mu]\big\}$$ (see \Cref{sec:meanbounded} for more details). 

We remark that the hypothesis $\Hh_\mu$ is an example of a \textit{constrained} hypothesis, namely a hypothesis that can be defined via the linear constraint $\langle P, X-\mu\rangle =0$. Here, we discuss the existence and characterisation of the optimal e-class for hypothesis of a similar form. We consider the general setting where $\X$ is a generic closed set in $\R^n$ and the hypothesis $\Hh$ is defined in terms of the expectation of a continuous function $\Phi: \X \to \R^m$. As an application (see \Cref{sec:heavy}) we will for instance be able to characterise the optimal e-class for testing the mean of heavy tail random variables. Since the approach in \cite{clerico2024optimality} is specifically designed for their simpler setting, we will have to develop more sophisticated proof techniques to handle the broader framework that we consider in this work.

In the following, fix a closed set $\X\subseteq\R^n$. We recall that, for $\Phi:\X\to\R^m$, $\PR_\Phi$ is the set of measures $P$ in $\PR_\X$ such that $\langle P, \|\Phi\|_1\rangle<+\infty$.

\begin{definition}[Constraints]
    Let $\Hh$ be a hypothesis on $\X$. If a continuous $\Phi:\X\to\R^m$ satisfies $$\Hh = \{P\in\PR_\Phi\;:\;\langle P, \Phi\rangle=0\}\,,$$ we call $\Phi$ a \emph{constraint} for $\Hh$. Let $\Cc$ denote the convex hull of $\Phi(\X)$. If $0\in\relint\Cc$ we say that $\Phi$ is a \emph{proper} constraint. If moreover $0\in\inter\Cc$, then we call $\Phi$ a \emph{minimal} constraint.
\end{definition}

\begin{definition}[Finitely and properly constrained hypotheses]
    A hypothesis is called \emph{finitely constrained} if it admits a constraint. It is \emph{properly constrained} if it admits a proper constraint.
\end{definition}
Next, we comment on a few properties of finitely and properly constrained hypotheses. All proofs (often short and building on elementary results from real algebra and convex analysis) are omitted here, and deferred to \Cref{app:proofs}.

Any finitely constrained $\Hh$ is convex, as $\PR_\Phi$ is convex and $\Hh$ is a level set of the convex functional $P\mapsto\langle P, \Phi\rangle$ on $\PR_\Phi$. 

The reason for focusing on properly constrained hypotheses is that if $0$ is on the (relative) boundary of $\Cc$, then one can show that all $P\in\Hh$ need to be supported on the relative boundary (see \Cref{lemma:convP}). In particular, \Cref{cor:nonex} states that in such case the optimal e-class does not exist. However, in such cases one can restrict $\X$ to a smaller set, and obtain a proper constraint from a non-proper one (this will be later formalised in \Cref{lemma:X0}). On the other hand, if a hypothesis is properly constrained, it is reach enough to put mass on every $x$. This is ensured by the next lemma (proved in \Cref{app:proofs}). 
\begin{lemma}\label{lemma:finicovpro}
    Let $\Hh$ be a finitely constrained hypothesis on $\X$. $\Hh$ is properly constrained if, and only if, $\sup_{P\in\Hh}P(\{x\})>0$ for all $x\in\X$. Moreover, if $\Hh$ is properly constrained, every constraint $\Phi$ for $\Hh$ is a proper constraint and there is $A:\X\to[1, +\infty)$ such that $E\preceq A$ for any $E\in\Ee_\Hh$.
\end{lemma}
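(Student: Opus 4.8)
# Proof Plan for Lemma~\ref{lemma:finicovpro}

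The plan is to split the proof into the three claimed statements and handle them in sequence, since the later claims build on the earlier ones.

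\emph{Claim 1: $\Hh$ properly constrained $\iff$ $\sup_{P\in\Hh}P(\{x\})>0$ for all $x\in\X$.} For the forward direction, let $\Phi$ be a proper constraint, so $0\in\relint\Cc$ where $\Cc=\conv\Phi(\X)$. Fix $x\in\X$; I want a measure $P\in\Hh$ with $P(\{x\})>0$. Since $0\in\relint\Cc$ and $\Phi(x)\in\Cc$, a standard relative-interior fact (see the cited appendix on convex analysis) gives that one can ``overshoot'' slightly past $0$ in the direction from $\Phi(x)$: there exists $t>0$ such that $-t\,\Phi(x)\in\Cc$. Hence $-t\,\Phi(x)$ is a finite convex combination $\sum_j \alpha_j \Phi(x_j)$ of points in $\Phi(\X)$ (Carath\'eodory). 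Then the discrete measure $P = \frac{1}{1+t}\,\delta_x + \frac{t}{1+t}\sum_j \alpha_j \delta_{x_j}$ lies in $\PR_\Phi$ (finitely supported), satisfies $\langle P,\Phi\rangle = \frac{1}{1+t}\Phi(x) + \frac{t}{1+t}(-t^{-1}\cdot t\,\Phi(x))$ — I must be careful with the normalisation here, but the point is that the coefficients can be chosen so that $\langle P,\Phi\rangle = 0$, i.e.\ $P\in\Hh$, and clearly $P(\{x\})\geq\frac{1}{1+t}>0$. For the reverse direction, suppose $\Hh$ is finitely constrained by some $\Phi$ but not properly, so $0\in\Cc\setminus\relint\Cc$, i.e.\ $0$ is on the relative boundary of $\Cc$. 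Then there is a supporting hyperplane: a nonzero $\lambda\in\R^m$ with $\lambda\cdot c\geq 0$ for all $c\in\Cc$ and $\lambda\cdot c_0>0$ for some $c_0\in\Cc$. This means $\lambda\cdot\Phi(x)\geq 0$ for all $x\in\X$, with strict inequality on a nonempty relatively open subset $S=\{x:\lambda\cdot\Phi(x)>0\}$ (open in $\X$ by continuity of $\Phi$; nonempty because $c_0$ is a convex combination of images of points, at least one of which must have $\lambda\cdot\Phi>0$). For any $P\in\Hh$, $\langle P,\lambda\cdot\Phi\rangle = \lambda\cdot\langle P,\Phi\rangle = 0$ with a nonnegative integrand forces $P(S)=0$. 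Taking any $x_0\in S$ gives $\sup_{P\in\Hh}P(\{x_0\})\leq\sup_{P\in\Hh}P(S)=0$, contradicting the hypothesis. (This argument should essentially be the content of the asserted \Cref{lemma:convP}, which I would invoke if it is proved first; otherwise I include it.)

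\emph{Claim 2: if $\Hh$ is properly constrained, every constraint is proper.} Suppose $\Phi$ is any constraint for $\Hh$ and $\Psi$ is a proper one. By Claim~1 (applied using $\Psi$), $\sup_{P\in\Hh}P(\{x\})>0$ for all $x\in\X$. Now run the reverse direction of Claim~1's argument with $\Phi$: if $\Phi$ were not proper we derived a point $x_0\in\X$ with $\sup_{P\in\Hh}P(\{x_0\})=0$, a contradiction. Hence $\Phi$ is proper.

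\emph{Claim 3: there is $A:\X\to[1,+\infty)$ with $E\preceq A$ for every $E\in\Ee_\Hh$.} The natural choice is $A(x) = 1/\sup_{P\in\Hh}P(\{x\})$, which by Claim~1 is a well-defined function into $(0,+\infty)$; it lands in $[1,+\infty)$ since $P(\{x\})\leq 1$ always. For any e-variable $E$ and any $P\in\Hh$, $E(x)\,P(\{x\})\leq\langle P,E\rangle\leq 1$ (as in the proof of \Cref{lemma:finopt}), so $E(x)\leq 1/P(\{x\})$; taking the supremum over $P\in\Hh$ yields $E(x)\leq A(x)$. The only subtlety is whether $A$ can be taken finite everywhere — which is exactly Claim~1 — and the value $1$ lower bound, which is immediate. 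Measurability of $A$ is not required by the statement, so I would not dwell on it.

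\emph{Main obstacle.} The delicate point is the bookkeeping in the forward direction of Claim~1: producing, from $0\in\relint\Cc$, an explicit finitely supported measure in $\Hh$ charging a prescribed point $x$. The clean way is: since $0\in\relint\Cc$, there is $\rho>0$ with $B(0,\rho)\cap\aff\Cc\subseteq\Cc$; if $\Phi(x)=0$ one can perturb, and if $\Phi(x)\neq 0$ then $-\rho\,\Phi(x)/\|\Phi(x)\|\in\Cc$, write it via Carath\'eodory, and then the two-point-family convex combination balancing $\Phi(x)$ against this vector gives a member of $\Hh$ with positive mass at $x$. Everything else is routine convex analysis (supporting hyperplanes, relative interior) plus the elementary Markov-type bound $E(x)P(\{x\})\leq 1$ already used in the paper.
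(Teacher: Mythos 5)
Your proposal is correct and follows essentially the same route as the paper: your forward direction is exactly the overshoot-plus-Carath\'eodory construction that the paper packages as \Cref{cor:Yfinsupp} and \Cref{lemma:prored}, your reverse direction is the supporting-hyperplane-at-$0$ argument underlying \Cref{lemma:convP} (your choice of witness point with $\lambda\cdot\Phi(x_0)>0$ is a sound, slightly more direct variant of the paper's step), and your bound $A(x)=1/\sup_{P\in\Hh}P(\{x\})$ is identical to the paper's. The two small slips you flagged yourself (the weights should be $\tfrac{t}{1+t}$ on $\delta_x$ and $\tfrac{1}{1+t}$ on the Carath\'eodory part, and the case $\Phi(x)=0$ is handled by taking $P=\delta_x$) are routine and do not affect the argument.
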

Not only we can find a measure in $\Hh$ that put mass on every $x$ if $\Hh$ is properly constrained, we can also ask for such measure to have finite support, a property that will turn out to be useful for our analysis.
\begin{lemma}\label{lemma:prored}
    Let $\Hh$ be a properly constrained hypothesis on $\X$. Then, for every $x\in\X$ there exists a $P\in\Hh$ whose support has finite cardinality and such that $x\in\supp P$.
\end{lemma}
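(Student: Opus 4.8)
Proof proposal. The plan is to use a proper constraint $\Phi:\X\to\R^m$ for $\Hh$ — whose defining property is $0\in\relint\Cc$ with $\Cc=\conv\Phi(\X)$ — and to produce the required measure as an explicit finite convex combination of Dirac masses via Carathéodory's theorem. Fix $x_0\in\X$. If $\Phi(x_0)=0$, then $P=\delta_{x_0}$ already works: it has finite support with $x_0\in\supp P$, it lies in $\PR_\Phi$ trivially, and $\langle P,\Phi\rangle=\Phi(x_0)=0$, so $P\in\Hh$. Assume henceforth $\Phi(x_0)\neq 0$. The first real step is a ``reflection through the origin'': since $0\in\Cc$, the affine hull $\aff\Cc$ coincides with the linear subspace $\lin\Cc$, and $\Phi(x_0)\in\Cc\subseteq\lin\Cc$; because $0\in\relint\Cc$, some relative neighbourhood of $0$ in $\lin\Cc$ is contained in $\Cc$, so there exists $t>0$ with $-t\,\Phi(x_0)\in\Cc$.

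Next, since $-t\,\Phi(x_0)\in\Cc=\conv\Phi(\X)$, Carathéodory's theorem yields points $x_1,\dots,x_k\in\X$ (with $k\le m+1$) and weights $\alpha_1,\dots,\alpha_k>0$, $\sum_{i=1}^k\alpha_i=1$, such that $\sum_{i=1}^k\alpha_i\,\Phi(x_i)=-t\,\Phi(x_0)$, i.e.\ $\Phi(x_0)+\sum_{i=1}^k\tfrac{\alpha_i}{t}\,\Phi(x_i)=0$. Normalising the total mass $1+\tfrac1t=\tfrac{t+1}{t}$, I would set
\[
P \;=\; \frac{t}{t+1}\,\delta_{x_0} \;+\; \sum_{i=1}^k\frac{\alpha_i}{t+1}\,\delta_{x_i}\,.
\]
Then $P$ is a probability measure: its total mass is $\tfrac{t}{t+1}+\tfrac{1}{t+1}\sum_i\alpha_i=1$. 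It has finite support, so $\langle P,\|\Phi\|_1\rangle<+\infty$ and $P\in\PR_\Phi$. Moreover $\langle P,\Phi\rangle=\tfrac{t}{t+1}\Phi(x_0)+\tfrac{1}{t+1}\sum_i\alpha_i\Phi(x_i)=\tfrac{t}{t+1}\Phi(x_0)-\tfrac{t}{t+1}\Phi(x_0)=0$, hence $P\in\Hh$. Finally the coefficient $\tfrac{t}{t+1}$ of $\delta_{x_0}$ is strictly positive, so $x_0\in\supp P$; if $x_0$ happens to equal some $x_i$, the weights merely merge and $x_0$ still carries positive mass.

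Main obstacle. The argument is largely routine convex geometry, so there is no deep difficulty; the one point demanding care is the reflection step, which is exactly where properness of the constraint ($0\in\relint\Cc$, rather than merely $0\in\Cc$) is used — for a non-proper constraint no such $t>0$ need exist, consistent with the fact (via \Cref{cor:nonex} and \Cref{lemma:convP}) that the conclusion then fails. A secondary bookkeeping concern is simply to ensure the weight on $\delta_{x_0}$ stays strictly positive, which the explicit normalisation above guarantees.
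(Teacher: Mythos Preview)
Your proof is correct and follows essentially the same route as the paper: the paper factors the argument through an auxiliary corollary (\Cref{cor:Yfinsupp}) applied in the image $\Y=\Phi(\X)$ with $y_0=0$, but that corollary's proof is exactly your reflection-through-the-origin plus Carathéodory construction, yielding the same convex combination $\tfrac{t}{t+1}\delta_{x_0}+\tfrac{1}{t+1}P'$.
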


In general, proper constraints might be redundant, in the sense that their components might be linear dependent. When this is not the case, they are minimal constraints. This linear independence of the components of minimal constraints will be a key tool for our results, as it will often allow to reduce the problem to analysis of finite-dimensional spaces.
\begin{lemma}\label{lemma:prominiff}
    Let $\Hh$ be a properly constrained hypothesis and $\Phi$ a proper constraint. $\Phi$ is minimal if, and only if, all its components are linearly independent scalar functions.
\end{lemma}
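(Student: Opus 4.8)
The statement is an "if and only if": $\Phi$ is minimal (i.e.\ $0 \in \inter \Cc$) precisely when the components $\Phi_1,\dots,\Phi_m$ are linearly independent as scalar functions on $\X$. I would prove the contrapositive in the "only if" direction and a direct argument in the "if" direction, but in fact it is cleaner to prove both directions together by analysing what linear dependence of the components means geometrically for $\Cc = \conv \Phi(\X)$.

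Key steps. First, suppose the components are linearly dependent, say $\lambda \cdot \Phi \equiv 0$ on $\X$ for some $\lambda \in \R^m \setminus \{0\}$. Then every point of $\Phi(\X)$ lies in the hyperplane $\{v \in \R^m : \lambda \cdot v = 0\}$, hence so does $\Cc$, being the convex hull. A set contained in a proper affine subspace of $\R^m$ has empty interior in $\R^m$; in particular $0 \notin \inter \Cc$, so $\Phi$ is not minimal. This handles "minimal $\Rightarrow$ linearly independent" (contrapositive). Second, suppose the components are linearly independent. We already know (since $\Phi$ is a proper constraint) that $0 \in \relint \Cc$. It therefore suffices to show $\relint \Cc = \inter \Cc$, which holds as soon as $\aff \Cc = \R^m$, i.e.\ $\Cc$ is full-dimensional. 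Suppose not: then $\Cc$, and hence $\Phi(\X)$, is contained in some proper affine subspace $H = \{v : \lambda \cdot v = c\}$ with $\lambda \neq 0$. Since $0 \in \relint \Cc \subseteq H$, we get $c = 0$, so $\lambda \cdot \Phi(x) = 0$ for all $x \in \X$, i.e.\ $\lambda \cdot \Phi \equiv 0$ on $\X$ — contradicting linear independence of the components. Hence $\aff \Cc = \R^m$, so $\relint \Cc = \inter \Cc \ni 0$, and $\Phi$ is minimal.

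I would cite the elementary convex-analysis facts from Appendix \ref{app:convex} that I use: (i) $\conv$ of a subset of an affine subspace stays in that subspace; (ii) $\aff \Cc = \R^m$ if and only if $\relint \Cc = \inter \Cc$; equivalently, $\relint$ of a convex set $\Cc$ is taken relative to $\aff \Cc$, and equals the interior exactly when $\Cc$ is full-dimensional. These are all standard. I do not expect any real obstacle here: the only mild subtlety is making sure the argument is symmetric, i.e.\ that "linearly dependent components" is equivalent to "$\Phi(\X)$ lies in a proper \emph{linear} subspace" rather than merely a proper affine one — but the properness of $\Phi$ (which forces $0 \in \Cc$, hence $0$ lies in any affine subspace containing $\Phi(\X)$, hence that subspace is linear) is exactly what reconciles the two, and that is the one place where the hypothesis "$\Phi$ is a proper constraint" is genuinely used.
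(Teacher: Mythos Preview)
Your proof is correct and follows essentially the same approach as the paper: both arguments establish that linear independence of the components of $\Phi$ is equivalent to $\aff\Phi(\X)=\R^m$, and then use that $\relint\Cc=\inter\Cc$ precisely when $\Cc$ is full-dimensional. The only difference is that the paper packages the equivalence ``linear independence of the $\Phi_i$ $\Leftrightarrow$ $\dim(\aff\Phi(\X))=m$'' into a separate technical lemma (\Cref{lemma:dimaff}), whereas you prove it inline via the direct hyperplane argument; your observation that $0\in\Cc$ forces any containing affine subspace to be linear is exactly the content used in that lemma.
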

The above results suggest that, if we have a proper constraint, it can be transformed in a minimal constraint by eliminating some of its components. In particular, we expect that we can construct a minimal constraint for every properly constrained hypothesis. This is indeed the case.
\begin{lemma}\label{lemma:prominc}
    Every properly constrained hypothesis admits a minimal constraint.
\end{lemma}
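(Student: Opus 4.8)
\textbf{Proof proposal for Lemma \ref{lemma:prominc}.}

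The plan is to start from an arbitrary proper constraint $\Phi:\X\to\R^m$ (which exists by definition of a properly constrained hypothesis) and to extract a sub-collection of its components that is linearly independent while still defining the same hypothesis $\Hh$. Concretely, consider the span $\Span\Phi$ inside the vector space of real functions on $\X$; pick a maximal linearly independent subset of $\{\Phi_1,\dots,\Phi_m\}$, say $\{\Phi_{i_1},\dots,\Phi_{i_k}\}$, and let $\Psi:\X\to\R^k$ be the continuous function with these components. By construction $\Span\Psi=\Span\Phi$, so every $\Phi_j$ is a linear combination of the $\Phi_{i_\ell}$'s, say $\Phi_j=\sum_\ell c_{j\ell}\Phi_{i_\ell}$, and hence for any $P\in\PR_\X$ one has $\langle P,\|\Phi\|_1\rangle<+\infty$ if and only if $\langle P,\|\Psi\|_1\rangle<+\infty$ (each side bounds a constant multiple of the other, up to finitely many terms), giving $\PR_\Phi=\PR_\Psi$. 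On this common domain, $\langle P,\Phi\rangle=0$ iff $\langle P,\Psi\rangle=0$: the forward direction is immediate since the components of $\Psi$ are among those of $\Phi$, and the reverse follows from $\langle P,\Phi_j\rangle=\sum_\ell c_{j\ell}\langle P,\Phi_{i_\ell}\rangle$. Therefore $\Psi$ is also a constraint for $\Hh$.

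Next I would argue that $\Psi$ is a \emph{proper} constraint. Here it is cleanest to invoke Lemma \ref{lemma:finicovpro}: since $\Hh$ is properly constrained and $\Psi$ is a constraint for $\Hh$, that lemma already tells us $\Psi$ is automatically a proper constraint. (Alternatively, one can give a direct convex-geometry argument: $\Hh$ is nonempty and, by Lemma \ref{lemma:finicovpro}, $\sup_{P\in\Hh}P(\{x\})>0$ for every $x\in\X$, so the convex hull of $\Psi(\X)$ contains $0$ in its relative interior — but citing Lemma \ref{lemma:finicovpro} is the shortest route.) Finally, $\Psi$ has linearly independent components by construction, so Lemma \ref{lemma:prominiff} upgrades ``proper'' to ``minimal'': $\Psi$ is a minimal constraint for $\Hh$, which is what we wanted.

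The only place that requires a little care — and the step I expect to be the main (mild) obstacle — is verifying that dropping the linearly dependent components genuinely preserves both the finiteness domain $\PR_\Phi$ and the zero-expectation condition; i.e.\ checking $\PR_\Phi=\PR_\Psi$ and $\{\langle P,\Phi\rangle=0\}=\{\langle P,\Psi\rangle=0\}$ on that domain. This is routine once one writes each discarded $\Phi_j$ explicitly as a finite linear combination of the retained components and uses linearity of the expectation together with the triangle inequality for $\|\cdot\|_1$, but it is the substantive bookkeeping of the argument; everything else is a direct appeal to the three preceding lemmas. The whole proof is short, so in the paper I would simply spell out the linear-combination identity, note $\PR_\Phi=\PR_\Psi$, and then chain Lemma \ref{lemma:finicovpro} and Lemma \ref{lemma:prominiff}.
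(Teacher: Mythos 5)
Your proof is correct, but it follows a genuinely different route from the paper's. You keep a maximal linearly independent subfamily of the components of the given proper constraint $\Phi$, check that the resulting $\Psi$ still defines the same hypothesis (the integrability domains coincide because each discarded component is a fixed linear combination of the retained ones, and the zero-expectation conditions coincide by linearity), and then conclude by chaining \Cref{lemma:finicovpro} (every constraint of a properly constrained hypothesis is proper) with \Cref{lemma:prominiff} (proper plus linearly independent components implies minimal). The paper instead works in the target space: it orthogonally projects onto $U=\aff\Phi(\X)$, which is a linear subspace containing $0$, notes that this projection fixes $\Phi$ pointwise on $\X$ so the hypothesis is unchanged, and then composes with a linear isomorphism $U\cong\R^{m'}$, observing directly that $0$ lies in the interior of the convex hull of the image, so no appeal to \Cref{lemma:prominiff} is needed. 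Your selection-of-components argument is more elementary and lets the characterisation lemma do the geometric work, at the cost of the (routine but real) bookkeeping that $\PR_\Phi=\PR_\Psi$ and that the zero-mean sets agree; the paper's change-of-coordinates argument avoids that bookkeeping almost entirely, since $\Phi_U=\Phi$ on $\X$, and is self-contained modulo the norm-equivalence under the isomorphism $I$, which it glosses much as you gloss your triangle-inequality step. Both constructions produce a minimal constraint with the same number of components, namely $\dim\Span\Phi=\dim\aff\Phi(\X)$ (equal by \Cref{lemma:dimaff}), so the outcomes agree.
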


Last, we conclude with a characterisation of properly constrained hypotheses.
\begin{lemma}\label{lemma:profull}
    Let $\Hh$ be a finitely constrained hypothesis on $\X$. $\Hh$ is properly constrained if, and only if, there is $P\in\Hh$ such that $\supp P=\X$. 
\end{lemma}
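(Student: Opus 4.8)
The plan is to prove the two implications separately, leaning on the characterisation of proper constraints via the relative interior condition $0\in\relint\Cc$, where $\Cc=\conv\Phi(\X)$. The ``if'' direction is quick: if there exists $P\in\Hh$ with $\supp P=\X$, then in particular $\sup_{Q\in\Hh}Q(\{x\})\geq P(\{x\})>0$ for every $x\in\X$, so by \Cref{lemma:finicovpro} the hypothesis $\Hh$ is properly constrained. (Alternatively, one can argue directly: $\langle P,\Phi\rangle=0$ with $\supp P=\X$ forces $0$ to lie in $\relint\conv\Phi(\X)$, but routing through \Cref{lemma:finicovpro} is cleanest.)

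For the ``only if'' direction, assume $\Hh$ is properly constrained and fix any constraint $\Phi$; by \Cref{lemma:finicovpro} it is automatically proper, so $0\in\relint\Cc$. The goal is to build a single $P\in\Hh$ with $\supp P=\X$. The case where $\X$ is finite is essentially \Cref{lemma:prored} (take $P$ supported on all of $\X$; since $\X$ is finite the finitely-supported measure from the lemma can be taken with full support by averaging the measures $P_x$ over $x\in\X$). So the real content is the case where $\X$ is infinite. Here I would exploit that $\X\subseteq\R^n$ is closed and hence second-countable and separable: pick a countable dense subset $\{x_k\}_{k\geq1}$ of $\X$. For each $k$, \Cref{lemma:prored} gives a finitely-supported $P_k\in\Hh$ with $x_k\in\supp P_k$. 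Now set
$$P = \sum_{k\geq1} 2^{-k} P_k\,.$$
This is a Borel probability measure on $\X$. It lies in $\PR_\Phi$ because $\langle P,\|\Phi\|_1\rangle = \sum_k 2^{-k}\langle P_k,\|\Phi\|_1\rangle$ and each $P_k$ has finite support so each term is finite — wait, this needs the tail to be summable, which requires a uniform bound; I would instead choose the $P_k$ more carefully, or truncate: since each $P_k$ is finitely supported, $\langle P_k,\|\Phi\|_1\rangle<\infty$ automatically, but summability of $\sum_k 2^{-k}\langle P_k,\|\Phi\|_1\rangle$ is not guaranteed unless we rescale the weights. The fix is to replace the weights $2^{-k}$ by $c_k>0$ with $\sum_k c_k=1$ and $\sum_k c_k\langle P_k,\|\Phi\|_1\rangle<\infty$, which is always possible (choose $c_k$ decaying fast enough relative to $1+\langle P_k,\|\Phi\|_1\rangle$). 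Then $\langle P,\Phi\rangle = \sum_k c_k\langle P_k,\Phi\rangle = 0$ by dominated convergence (the $\|\Phi\|_1$-integrability just established is the dominating function), so $P\in\Hh$. Finally $\supp P\supseteq \overline{\{x_k:k\geq1\}} = \X$ because every $x_k\in\supp P_k$ contributes positive mass and $\supp P$ is closed; since trivially $\supp P\subseteq\X$, we get $\supp P=\X$.

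The main obstacle is the integrability bookkeeping in the infinite case: one must ensure the convex combination of the finitely-supported witnessing measures still satisfies $\langle P,\|\Phi\|_1\rangle<\infty$ so that $\langle P,\Phi\rangle$ is well-defined and equals $0$ by a dominated/monotone convergence argument. This is handled, as sketched, by choosing the mixture weights to decay fast enough relative to the (finite, but possibly large) quantities $\langle P_k,\|\Phi\|_1\rangle$. Everything else — density and separability of closed subsets of $\R^n$, closedness of supports, and the fact that a countable mixture puts positive mass on a dense set hence has full support — is routine. I would present the argument in this order: dispose of the easy direction via \Cref{lemma:finicovpro}; reduce the hard direction to producing a full-support member of $\Hh$; handle finite $\X$ by averaging \Cref{lemma:prored}; handle infinite $\X$ by the weighted countable mixture over a dense sequence, with the weight choice made explicit to secure $\Phi$-integrability.
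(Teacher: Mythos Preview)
Your ``only if'' direction is essentially the paper's argument: pick a countable dense sequence, for each point pull a witnessing measure in $\Hh$ (the paper uses \Cref{lemma:finicovpro}, you use \Cref{lemma:prored}, both are fine), then form a convex combination with weights $c_k\propto 2^{-k}/(1+\langle P_k,\|\Phi\|_1\rangle)$ to force $\Phi$-integrability. The separate treatment of finite $\X$ is unnecessary (a finite set is dense in itself), but harmless.

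The ``if'' direction, however, has a genuine gap. You claim that $\supp P=\X$ gives $P(\{x\})>0$ for every $x\in\X$, and then invoke \Cref{lemma:finicovpro}. This implication is false: full support does not mean every point is an atom. Take $\X=[0,1]$, $\Phi(x)=x-\tfrac12$; Lebesgue measure is in $\Hh$, has support $[0,1]$, yet $P(\{x\})=0$ for all $x$. So your route through \Cref{lemma:finicovpro} does not go through. The alternative you dismiss (``$\langle P,\Phi\rangle=0$ with $\supp P=\X$ forces $0\in\relint\conv\Phi(\X)$'') is the one that actually works, and is what the paper does in contrapositive form: if $\Hh$ is not properly constrained then $0$ lies on the relative boundary of $\Cc=\conv\Phi(\X)$, and \Cref{lemma:convP} forces every $P\in\Hh$ to be supported in $\Phi^{-1}(\partial\Cc)$, which is a proper subset of $\X$ since $\relint\Cc$ is non-empty and $\Phi$ is continuous. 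You should replace your first argument with this one.
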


\section{Dual e-class}\label{sec:dual}
For any properly constrained hypothesis $\Hh$ on a closed $\X\subseteq\R^n$, we can define a \emph{dual} e-class, which will play a fundamental role in our analysis. Indeed, it will turn out that it coincides precisely with the optimal e-class. First, we introduce the following notation, which will be used throughout the rest of the paper. For a function $\Phi:\X\to\R^m$, we let
$$\Lambda_{\Phi} = \left\{\lambda\in\R^m\;:\;\sup_{x\in \X}\lambda\cdot\Phi(x)\leq 1\right\}\,.$$

\begin{definition}\label{def:dual}
    Let $\Hh$ be a properly constrained hypothesis and $\Phi$ a constraint. The \emph{dual e-class} of $\Hh$ is 
    $$\Ee_\Hh^\vee = \left\{1-\lambda\cdot\Phi\;:\;\lambda\in\Lambda_\Phi\right\}\,.$$
\end{definition}

It is easy to check that $\Ee_\Hh^\vee$ is an e-class. Indeed, any $E\in\Ee_\Hh^\vee$ is non-negative (by the definition of $\Lambda_\Phi$), continuous, and $\langle P, E\rangle =1$ for all $P\in\Hh$. Moreover, $\Ee_\Hh^\vee$ is well defined, in the sense that it is independent of the specific constraint $\Phi$ used in its definition. To see this, recall that the span of $\Phi:\X\to\R^m$ is $\Span\Phi=\{\lambda\cdot\Phi\,:\,\lambda\in\R^m\}$, and note that $\Ee_\Hh^\vee=\{1-\phi\succeq 0\,:\,\phi\in\Span\Phi\}$. The next result implies that this set is the same for any constraint $\Phi$ for $\Hh$.

\begin{proposition}\label{prop:genspan}
    Let $\Hh$ be a properly constrained hypothesis on $\X$, with $\Phi:\X\to\R^m$ and $\Phi':\X\to\R^{m'}$ two constraints. Then, $\Span\Phi = \Span\Phi'$. 
\end{proposition}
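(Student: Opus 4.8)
The plan is to show mutual inclusion of the two spans by exploiting the fact that $\Span\Phi$ and $\Span\Phi'$ are both determined \emph{intrinsically} by $\Hh$, without reference to a particular constraint. The natural intrinsic object is the annihilator: a scalar function $\phi$ on $\X$ lies in $\Span\Phi$ if and only if it is a continuous function whose integral vanishes against every $P\in\Hh$ (and which is, say, affine-in-$\Phi$ — but we must be careful, since the converse inclusion is exactly what needs proof). So the first step is to establish the characterisation
$$\Span\Phi = \{\phi:\X\to\R \text{ continuous}\;:\;\langle P,\phi\rangle = 0 \text{ for all } P\in\Hh,\ \phi\in\Span\Phi\oplus\R\cdot 0\}$$
— which is trivially circular. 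Instead I would argue as follows: it suffices to show $\Span\Phi' \subseteq \Span\Phi$ (the reverse follows by symmetry). Since $\Phi$ is a proper constraint (every constraint for a properly constrained hypothesis is proper, by \Cref{lemma:finicovpro}), I may assume without loss of generality — using \Cref{lemma:prominiff} and \Cref{lemma:prominc} — that $\Phi$ has linearly independent components, i.e.\ is minimal; if $\Phi$ itself is not minimal, pass to a minimal sub-constraint $\tilde\Phi$ with $\Span\tilde\Phi = \Span\Phi$, prove the claim for $\tilde\Phi$, and conclude. Likewise I may take $\Phi'$ minimal. So reduce to the case where both $\Phi$ and $\Phi'$ are minimal constraints, with $0\in\inter\Cc$ and $0\in\inter\Cc'$.

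Now fix a component $\Phi'_j$. I want to write $\Phi'_j = \lambda\cdot\Phi$ for some $\lambda\in\R^m$. The key tool is \Cref{lemma:prored}: for each $x\in\X$ there is a finitely supported $P_x\in\Hh$ with $x\in\supp P_x$. The idea is to use finitely supported measures in $\Hh$ to generate enough linear relations. Concretely, consider the finite-dimensional situation: pick points $x_0,x_1,\dots,x_m\in\X$ such that $\Phi(x_0),\dots,\Phi(x_m)$ affinely span $\aff\Phi(\X)$ — since $0\in\inter\Cc$, we have $\aff\Phi(\X) = \R^m$, so we can find $x_0,\dots,x_m$ with $\Phi(x_0),\dots,\Phi(x_m)$ affinely independent, hence a convex combination $\sum_i c_i\Phi(x_i) = 0$ with all $c_i>0$ (again using $0\in\inter\Cc$ — this is a standard Carathéodory / interior-point argument). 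The measure $Q = \sum_i c_i\delta_{x_i}$ then lies in $\Hh$. For any $x\in\X$, and any such tuple, one can also form measures putting a small mass at $x$ and redistributing: since $\Phi(x)\in\Cc$ and $0\in\inter\Cc$, for small $\theta>0$ the point $-\frac{\theta}{1-\theta}\Phi(x)$ is still in $\Cc$, so it is a convex combination $\sum_i a_i\Phi(x_i)$ of $\Phi$-values of the chosen spanning points, and $\theta\delta_x + (1-\theta)\sum_i a_i\delta_{x_i}\in\Hh$. Testing $\Phi'$ against such measures gives $\theta\Phi'(x) + (1-\theta)\sum_i a_i\Phi'(x_i) = 0$, which expresses $\Phi'(x)$ as an affine function of $\Phi'(x_0),\dots,\Phi'(x_m)$ with coefficients depending only on the barycentric coordinates of $\Phi(x)$ relative to $\Phi(x_0),\dots,\Phi(x_m)$. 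Since those barycentric coordinates are affine functions of $\Phi(x)$, this shows each component $\Phi'_j(x)$ is an affine function of $\Phi(x)$, uniformly in $x$; the constant term must vanish because $\langle Q,\Phi'\rangle = 0$ forces it (evaluate the affine relation at the convex combination that gives $0$). Hence $\Phi'_j\in\Span\Phi$, and therefore $\Span\Phi'\subseteq\Span\Phi$.

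I expect the main obstacle to be the careful bookkeeping in the interior-point / barycentric argument — making sure the coefficients $a_i$ (or $c_i$) can genuinely be taken to come from a \emph{fixed} affinely independent tuple $x_0,\dots,x_m$ valid for all $x\in\X$ simultaneously, and that the measures so constructed are legitimately in $\PR_\Phi$ (trivial here, as they are finitely supported) and satisfy $\langle P,\Phi\rangle = 0$ exactly. A cleaner packaging, which I would actually adopt in the write-up, is this: the set $V = \{\langle P,\Phi\rangle : P\in\PR_\Phi\} = \Cc$ (the convex hull of $\Phi(\X)$, possibly its closure — but with $0$ interior this subtlety is harmless near $0$), and the map $P\mapsto(\langle P,\Phi\rangle,\langle P,\Phi'\rangle)\in\R^{m+m'}$ has image whose intersection with $\{\langle P,\Phi\rangle = 0\}$ is $\{(0,0)\}$ — i.e.\ $\Hh$'s image under $\Phi'$ is $\{0\}$. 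One then argues that the image of $P\mapsto(\langle P,\Phi\rangle,\langle P,\Phi'\rangle)$ over \emph{all} of $\PR_\Phi$ is exactly the convex hull of $\{(\Phi(x),\Phi'(x)):x\in\X\}$, and the constraint that the slice at $\langle P,\Phi\rangle=0$ is a single point $\{0\}$, combined with $0\in\inter\Cc$, forces the convex hull to lie in a graph of a \emph{linear} map $\R^m\to\R^{m'}$; reading off that map gives the coefficients $\lambda$ for each $\Phi'_j$. This is essentially the statement that if a convex set in $\R^{m}\times\R^{m'}$ surjects onto a set with $0$ in its interior and has a singleton fibre over $0$, it is a linear graph — a short lemma I would state and prove separately.
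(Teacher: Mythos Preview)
Your first approach has a real gap. The claim that one can pick $m+1$ points $x_0,\dots,x_m\in\X$ with $\Phi(x_0),\dots,\Phi(x_m)$ affinely independent \emph{and} $0=\sum_i c_i\Phi(x_i)$ with all $c_i>0$ is false in general: take $m=2$ and $\Phi(\X)=\{(\pm1,0),(0,\pm1)\}$; then $0\in\inter\Cc$, yet for every choice of three vertices $0$ lies on an edge of the resulting triangle. Without $0$ in the open simplex, the step ``$-\tfrac{\theta}{1-\theta}\Phi(x)$ is a convex combination of the chosen $\Phi(x_i)$'' can fail, and the barycentric-coordinate argument collapses. Steinitz's theorem would give a finite subset of $\Phi(\X)$ with $0$ in the interior of its convex hull, but with more than $m+1$ points the barycentric coordinates are non-unique and your ``$\Phi'(x)$ is an affine function of $\Phi(x)$'' conclusion no longer follows as written.

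Your second packaging, however, is correct and proves the proposition by a route genuinely different from the paper's. The lemma you isolate is true and short: if $\tilde\Cc\subseteq\R^m\times\R^{m'}$ is convex, $0\in\inter\pi_1(\tilde\Cc)$, and $\tilde\Cc\cap(\{0\}\times\R^{m'})=\{(0,0)\}$, then for any $(y,w)\in\tilde\Cc$ one picks $s>0$ with $(-sy,w')\in\tilde\Cc$, and the convex combination hitting first coordinate $0$ forces $w'=-sw$; this makes every fibre a singleton, convexity forces the resulting map $g$ to be affine on $\Cc$, and $g(0)=0$ together with $0\in\inter\Cc$ makes it linear. Applied to $\tilde\Cc=\conv\{(\Phi(x),\Phi'(x)):x\in\X\}$ (the image of finitely supported measures, which lie in $\Hh$ exactly when $\langle P,\Phi\rangle=0$), this gives $\Phi'=A\Phi$ at once.

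The paper instead reduces to the finite-cardinality case. It first proves $\Span\Phi=\Hh^\perp$ when $\X$ is finite (\Cref{lemma:finspan}), then for general $\X$ shows that $\Span\Phi\cup\{\phi\}$ (with $\phi\in\Span\Phi'$) is linearly dependent by checking dependence on every $(d{+}1)$-point subset via \Cref{lemma:ludovic}, embedding each such subset in a finite matching set $S'$ (\Cref{lemma:finprocont}) and invoking the finite case on $\Hh_{S'}$. Your convex-geometric argument bypasses the matching-set and finite-case machinery entirely and is more self-contained; the paper's route, on the other hand, reuses infrastructure that is built anyway for the main optimality theorem.
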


When $\X$ has finite cardinality, \Cref{prop:genspan} is a direct consequence of \Cref{lemma:finspan} (stated in the next section). For a general closed $\X\subseteq\R^n$, we defer the proof to the end of \Cref{sec:matching}.

We now state the main result of this work: for any properly constrained hypothesis, the optimal e-class exists and coincides with the dual e-class.

\begin{theorem}\label{thm:dualopt}
    Let $\Hh$ be a properly constrained hypothesis. The dual e-class is the optimal e-class.
\end{theorem}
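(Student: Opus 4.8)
The plan is to show that the dual e-class $\Ee_\Hh^\vee$ is a majorising e-class whose every element is maximal; by \Cref{lemma:maxopt} this immediately yields that it is the optimal e-class. The ``every element is maximal'' direction should be the easier half: given $E = 1 - \lambda\cdot\Phi \in \Ee_\Hh^\vee$ and a putative strict majoriser $E' \in \Ee_\Hh$ with $E' \succeq E$, one uses \Cref{lemma:prored} to pick, for each $x_0$ where $E'(x_0) > E(x_0)$, a finitely supported $P \in \Hh$ with $x_0 \in \supp P$; then $\langle P, E'\rangle > \langle P, E\rangle = 1$, contradicting $E' \in \Ee_\Hh$. Hence $E' = E$, so $E$ is maximal. (One may first reduce to a minimal constraint via \Cref{lemma:prominc,lemma:prominiff} so that the representation $\lambda \mapsto 1-\lambda\cdot\Phi$ is injective, though this is not strictly needed for maximality.)

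The substantial work is showing $\Ee_\Hh^\vee$ is majorising: every $E \in \Ee_\Hh$ is dominated by some $1 - \lambda\cdot\Phi$ with $\lambda \in \Lambda_\Phi$. Following the roadmap the author lays out, I would first settle the \emph{finite} sample space case ($\Cref{sec:finite}$): when $\X$ is finite, $\Ee_\Hh$ lives in $\R^d$ and one can argue by a separating-hyperplane / LP-duality argument. Concretely, for fixed $E\in\Ee_\Hh$, the set of e-variables majorising $E$ is a nonempty compact convex polytope (compactness from \Cref{lemma:finicovpro}); a maximal element $E^\star$ of it exists by the argument of \Cref{lemma:finopt}. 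One then shows $E^\star$ must have the affine form $1 - \lambda\cdot\Phi$: if not, the ``slack'' set $\{x : \langle\text{some }P\in\Hh\text{ tight at }x\}$ fails to span, and one can perturb $E^\star$ upward at a point while staying in $\Ee_\Hh$, contradicting maximality. This uses linear independence of a minimal constraint's components (\Cref{lemma:prominiff}) to identify $\Span\Phi$ with the space of functions integrating to zero against all of $\Hh$, which is the finite-dimensional analogue of \Cref{prop:genspan} (the author flags \Cref{lemma:finspan} here).

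To pass from finite to general closed $\X$, the bridge is the notion of \emph{matching sets} ($\Cref{sec:matching}$): the idea is that properties of e-variables on all of $\X$ are detected by their restrictions to suitable finite subsets $S \subseteq \X$ on which a finitely-supported measure of $\Hh$ is supported (available by \Cref{lemma:prored}). Given $E \in \Ee_\Hh$, one should produce, for each finite $S$, a dual-type dominator $1 - \lambda_S\cdot\Phi\big|_S$ of $E\big|_S$ in the restricted problem; the $\lambda_S$ lie in the compact set $\Lambda_\Phi$ (or a suitable compactification using minimality of the constraint, which forces $\Lambda_\Phi$ bounded), so a limiting/compactness argument over an increasing exhaustion of $\X$ by finite sets extracts a single $\lambda \in \Lambda_\Phi$ with $1 - \lambda\cdot\Phi \succeq E$ everywhere. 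Continuity of $\Phi$ is what makes the pointwise limit inequality propagate to all of $\X$, not just the dense union of the $S$'s.

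The main obstacle I anticipate is precisely this last limiting step: ensuring the finite-dimensional dominators can be chosen \emph{consistently} enough across the exhaustion that a single limit $\lambda$ works simultaneously at every point of $\X$. Two things have to be controlled — that the relevant $\lambda_S$ stay in a fixed compact set (handled by reducing to a minimal constraint, so $0 \in \inter\Cc$ and $\Lambda_\Phi$ is compact by \Cref{lemma:finicovpro}/\Cref{lemma:prominiff}), and that the domination inequality is closed under the limit (handled by continuity of $\Phi$ and of $E$, noting $E$ is bounded above by the function $A$ of \Cref{lemma:finicovpro}). I expect the ``matching set'' formalism is introduced exactly to package this diagonal argument cleanly, and that verifying a chosen finite $S$ genuinely ``matches'' — i.e. that domination of $E\big|_S$ by an affine function on $S$ can be upgraded to domination on $\X$ — is where the care is needed.
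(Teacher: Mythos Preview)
Your overall architecture is the paper's: show every element of $\Ee_\Hh^\vee$ is maximal (your argument via \Cref{lemma:prored} is essentially \Cref{lemma:dualmax}), then show $\Ee_\Hh^\vee$ majorises by first treating finite $\X$ and lifting via matching sets and compactness of $\Lambda_\Phi$ for a minimal constraint. The finite case sketch is in the right spirit, though the paper's concrete mechanism is slightly different from an LP-duality argument: it works with the vertices of the polytope $\Hh$, shows a maximal $E$ satisfies $\langle P,E\rangle=1$ for \emph{every} $P\in\Hh$, and then invokes $\Span\Phi=\Hh^\perp$ (\Cref{lemma:finspan}).

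There is, however, a real gap in your lifting step. You write that once a limiting $\lambda$ dominates $E$ on the dense countable union $\bigcup S_n$, ``continuity of $\Phi$ [and of $E$] is what makes the pointwise limit inequality propagate to all of $\X$.'' But $E$ is merely a Borel function---e-variables are not assumed continuous---so domination on a dense set does \emph{not} propagate: one can have $1-\lambda\cdot\Phi(x)\geq E(x)$ for all $x$ in a dense $D$ while $E(x_0)>1-\lambda\cdot\Phi(x_0)$ at some $x_0\notin D$. An exhaustion by finite sets covers only countably many points of $\X$, so a specific $x_0$ may never appear in any $S_n$, and boundedness by $A$ does not help.

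The paper repairs exactly this with an intermediate \emph{compact} stage (\Cref{prop:compdualopt}) proved by a dichotomy argument that never assumes continuity of $E$: either for some $\ee>0$ every $\hat E\in\Ee_\Hh^\vee$ is beaten by $E$ somewhere by margin $\ee$ (refuted via \Cref{lemma:Dilemma}, which is your finite-exhaustion idea packaged to dominate on an arbitrary \emph{countable} set), or for every $\ee$ some $\hat E\in\Ee_\Hh^\vee$ satisfies $E\preceq\hat E+\ee$ on \emph{all} of $\X$; a compactness/subsequence step in $\Ee_\Hh^\vee$ then yields a true majoriser on the whole compact $\X$. Only after this does the paper pass to general closed $\X$ by exhausting with compact matching sets $K_i$ (sigma-compactness), applying the compact case on each $K_i$ to get $\lambda_i$ dominating $E$ on \emph{all} of $K_i$, and extracting a limit---which now works because every $x\in\X$ eventually lies in some $K_i$. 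Your proposal collapses these two stages into one and thereby loses the mechanism that handles discontinuous $E$.
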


We will first prove the optimality of the dual e-class for the case where $\X$ has finite cardinality (\Cref{prop:findualopt}). The notion of matching set, which we will introduce in \Cref{sec:matching}, will enable us to build on this preliminary result to establish the optimality of $\Ee_\Hh^\vee$ for compact $\X$ (\Cref{prop:compdualopt}). Finally, we will prove the general case of a closed $\X$ at the end of \Cref{sec:optimal}.

For now, we start by checking that all the elements of $\Ee_\Hh^\vee$ are maximal.

\begin{lemma}\label{lemma:dualmax}
    Let $\Hh$ be a properly constrained hypothesis. Every $E\in\Ee_\Hh^\vee$ is maximal.
\end{lemma}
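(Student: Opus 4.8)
The plan is to argue by contradiction: suppose some $E = 1 - \lambda\cdot\Phi \in \Ee_\Hh^\vee$ (with $\lambda\in\Lambda_\Phi$) is not maximal, so there is $E'\in\Ee_\Hh$ with $E'\succeq E$ and $E'(x_0) > E(x_0)$ for some $x_0\in\X$. I would like to derive a contradiction by testing $E'$ against a well-chosen measure in $\Hh$. By \Cref{lemma:prored}, for the point $x_0$ there exists $P\in\Hh$ with finite support containing $x_0$. Then $\langle P, E'\rangle \geq \langle P, E\rangle = 1$ (using that $E\in\Ee_\Hh^\vee$ integrates to exactly $1$ against every element of $\Hh$), and since $E'\succeq E$ pointwise with strict inequality at $x_0\in\supp P$, we actually get $\langle P, E'\rangle > 1$, contradicting $E'\in\Ee_\Hh$.

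Let me double-check the one subtle point: $P$ has finite support, so $\langle P, E'\rangle = \sum_{x\in\supp P} P(\{x\})E'(x)$, and each $P(\{x\}) > 0$ on the (finite) support; hence $E'(x_0) > E(x_0)$ together with $E'(x)\geq E(x)$ for all other support points gives the strict inequality $\langle P, E'\rangle > \langle P, E\rangle = 1$. This is clean and requires no measure-theoretic care precisely because $P$ is finitely supported — which is exactly why \Cref{lemma:prored} (rather than merely \Cref{lemma:finicovpro}) is the right tool here. One should also note that $E'$ being an e-variable only guarantees $\langle P, E'\rangle \leq 1$ when $\langle P, \|E'\|\rangle$ is finite, but $P$ finitely supported makes the expectation trivially well defined and finite.

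I do not anticipate a serious obstacle; the argument is short once \Cref{lemma:prored} is available. The only thing to be careful about is invoking the correct property of the dual e-class, namely that each of its elements has expectation \emph{exactly} $1$ (not merely $\leq 1$) under every $P\in\Hh$ — this was observed right after \Cref{def:dual}, since $\langle P,\Phi\rangle = 0$ forces $\langle P, 1-\lambda\cdot\Phi\rangle = 1$. This equality is what makes the strict-inequality contradiction go through; a generic e-variable integrating to something strictly less than $1$ against $P$ would leave room for a strict majoriser, so the maximality of $\Ee_\Hh^\vee$ elements genuinely relies on this tightness.
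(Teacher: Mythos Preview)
Your proof is correct and follows essentially the same line as the paper's: both pick a $P\in\Hh$ with positive mass at the point of strict inequality and use that $\langle P, E\rangle = 1$ exactly for $E\in\Ee_\Hh^\vee$ to force a contradiction with $\langle P, E'\rangle\leq 1$. The only cosmetic difference is that the paper invokes \Cref{lemma:finicovpro} (an atom suffices) rather than \Cref{lemma:prored}; your concern that finite support is needed for integrability is unnecessary, since $E'\geq 0$ makes $\langle P,E'\rangle$ well defined and bounded by $1$, while $E=1-\lambda\cdot\Phi$ is $P$-integrable because $P\in\PR_\Phi$.
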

\begin{proof}
    Fix $E\in\Ee_\Hh^\vee$, and consider an e-variable $\hat E\succeq E$. Fix any $x\in\X$. By \Cref{lemma:finicovpro}, there is a $P\in\Hh$ with an atom on $x$, and the fact that $E\in\Ee_\Hh^\vee$ implies that $\langle P, E\rangle = 1$. Hence, $P(\{x\})(\hat E(x)-E(x))\leq\langle P,\hat E -E\rangle =\langle P,\hat E\rangle -1\leq 0$, because $\hat E$ is an e-variable and $\hat E-E$ is non-negative. Since $P(\{x\})>0$ and $\hat E\succeq E$, it must be that $\hat E(x)=E(x)$. So, $E$ is maximal.
\end{proof}

\section{Domains with finite cardinality}\label{sec:finite}
For this section, we let $\X$ have finite cardinality $d$. Therefore, we can see probability measures $P$ on $\X$ and scalar functions $\phi$ on $\X$ as vectors in $\R^d$, so that $\langle P, \phi\rangle$ is now simply the dot product in $\R^d$. With this in mind, a hypothesis $\Hh$ on $\X$ is a subset of $\R^d$, and we can define $\Hh^\perp$ as the usual orthogonal set in $\R^d$. Moreover, we remark that, for any $\Phi:\X\to\R^m$, $\PR_\Phi=\PR_\X$.

\begin{lemma}\label{lemma:finspan}
    Let $\X$ have finite cardinality. Let $\Hh$ be a properly constrained hypothesis on $\X$ and $\Phi$ a constraint. Then, $\Span\Phi = \Hh^\perp$.
\end{lemma}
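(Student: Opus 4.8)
The plan is to prove the two inclusions $\Span\Phi\subseteq\Hh^\perp$ and $\Hh^\perp\subseteq\Span\Phi$ separately, working entirely in $\R^d$ where $d=|\X|$.

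\textbf{The easy inclusion.} First I would check $\Span\Phi\subseteq\Hh^\perp$. Take $\phi=\lambda\cdot\Phi\in\Span\Phi$ and any $P\in\Hh$. Since $\Phi$ is a constraint, $\langle P,\Phi\rangle=0$, so $\langle P,\phi\rangle = \lambda\cdot\langle P,\Phi\rangle = 0$. Hence $\phi\perp P$ for every $P\in\Hh$, i.e.\ $\phi\in\Hh^\perp$. (Note $\PR_\Phi=\PR_\X$ here so there are no integrability subtleties.)

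\textbf{The harder inclusion.} For $\Hh^\perp\subseteq\Span\Phi$, the natural route is a dimension count: I would show that $\dim\Span\Phi = d - \dim\big(\operatorname{aff}\Hh\big)$ or, equivalently, that $\Span\Phi = (\operatorname{aff}\Hh - P_0)^\perp$ for any $P_0\in\Hh$, and then combine with the first inclusion. Concretely: $\Hh$ is a convex subset of the simplex in $\R^d$, defined as $\Hh=\{P\in\PR_\X : \langle P,\Phi_i\rangle=0,\ i=1,\dots,m\}$, which is the intersection of the simplex with the linear subspace $V:=\{P : \Phi_i\cdot P=0\ \forall i\}=(\Span\Phi)^\perp$. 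The key point is that $\Hh$ spans $V$ as an affine set, i.e.\ $\operatorname{aff}\Hh = V\cap\{\langle\mathbf 1,\cdot\rangle = 1\}$ — this is where properness is used. By \Cref{lemma:prored} (or \Cref{lemma:finicovpro}/\Cref{lemma:profull}), for each $x\in\X$ there is $P\in\Hh$ with $x\in\supp P$; taking a suitable convex combination, or invoking that $0\in\relint\conv\Phi(\X)$, one gets a $P^\ast\in\Hh$ with full support $\X$. Then for any direction $u\in V$ with $\langle\mathbf 1,u\rangle=0$, the point $P^\ast+\varepsilon u$ lies in $\PR_\X$ for small $\varepsilon>0$ (full support gives room in every coordinate) and still satisfies the constraints (since $u\in V$), hence $P^\ast+\varepsilon u\in\Hh$; this shows $\operatorname{aff}\Hh \supseteq P^\ast + (V\cap\mathbf 1^\perp)$, and the reverse is immediate. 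Therefore $\operatorname{aff}\Hh - P^\ast = V\cap\mathbf 1^\perp$, whose orthogonal complement in $\R^d$ is $V^\perp + \mathbf 1\R = \Span\Phi + \mathbf 1\R$. To finish I need $\mathbf 1\in\Span\Phi$: this follows because $0\in\relint\conv\Phi(\X)$ forces the origin to be in the affine hull of $\Phi(\X)$ in a way that... actually the cleaner statement is that $\Hh^\perp = (\operatorname{aff}\Hh)^\perp$ computed correctly already equals $\Span\Phi$ once we observe $\mathbf 1\perp(\operatorname{aff}\Hh-P^\ast)$ automatically, so $\Hh^\perp = \operatorname{span}(\operatorname{aff}\Hh-P^\ast)^\perp\cap(\text{handle the }\mathbf 1\text{ ambiguity})$. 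I would phrase it as: $Q\in\Hh^\perp$ iff $Q\cdot P=0$ for all $P\in\Hh$; writing $P=P^\ast+w$ with $w\in V\cap\mathbf 1^\perp$ and noting $Q\cdot P^\ast$ can be normalised away, $Q\in\Hh^\perp$ forces $Q\perp(V\cap\mathbf 1^\perp)$ and $Q\cdot P^\ast=0$, and a short argument (using that $P^\ast$ itself and the constant function sit in specific places) pins $Q\in V^\perp=\Span\Phi$.

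\textbf{Main obstacle.} The delicate point is the $\mathbf 1$ / normalisation bookkeeping: $\Hh$ lives in the affine hyperplane $\langle\mathbf 1,P\rangle=1$, not in a linear subspace, so $\Hh^\perp$ (a genuine orthogonal complement of a subset of $\R^d$) interacts with the all-ones vector, whereas $\Span\Phi$ need not contain $\mathbf 1$. The resolution is exactly that the constraint being \emph{proper} ($0\in\relint\conv\Phi(\X)$) is what guarantees a full-support $P^\ast\in\Hh$, and it is the existence of this interior point that makes the affine span of $\Hh$ as large as the constraints allow; I expect the bulk of the write-up to be the careful verification that perturbing $P^\ast$ in any constraint-respecting, mass-preserving direction stays in $\Hh$, plus the linear-algebra lemma identifying $\big(\operatorname{span}\Hh\big)^\perp$ (the orthogonal complement of $\Hh$ viewed as a set of vectors) with $\Span\Phi$. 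Everything else is routine.
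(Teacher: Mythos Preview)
Your approach is essentially the paper's: both hinge on the existence of a fully supported $P^\ast\in\Hh$ (from properness) and the fact that perturbing $P^\ast$ within $V\cap\mathbf 1^\perp$, where $V=(\Span\Phi)^\perp$, stays in $\Hh$. The paper packages this as the standalone lemma ``$\lin(V\cap\Delta)=V$ whenever $V\cap\relint\Delta\neq\varnothing$'' (\Cref{lemma:span}), then concludes in one line via $\Hh^\perp=(\lin\Hh)^\perp=V^\perp=\Span\Phi$.

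Your detour through $\operatorname{aff}\Hh$ is where you wobble. The tentative claim ``I need $\mathbf 1\in\Span\Phi$'' is false in general (e.g.\ $\X=\{0,1\}$, $\Phi(x)=x-\mu$ with $\mu\neq 1/2$), so drop it. Your recovery sketch is correct but leaves the decisive step implicit: from $Q\perp P^\ast$ and $Q\perp(V\cap\mathbf 1^\perp)$ you get $Q\perp\big(\R P^\ast+(V\cap\mathbf 1^\perp)\big)$, and this space equals all of $V$ because $P^\ast\in V$ while $\langle\mathbf 1,P^\ast\rangle=1\neq 0$, so $P^\ast$ restores the dimension that intersecting with $\mathbf 1^\perp$ removed. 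Working with $\lin\Hh$ rather than $\operatorname{aff}\Hh$ from the outset, as the paper does, avoids the $\mathbf 1$ bookkeeping entirely.
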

\begin{proof}
    First, note that $\Hh = (\Span\Phi)^\perp\cap\PR_\X$. By \Cref{lemma:profull} and \Cref{lemma:relintsupp}, $\Hh\cap\relint\PR_\X\neq\varnothing$. In particular, by \Cref{lemma:span}, $\lin\Hh = \lin((\Span\Phi)^\perp\cap\PR_\X) = (\Span\Phi)^\perp$. As we are dealing with finite dimensional spaces, $\Span\Phi =(\Span\Phi)^{\perp\perp} = (\lin\Hh)^\perp = \Hh^\perp$.
\end{proof}

\begin{proposition}\label{prop:findualopt}
    If $\Hh$ is a properly constrained hypothesis on a finite set $\X$, $\Ee_\Hh^\vee$ is optimal.
\end{proposition}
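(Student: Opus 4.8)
The plan is to establish \Cref{prop:findualopt} by combining \Cref{lemma:maxopt} with \Cref{lemma:dualmax} and the existence result of \Cref{lemma:finopt}. By \Cref{lemma:finicovpro}, a properly constrained hypothesis satisfies $\sup_{P\in\Hh}P(\{x\})>0$ for every $x\in\X$, so \Cref{lemma:finopt} guarantees that the optimal e-class exists and, by \Cref{lemma:maxopt}, it coincides with the set of all maximal e-variables. Since \Cref{lemma:dualmax} already tells us that every $E\in\Ee_\Hh^\vee$ is maximal, the only thing left is the reverse inclusion: every maximal e-variable lies in $\Ee_\Hh^\vee$. Equivalently, since $\Ee_\Hh^\vee$ is itself an e-class, it suffices to show that $\Ee_\Hh^\vee$ is a majorising e-class — then by \Cref{lemma:maxopt} it must contain the (unique) optimal e-class, hence equals it (using \Cref{lemma:dualmax} for the other containment).

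First I would set up the finite-dimensional picture: write $\X=\{x_1,\dots,x_d\}$ and identify e-variables and measures with vectors in $\R^d$. Recall from \Cref{lemma:finspan} that $\Span\Phi=\Hh^\perp$ for any constraint $\Phi$, and from the discussion after \Cref{def:dual} that $\Ee_\Hh^\vee=\{1-\phi\succeq 0:\phi\in\Span\Phi\}=\{1-\phi\succeq 0:\phi\in\Hh^\perp\}$, where $1$ denotes the constant vector. So the goal becomes: given $E\in\Ee_\Hh$, find $\phi\in\Hh^\perp$ with $1-\phi\geq 0$ (coordinatewise) and $1-\phi\succeq E$, i.e. $\phi\leq 1-E$ coordinatewise. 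Now $E\in\Ee_\Hh$ means $\langle P,E\rangle\leq 1$ for all $P\in\Hh$, equivalently $1-E$ has nonnegative inner product with every $P\in\Hh$. The natural approach is a separation / duality argument: consider the affine subspace $1+\Hh^\perp$ (which, since $\Hh$ is convex and finitely constrained, is exactly $\lin\Hh$-orthogonal-complement-shifted — more precisely, the set of vectors $v$ with $\langle P,v\rangle=1$ for all $P\in\Hh$, using $\langle P,1\rangle=1$), and the downward cone $\{w:w\preceq 1-E\}$; I want to show these intersect. The vector $1-E$ itself satisfies $\langle P,1-E\rangle\geq 0$ for all $P\in\Hh$, and I want to "project it up" to an element $\phi$ of $\Hh^\perp$ still below $1-E$.

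The cleanest route is probably to run an explicit greedy/iterative construction analogous to the proof of \Cref{lemma:finopt}: starting from $E$, successively increase coordinates as much as the e-variable constraint $\langle P,\cdot\rangle\leq 1$ (all $P\in\Hh$) permits, obtaining a maximal e-variable $E^\star\succeq E$; then argue $E^\star\in\Ee_\Hh^\vee$. Maximality of $E^\star$ forces $\langle P,E^\star\rangle=1$ for every $P$ that can put mass on a given coordinate (exactly as in \Cref{lemma:dualmax}), and since for every $x$ there is such a $P$ by \Cref{lemma:finicovpro}, one gets $\langle P,E^\star\rangle =1$ for a rich enough family of $P$; combined with \Cref{lemma:prored} (finitely supported measures suffice) one deduces $1-E^\star\in\Hh^\perp$, i.e. $E^\star=1-\phi$ with $\phi\in\Hh^\perp=\Span\Phi$, so $E^\star\in\Ee_\Hh^\vee$. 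This shows $\Ee_\Hh^\vee$ is majorising, and \Cref{lemma:maxopt} together with \Cref{lemma:dualmax} finishes the proof.

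The main obstacle I anticipate is the step that upgrades "$\langle P,E^\star\rangle=1$ for all $P\in\Hh$ with $x\in\supp P$" into the global linear-algebraic statement $1-E^\star\in\Hh^\perp$. This requires knowing that the measures in $\Hh$ (or enough of them — e.g. the finitely supported ones from \Cref{lemma:prored}) span $\lin\Hh=\Hh^{\perp\perp}$, so that a vector orthogonal to all of them lies in $(\lin\Hh)^\perp=\Hh^\perp$; here one must be careful to separate the "affine" part ($\langle P,\cdot\rangle=1$, coming from $1-E^\star$) from the "linear" part, using $\langle P,1\rangle=1$ to pass between $\langle P,1-E^\star\rangle=0$ and $\langle P,E^\star\rangle=1$. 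The relevant spanning fact is essentially \Cref{lemma:finspan} read backwards, so I expect the argument to go through, but making the greedy construction terminate cleanly at a genuinely maximal point (rather than merely a locally optimal one) — and verifying that the resulting e-variable satisfies the equality on the whole support-rich family — is where the care is needed.
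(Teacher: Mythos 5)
Your high-level skeleton matches the paper's: existence of the optimal e-class via \Cref{lemma:finicovpro} and \Cref{lemma:finopt}, the inclusion $\Ee_\Hh^\vee\subseteq\{\text{maximal e-variables}\}$ via \Cref{lemma:dualmax}, and a reduction to showing that every maximal e-variable lies in $\Ee_\Hh^\vee$ (equivalently, that $\Ee_\Hh^\vee$ is majorising, since \Cref{lemma:finopt} already gives a maximal majoriser of any $E\in\Ee_\Hh$). The gap is in the one step that carries all the weight: your claim that ``maximality of $E^\star$ forces $\langle P,E^\star\rangle=1$ for every $P$ that can put mass on a given coordinate, exactly as in \Cref{lemma:dualmax}.'' The argument of \Cref{lemma:dualmax} runs in the opposite direction: there one \emph{starts} from the identity $\langle P,E\rangle=1$ for all $P\in\Hh$ (automatic for elements of the dual class) and deduces maximality. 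For a general maximal $E^\star$ this identity is precisely what has to be proven, and it does not follow from a naive limiting argument either: the obstruction to increasing $E^\star$ at $x$ only yields measures $P_\ee$ with $\langle P_\ee,E^\star\rangle+\ee P_\ee(\{x\})>1$, and in the limit $\ee\to0$ the atom at $x$ may vanish, so you do not directly obtain an equality-attaining $P$ charging $x$.

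The paper closes this gap with the polytope structure of $\Hh$: it first shows $\sup_{P\in\Hh}\langle P,E^\star\rangle=1$ (rescaling plus maximality), takes the vertices $\V_0$ attaining equality, and proves by contradiction (via the continuous map $\varphi_x:\ee\mapsto\max_{\hat P\in\V\setminus\V_0}\langle\hat P,E^\star+\ee\one{x}\rangle$) that the supports of $\V_0$ cover $\X$; averaging $\V_0$ then gives a fully supported $P^\star$, hence $P^\star\in\relint\Hh$, and writing $P^\star$ as a strictly positive convex combination of \emph{all} vertices forces $\langle\hat P,E^\star\rangle=1$ for every vertex, hence for every $P\in\Hh$. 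Only then does $1-E^\star\in\Hh^\perp=\Span\Phi$ follow from \Cref{lemma:finspan}. Your sketch also needs this second ingredient: knowing $\langle P,E^\star\rangle=1$ merely for \emph{some} atom-bearing measures (even finitely supported ones from \Cref{lemma:prored}) does not give orthogonality of $1-E^\star$ to all of $\Hh$ unless you show that the equality-attaining subfamily spans $\lin\Hh$ — which is exactly what the relative-interior argument delivers and what your proposal leaves unproven. So the plan is structurally right, but the two central claims are respectively justified by a reversed appeal to \Cref{lemma:dualmax} and by an unestablished spanning assertion; filling them in essentially amounts to reconstructing the paper's vertex and relative-interior argument.
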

\begin{proof}
    \Cref{lemma:finicovpro} and \Cref{lemma:finopt} ensure that the optimal e-class exists. Hence, by \Cref{lemma:maxopt}, it is sufficient to show that $\Ee_\Hh^\vee$ coincides with the set of all maximal e-variables. By \Cref{lemma:dualmax}, all the elements of $\Ee_\Hh^\vee$ are maximal. Therefore, we are left to show that every maximal e-variable is in $\Ee_\Hh^\vee$. Fix a maximal e-variable $E$. We adopt the following proof strategy. First, we show that there is a fully supported $\tilde P\in\Hh$ such that $\langle\tilde P, E\rangle =1$. Using that such $\tilde P$ is in the relative interior of $\Hh$, we establish that $\langle P, E\rangle =1$ for all $P\in\Hh$. Finally, we see that this property is enough to ensure that $E\in\Ee_\Hh^\vee$. 

    We start by noticing that since $\X$ has finite cardinality, $\Hh$ is a convex and closed polytope (it is the intersection of a finite dimensional closed simplex with an affine space). Letting $\V\subseteq\Hh$ denote the (finite) set of vertices of $\Hh$, every $P\in\Hh$ is a convex combination of elements of $\V$.
    
    Fix a maximal e-variable $E$,\footnote{Note that a maximal e-variable has to exist by \Cref{lemma:maxopt}, since we already know that the optimal e-class exists. A more direct argument: the constant function $1$ belongs to $\Ee_\Hh^\vee$, by \Cref{lemma:dualmax} it is a maximal e-variable.} and let $\V_0 = \{\hat P \in\V\,:\,\langle \hat P, E\rangle =1\}$. $\V_0$ is non-empty. Indeed, $E$ cannot be identically null, so \Cref{lemma:finicovpro} implies that $C=\sup_{P\in\Hh}\langle P, E\rangle >0$. Hence, $E/C$ is still an e-variable. Since $C\leq 1$, the maximality of $E$ yields $C=1$. Therefore, $1 =\sup_{P\in\Hh}\langle P, E\rangle = \max_{\hat P\in\V}\langle \hat P,E\rangle$, since every $P\in\Hh$ is a convex combination of vertices of $\Hh$. Hence, $\V_0\neq\varnothing$.

    Let $S =\bigcup_{\hat P\in \V_0}\supp \hat P$. Now, we will show that $S = \X$. By contradiction, let us assume that this is not the case, and fix $x\in\X\setminus S$. Let $\one{x}$ denote the function that is $1$ on $x$ and $0$ everywhere else. If $S\neq\X$, then $\V_0\neq\V$. So, we can define the mapping $\varphi_x:[0,+\infty)\to\R$ as
    $$\varphi_x\;:\;\ee\mapsto \max_{\hat P\in\V\setminus \V_0}\langle \hat P, E+\ee\one{x}\rangle\,.$$
    Since this is the maximum among a finite number of continuous functions, it is continuous. For $\ee$ large enough, $\varphi_x(\ee)>1$. Since $\varphi_x(0)<1$, there is $\tilde\ee>0$ such that $\varphi_x(\tilde\ee)=1$. Let $\tilde E = E + \tilde\ee\one{x}$. For any $\hat P\in \V_0$ we have $\langle \hat P,\tilde E\rangle = \langle\hat P, E\rangle = 1$, as $x\notin\supp(\hat P)$. Moreover, for all $\hat P\in\V\setminus \V_0$, $\langle \hat P, \tilde E\rangle \leq 1$, by definition of $\varphi_x$ and $\tilde\ee$. Therefore, $\tilde E$ is an e-variable, since its expectation is at most one under all the vertices of $\Hh$. As $\tilde\ee>0$, $\tilde E\succ E$, and so $E$ cannot be maximal, a contradiction. We conclude that $S = \X$. 

    From what we have shown so far, there is a fully supported $P^\star\in\Hh$ such that $\langle P^\star,E\rangle = 1$. Indeed, it is enough to take $P^\star = N^{-1}\sum_{\hat P\in\V_0}\hat P$, where $N$ is the cardinality of $\V_0$. In particular, by \Cref{lemma:relintsupp}, $P^\star\in\relint\PR_\X$. Since $\Hh = \PR_\X\cap(\Span\Phi)^\perp$, and $\relint(\PR_\X)\cap(\Span\Phi)^\perp$ is non-empty ($P^\star$ belongs to it), $\relint\Hh = \relint(\PR_\X)\cap(\Span\Phi)^\perp$ (see, e.g., Proposition 2.1.10 in Chapter A of \citealp{hiriart2004fundamentals}). So, $P^\star\in\relint\Hh$. As $\Hh$ is a convex polytope, there are strictly positive coefficients $(\alpha_{\hat P})_{\hat P\in\V}$, summing to $1$, such that $P^\star = \sum_{\hat P\in\V}\alpha_{\hat P}\hat P$ (see, e.g., Remark 2.1.4 in Chapter A of \citealp{hiriart2004fundamentals}). In particular,
    $$0=\langle P^\star, E\rangle-1 = \sum_{\hat P\in\V}\alpha_{\hat P}(\langle \hat P, E\rangle - 1)\,,$$ where $\alpha_{\hat P}>0$ for all $\hat P\in\V$, and so $\langle \hat P, E\rangle = 1$ for all $\hat P\in\V$. Hence, $\langle P, E\rangle = 1$ for all $P\in\Hh$.

    Now, let $\Phi$ be a constraint for $\Hh$. $\langle P, E\rangle =1$ for all $P\in\Hh$, so $E-1\in\Hh^\perp = \Span\Phi$ by \Cref{lemma:finspan}. Since $E$ is non-negative, $E=1+\lambda\cdot\Phi$ for some $\lambda\in\Lambda_\Phi$, and so $E\in\Ee_\Hh^\vee$.
\end{proof}

\section{Matching sets}\label{sec:matching}

Thus far, we have shown the optimality of the dual e-class when $\X$ has finite cardinality. The notions of compatible and matching sets that we now introduce will allow us to build on these finite-cardinality results to address the case of a generic closed set $\X\subseteq\R^n$. 

For any $S\subseteq\X$, recall that $\PR_S = \{P\in\PR_\X\,:\,\supp P\subseteq S\}$. For a measurable $\Phi:\X\to\R^m$, let $\PR_{\Phi|S} = \{P\in\PR_S\,:\,\langle P,\|\Phi\|_1\rangle<+\infty\}$. We notice that $\PR_{\Phi|S}= \PR_\Phi\cap\PR_S$.

\begin{definition}[Compatible and matching sets]
    Let $\Hh$ be a hypothesis on $\X$ and $S\subseteq\X$ a closed set. We say that $S$ is a \emph{compatible} set (for $\Hh$) if there is $P\in\Hh$ such that $\supp P\subseteq S$. We say that $S$ is a \emph{matching} set (for $\Hh$) if there is $P\in\Hh$ such that $\supp P=S$.
\end{definition}

\begin{definition}[Restriction of a hypothesis]
    The restriction of a hypothesis $\Hh$ on $\X$ to a compatible set $S\subseteq\X$ is defined as $$\Hh_S = \Hh\cap\PR_S = \{P\in\Hh\;:\;\supp P\subseteq S\}\,.$$ 
\end{definition}

By definition of compatible set, $\Hh_S$ is non-empty, and hence a hypothesis on $S$. We list here a few results outlining basic properties of the restrictions of finitely and properly constrained hypotheses that we will use in the next section. The proofs, rather short, can be found in \Cref{app:proofs}.

\begin{lemma}\label{lemma:Hsfic}
    Let $\Hh$ be a finitely constrained hypothesis on $\X$ and $\Phi$ a constraint. A closed set $S\subseteq\X$ is compatible if, and only if, $0\in\conv\Phi(S)$. In such case, $\Hh_S$ is a finitely constrained hypothesis on $S$ and $\Phi\big|_S$ is a constraint for it. In particular, $\Hh_S = \{P\in\PR_{\Phi|S}\,:\,\langle P,\Phi\big|_S\rangle=0\}$.
\end{lemma}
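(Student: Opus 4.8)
\textbf{Proof plan for Lemma~\ref{lemma:Hsfic}.}
The plan is to first settle the equivalence ``$S$ compatible $\iff 0\in\conv\Phi(S)$,'' and then to identify $\Hh_S$ with the claimed level set. For the forward direction of the equivalence, if $S$ is compatible there is $P\in\Hh$ with $\supp P\subseteq S$; then $P\in\PR_\Phi$ so $\langle P,\Phi\rangle = 0$, and since $P$ is a probability measure supported on $S$, its barycentre $\langle P,\Phi\rangle$ lies in $\conv\Phi(S)$ (this is the standard fact that the mean of a probability measure lands in the closed convex hull of its support; here one should be slightly careful that $\conv\Phi(S)$ need not be closed, but the barycentre of a measure whose support is literally contained in $S$ is a limit of finite convex combinations of points of $\Phi(S)$, and in fact one can reduce to finitely supported $P$ as below, so $0\in\conv\Phi(S)$ exactly). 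For the converse, if $0\in\conv\Phi(S)$, then by Carathéodory's theorem $0 = \sum_{i=0}^{m}\alpha_i\Phi(x_i)$ for finitely many $x_i\in S$ and weights $\alpha_i\geq 0$ summing to $1$; the finitely supported measure $P = \sum_i\alpha_i\delta_{x_i}$ then lies in $\PR_S\subseteq\PR_\Phi$ (finite support makes $\langle P,\|\Phi\|_1\rangle$ trivially finite) and satisfies $\langle P,\Phi\rangle = 0$, hence $P\in\Hh$ with $\supp P\subseteq S$, so $S$ is compatible.

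Next I would verify that $\Phi\big|_S$ is a constraint for $\Hh_S$, i.e.\ that $\Hh_S = \{P\in\PR_{\Phi|S}\,:\,\langle P,\Phi\big|_S\rangle = 0\}$ and that $\Phi\big|_S$ is continuous on $S$. Continuity of $\Phi\big|_S$ is immediate since it is the restriction of the continuous $\Phi$ to $S$ with the subspace topology. For the set identity: by definition $\Hh_S = \Hh\cap\PR_S = \{P\in\PR_\Phi\,:\,\langle P,\Phi\rangle = 0\}\cap\PR_S$, and since $\PR_\Phi\cap\PR_S = \PR_{\Phi|S}$ (noted right before the lemma), and since for $P\in\PR_S$ the integral of $\Phi$ over $\X$ is the same as the integral of $\Phi\big|_S$ over $S$ (the measure charges only $S$), this is exactly $\{P\in\PR_{\Phi|S}\,:\,\langle P,\Phi\big|_S\rangle = 0\}$. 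Finally, $\conv(\Phi\big|_S)(S) = \conv\Phi(S)$, which contains $0$ by the equivalence just proved, so $\Phi\big|_S$ is at least a constraint (and the containment $0\in\conv\Phi(S)$ is all the lemma claims — properness of the restricted constraint is not asserted here and indeed may fail).

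The only genuinely delicate point is the barycentre argument in the forward direction of the equivalence, where one must make sure the mean of an arbitrary (not necessarily finitely supported) $P\in\Hh_S$ really lies in $\conv\Phi(S)$ rather than merely in its closure. I would handle this by not arguing directly about arbitrary $P$: compatibility only requires \emph{existence} of some $P\in\Hh$ with $\supp P\subseteq S$, so it suffices to observe that the existence of any such $P$ forces $0 = \langle P,\Phi\rangle$ and that this point is a limit of convex combinations of $\Phi(\supp P)\subseteq\Phi(S)$; but then I would invoke Carathéodory as above to replace this by a genuine finite convex combination, concluding $0\in\conv\Phi(S)$ on the nose. (Alternatively, one can cite that a finitely constrained hypothesis is always non-empty and, restricted to a compatible $S$, the same measure-replacement argument used in \Cref{lemma:prored} produces a finitely supported witness.) Everything else is bookkeeping with the definitions of $\PR_\Phi$, $\PR_S$, and $\PR_{\Phi|S}$.
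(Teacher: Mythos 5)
Most of your proposal matches the paper's argument: Carath\'eodory (\Cref{lemma:car}) for the direction ``$0\in\conv\Phi(S)\Rightarrow S$ compatible'' via a finitely supported witness, and the routine bookkeeping showing $\Hh_S=\{P\in\PR_{\Phi|S}\,:\,\langle P,\Phi\big|_S\rangle=0\}$ with $\Phi\big|_S$ continuous. The problem is the forward direction, which you yourself flag as the delicate point: from $P\in\Hh$ with $\supp P\subseteq S$ and $\langle P,\Phi\rangle=0$ you only get directly that $0$ lies in the \emph{closed} convex hull of $\Phi(S)$, and your proposed repair does not close this gap. Invoking Carath\'eodory to ``replace a limit of convex combinations by a genuine finite convex combination'' is not a valid use of that theorem: Carath\'eodory represents points already known to be in $\conv\Phi(S)$, and cannot upgrade a point of $\cl(\conv\Phi(S))$ to a point of $\conv\Phi(S)$ (these can differ, since $\Phi(S)$ need not be compact — $S$ may be unbounded, $\Phi(S)$ non-closed). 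Your parenthetical fallback via the measure-replacement argument of \Cref{lemma:prored} also does not apply: that argument rests on \Cref{cor:Yfinsupp}, which needs $0\in\relint(\conv\Phi(S))$, i.e.\ properness of the restricted constraint — exactly what is not assumed (and may fail) here.

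What is actually needed is the statement that a $P\in\PR_{\Phi|S}$ with $\langle P,\Phi\rangle=0$ forces $0\in\conv\Phi(S)$ on the nose, and this requires a genuine argument: the paper proves it as \Cref{lemma:convP}, by induction on $m$ using a separating hyperplane (if $0\notin\conv\Phi(S)$, pick $v$ with $v\cdot y\geq0$ on the hull; then $v\cdot\Phi\geq0$ pointwise with zero mean forces $\supp P\subseteq\{v\cdot\Phi=0\}$, and one descends one dimension). The paper's proof of the present lemma simply cites \Cref{lemma:convP} for this direction. So either cite that lemma or reproduce such a hyperplane-induction argument; as written, your forward implication has a real hole.
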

\begin{lemma}\label{lemma:comsupp}
    Let $\Hh$ be a properly constrained hypothesis on $\X$ and $S\subseteq\X$ a compatible set. $S$ is a matching set if, and only if, $\Hh_S$ is a properly constrained hypothesis on $S$.
\end{lemma}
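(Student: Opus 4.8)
The plan is to reduce the statement to two characterisations that are already available: the description of $\Hh_S$ as a finitely constrained hypothesis on $S$ (\Cref{lemma:Hsfic}), and the support characterisation of proper constrainedness (\Cref{lemma:profull}). The key observation is that ``$S$ is a matching set for $\Hh$'' and ``$\Hh_S$ admits a fully supported measure'' are literally the same condition once one unfolds the definition of $\Hh_S$, so the whole argument is a chain of equivalences with no analytic content.

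First I would note that since $\Hh$ is properly constrained it is in particular finitely constrained, so it admits a constraint $\Phi:\X\to\R^m$. As $S$ is compatible, \Cref{lemma:Hsfic} guarantees that $\Hh_S$ is a finitely constrained hypothesis on $S$, with $\Phi\big|_S$ as a constraint. Moreover $S$, being a closed subset of the closed set $\X\subseteq\R^n$, is itself closed in $\R^n$; hence $\Hh_S$ is a finitely constrained hypothesis on the closed domain $S$ in exactly the sense required to apply the earlier lemmas with $S$ playing the role of $\X$.

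Next I would apply \Cref{lemma:profull} to $\Hh_S$ on $S$: $\Hh_S$ is a properly constrained hypothesis on $S$ if, and only if, there is $P\in\Hh_S$ with $\supp P = S$. It then only remains to identify this right-hand condition with $S$ being a matching set for $\Hh$. By definition $P\in\Hh_S$ means $P\in\Hh$ and $\supp P\subseteq S$; since the requirement $\supp P = S$ already forces $\supp P\subseteq S$, the existence of some $P\in\Hh_S$ with $\supp P = S$ is equivalent to the existence of some $P\in\Hh$ with $\supp P = S$, which is precisely the definition of $S$ being a matching set for $\Hh$. Chaining these equivalences proves the lemma.

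I do not expect a genuine obstacle here: the argument is bookkeeping on top of \Cref{lemma:Hsfic} and \Cref{lemma:profull}. The only point deserving a moment of care is verifying that \Cref{lemma:profull} is legitimately applicable to $\Hh_S$ viewed as a hypothesis on its own closed domain $S$ — which is exactly what \Cref{lemma:Hsfic} supplies — rather than attempting to reason directly on $\X$.
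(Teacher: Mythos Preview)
Your proposal is correct and matches the paper's own proof, which simply states that the result is a direct consequence of \Cref{lemma:profull}; you have merely made explicit the implicit appeal to \Cref{lemma:Hsfic} needed to ensure $\Hh_S$ is finitely constrained on the closed domain $S$.
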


\begin{lemma}\label{lemma:finprocont}
    Let $\Hh$ be a properly constrained hypothesis on $\X$. A finite union of matching sets is a matching set. In particular, every finite subset of $\X$ is contained in a finite matching set.
\end{lemma}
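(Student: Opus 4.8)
The plan is to prove the first assertion — that a finite union of matching sets is a matching set — and then observe that the second assertion follows as an easy corollary. By induction it suffices to treat the union of two matching sets $S_1$ and $S_2$; write $S = S_1 \cup S_2$, which is closed as a finite union of closed sets. Let $\Phi$ be a constraint for $\Hh$ (which exists and is automatically a proper constraint, by \Cref{lemma:finicovpro}). Since $S_i$ is matching, it is in particular compatible, so by \Cref{lemma:Hsfic} we have $0 \in \conv\Phi(S_i)$ for $i = 1, 2$, and hence $0 \in \conv\Phi(S)$, so $S$ is compatible and $\Hh_S$ is a finitely constrained hypothesis on $S$ with constraint $\Phi\big|_S$ (again by \Cref{lemma:Hsfic}). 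By \Cref{lemma:comsupp}, to show $S$ is matching it is enough to show that $\Hh_S$ is \emph{properly} constrained on $S$; and by the same lemma applied to $S_1$ and $S_2$, we know that $\Hh_{S_1}$ and $\Hh_{S_2}$ are properly constrained on $S_1$ and on $S_2$ respectively.

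The key step is therefore: given $P_i \in \Hh_{S_i}$ with $\supp P_i = S_i$ (these exist because $S_i$ is matching), consider the mixture $P = \tfrac{1}{2} P_1 + \tfrac{1}{2} P_2$. Then $P \in \PR_{\Phi|S}$, since $\langle P, \|\Phi\|_1 \rangle = \tfrac12\langle P_1, \|\Phi\|_1\rangle + \tfrac12 \langle P_2, \|\Phi\|_1\rangle < +\infty$, and $\langle P, \Phi \rangle = \tfrac12 \langle P_1, \Phi\rangle + \tfrac12 \langle P_2, \Phi \rangle = 0$, so $P \in \Hh_S$ by the description of $\Hh_S$ in \Cref{lemma:Hsfic}. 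Moreover $\supp P = \supp P_1 \cup \supp P_2 = S_1 \cup S_2 = S$. Hence $S$ is a matching set for $\Hh$ by definition. (One can equivalently invoke \Cref{lemma:profull} applied to $\Hh_S$ on the ground space $S$: the measure $P$ just constructed is fully supported on $S$, so $\Hh_S$ is properly constrained, and then \Cref{lemma:comsupp} gives that $S$ is matching — but the direct argument via $\supp P = S$ is cleaner and avoids the detour.)

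For the second assertion, fix a finite set $F = \{x_1, \dots, x_k\} \subseteq \X$. Since $\Hh$ is properly constrained, \Cref{lemma:prored} (or \Cref{lemma:finicovpro} together with \Cref{lemma:prored}) provides, for each $j$, a measure $P_j \in \Hh$ with finite support containing $x_j$; then $\supp P_j$ is a finite, hence closed, compatible set, and it is matching because $P_j$ witnesses $\supp(\supp P_j$-restriction$) = \supp P_j$ — i.e. $\supp P_j$ is by definition a matching set. Applying the first part, $\bigcup_{j=1}^k \supp P_j$ is a matching set, and it is finite and contains $F$. This completes the proof.

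I do not expect a serious obstacle here: everything reduces to the bookkeeping of the lemmas \Cref{lemma:Hsfic}, \Cref{lemma:comsupp}, \Cref{lemma:finicovpro}, \Cref{lemma:prored} already established, plus the elementary fact that the support of a finite convex combination of probability measures is the union of their supports (which holds because all mixture weights are strictly positive). The only point that requires a moment's care is confirming that the mixture $P$ still lies in $\PR_\Phi$, i.e. that the integrability condition $\langle P, \|\Phi\|_1\rangle < +\infty$ is preserved under finite mixtures — but this is immediate from linearity of the expectation and finiteness of each summand.
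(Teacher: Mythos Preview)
Your proof is correct and follows essentially the same approach as the paper: take the witnessing measures $P_i\in\Hh$ with $\supp P_i=S_i$, form their convex combination, and observe that this lies in $\Hh$ with support equal to the (closed, hence already equal to its closure) union $\bigcup_i S_i$; the second assertion then follows from \Cref{lemma:prored} exactly as you describe. The paper works directly with $N$ sets and appeals to convexity of $\Hh$ rather than routing through $\Hh_S$ and \Cref{lemma:Hsfic}/\Cref{lemma:comsupp}, but the substance is identical.
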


As a first application of what we have just introduced, we can now prove \Cref{prop:genspan}.

\begin{manual}{\Cref{prop:genspan}}
    Let $\Hh$ be a properly constrained hypothesis on $\X$, with $\Phi:\X\to\R^m$ and $\Phi':\X\to\R^{m'}$ two constraints. Then, $\Span\Phi = \Span\Phi'$. 
\end{manual}
\begin{proof}
    If $\X$ is finite, the result is a direct consequence of \Cref{lemma:finspan}. So, consider the case where $\X$ has infinitely many elements. Clearly, it is enough to show that $\Span\Phi'\subseteq\Span\Phi$, or equivalently that, for any $\phi\in\Span\Phi'$, $\Span\Phi\cup\{\phi\}$ is not a linearly independent family. 
    
    Fix $\phi\in\Span\Phi'$. Let $d$ be the dimension of $\Span\Phi$. By \Cref{lemma:ludovic}, it is enough to show that for any $S\subseteq\X$ with $d+1$ elements, $V_S = \lin(\Span\Phi\big|_S\cup\{\phi\big|_S\})$ has dimension at most $d$. So, fix any $S\subseteq\X$ with $d+1$ elements. By \Cref{lemma:finprocont}, there is a finite matching set $S'\supseteq S$. Since $\Hh_{S'}$ is a properly constrained hypothesis on $S'$ (\Cref{lemma:comsupp}) and both $\Phi\big|_{S'}$ and $\Phi'\big|_{S'}$ are constraints for it (\Cref{lemma:Hsfic}), we have that $\Span\Phi\big|_{S'} = \Span\Phi'\big|_{S'}$ by \Cref{lemma:finspan}, because $S'$ has finite cardinality. As $S\subseteq S'$, $\phi\big|_{S}\in\Span\Phi\big|_{S}$. So, $\dim V_S= \dim (\Span\Phi\big|_{S})\leq \dim (\Span\Phi) = d$.
\end{proof}

\section{Optimality of the dual e-class}\label{sec:optimal}
At the end of this section we finally prove \Cref{thm:dualopt}. First, we will establish the result for the case where $\X$ is compact, then extend it to the general case of a closed $\X\subseteq\R^n$. We introduce the following notation. For any $\Phi:\X\to\R^m$ and any $S\subseteq\X$, we define the set $$\Lambda_{\Phi|S} = \left\{\lambda\in\R^m\;:\;\sup_{x\in S}\lambda\cdot\Phi(x)\leq 1\right\}\,.$$ We state here two elementary properties of these sets. 

\begin{lemma}\label{lemma:intlbd}
    For any sequence $(S_i)_{i\geq 1}$ of subsets of $\X$, $\bigcap_{i=1}^\infty \Lambda_{\Phi|S_i} = \Lambda_{\Phi|S} $, with $S = \bigcup_{i=1}^\infty S_i$. 
\end{lemma}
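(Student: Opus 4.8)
The plan is to unwind the definition of $\Lambda_{\Phi|S}$ into a pointwise condition and then use that union commutes with universal quantification. The key observation is that for any set $T\subseteq\X$ the inequality $\sup_{x\in T}\lambda\cdot\Phi(x)\leq 1$ holds if and only if $\lambda\cdot\Phi(x)\leq 1$ for \emph{every} $x\in T$; this is just the order-theoretic characterisation of the supremum of a set of reals, and it remains valid in the degenerate case $T=\varnothing$, since then $\sup_{x\in\varnothing}\lambda\cdot\Phi(x)=-\infty\leq 1$ while the pointwise condition is vacuously true. Consequently, $\lambda\in\Lambda_{\Phi|S}$ is equivalent to the statement that $\lambda\cdot\Phi(x)\leq 1$ for all $x\in S$.

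The second step is to rewrite this using $S=\bigcup_{i\geq 1}S_i$: the condition ``$\lambda\cdot\Phi(x)\leq 1$ for all $x\in S$'' is equivalent to ``for every $i\geq 1$ and every $x\in S_i$, $\lambda\cdot\Phi(x)\leq 1$'', which, applying the pointwise characterisation in the reverse direction to each $S_i$, is equivalent to ``$\lambda\in\Lambda_{\Phi|S_i}$ for every $i\geq 1$'', i.e.\ $\lambda\in\bigcap_{i\geq 1}\Lambda_{\Phi|S_i}$. Chaining the two chains of equivalences yields $\Lambda_{\Phi|S}=\bigcap_{i\geq 1}\Lambda_{\Phi|S_i}$, which is the claim.

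There is essentially no obstacle in this argument: it is a purely formal manipulation, and the only point requiring a moment's care is the empty-set edge case, which the convention $\sup\varnothing=-\infty$ disposes of. (One could equally avoid even mentioning it by noting $\Lambda_{\Phi|\varnothing}=\R^m$.)
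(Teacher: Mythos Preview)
Your proof is correct and follows essentially the same approach as the paper: both arguments reduce the supremum condition defining $\Lambda_{\Phi|T}$ to the pointwise requirement $\lambda\cdot\Phi(x)\leq 1$ for all $x\in T$, and then observe that this holds on $S=\bigcup_i S_i$ if and only if it holds on each $S_i$. The paper's version is slightly terser and omits the empty-set discussion, but the substance is identical.
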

\begin{proof}
    For all $i\geq1$, $S\supseteq S_i$, so $\Lambda_{\Phi|S}\subseteq\bigcap_{i=1}^\infty\Lambda_{\Phi|S_i}$. Moreover, if $\lambda\in\bigcap_{i=1}^\infty \Lambda_{\Phi|S_i}$, then we have $ \lambda\cdot\Phi(x)\leq1$ for every $x\in S_i$, for any $i\geq 1$. Therefore, $\lambda\cdot\Phi(x)\leq 1$ for all $x\in S$, so $\lambda\in\Lambda_{\Phi|S}$.
\end{proof}

\begin{lemma}\label{lemma:LFclo}
    If $\Phi$ is continue, $\Lambda_{\Phi|S}$ is closed, for any $S\subseteq\X$. 
\end{lemma}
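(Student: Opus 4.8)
The plan is to show that $\Lambda_{\Phi|S}$ is closed by writing it as an intersection of closed sets indexed by the points of $S$. Fix a continuous $\Phi:\X\to\R^m$ and a subset $S\subseteq\X$. For each $x\in S$, consider the set $H_x = \{\lambda\in\R^m : \lambda\cdot\Phi(x)\leq 1\}$. Since $\Phi(x)$ is a fixed vector in $\R^m$, the map $\lambda\mapsto\lambda\cdot\Phi(x)$ is a continuous (indeed linear) function $\R^m\to\R$, so $H_x$ is the preimage of the closed half-line $(-\infty,1]$ under a continuous map, hence closed (it is a closed half-space when $\Phi(x)\neq 0$, and either all of $\R^m$ or empty — actually all of $\R^m$ since $0\cdot\Phi(x)=0\leq 1$ — when $\Phi(x)=0$).

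Next I would observe that, by the very definition of $\Lambda_{\Phi|S}$, a vector $\lambda$ belongs to it precisely when $\lambda\cdot\Phi(x)\leq 1$ for every $x\in S$, i.e.\ when $\lambda\in H_x$ for all $x\in S$. Hence $\Lambda_{\Phi|S} = \bigcap_{x\in S} H_x$. An arbitrary intersection of closed sets is closed, so $\Lambda_{\Phi|S}$ is closed. Note that continuity of $\Phi$ is not even strictly needed for this argument — measurability of each $\Phi(x)$ as a vector is automatic — but stating it with $\Phi$ continuous matches how the lemma will be used, and in any case $\Phi$ is assumed continuous in all the constraint definitions.

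There is essentially no obstacle here: the only mild subtlety is the passage from "$\sup_{x\in S}\lambda\cdot\Phi(x)\leq 1$" to "$\lambda\cdot\Phi(x)\leq 1$ for all $x\in S$", which is immediate since the supremum of a set of reals is $\leq 1$ if and only if every element is $\leq 1$ (this even handles $S=\varnothing$ correctly, in which case $\Lambda_{\Phi|S}=\R^m$, still closed, consistent with $\sup\varnothing=-\infty\leq 1$). So the proof is a two-line argument: rewrite the defining inequality pointwise, recognize each pointwise constraint as a closed half-space (or the whole space), and invoke closedness of arbitrary intersections of closed sets.

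\begin{proof}
    Fix $S\subseteq\X$. For $x\in S$, since $\lambda\mapsto\lambda\cdot\Phi(x)$ is a continuous function of $\lambda\in\R^m$ (the vector $\Phi(x)$ being fixed), the set $H_x = \{\lambda\in\R^m : \lambda\cdot\Phi(x)\leq 1\}$ is closed, being the preimage of $(-\infty, 1]$ under a continuous map. By definition, $\lambda\in\Lambda_{\Phi|S}$ if and only if $\sup_{x\in S}\lambda\cdot\Phi(x)\leq 1$, which holds precisely when $\lambda\cdot\Phi(x)\leq 1$ for every $x\in S$, that is, when $\lambda\in\bigcap_{x\in S}H_x$. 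Hence $\Lambda_{\Phi|S} = \bigcap_{x\in S}H_x$ is an intersection of closed sets, and therefore closed.
\end{proof}
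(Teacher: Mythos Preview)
Your proof is correct. The paper's one-line argument invokes the lower semi-continuity of $\lambda\mapsto\sup_{x\in S}\lambda\cdot\Phi(x)$ (a supremum of functions continuous in $\lambda$) and then uses that sublevel sets of lower semi-continuous functions are closed; your argument is the unpacked version of exactly this fact, writing the sublevel set explicitly as $\bigcap_{x\in S}H_x$. The two routes are equivalent at the level of ideas, yours being slightly more self-contained, and your remark that continuity of $\Phi$ in $x$ plays no role is a valid side observation.
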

\begin{proof}
    It follows from the lower semi-continuity of the supremum of continuous functions.
\end{proof}
From \Cref{lemma:LFclo}, if $\Phi$ is a constraint of a finitely generated hypothesis, $\Lambda_{\Phi}$ is closed. If $\Phi$ is a minimal constraint of a properly generated hypothesis, a stronger result holds: $\Lambda_\Phi$ is compact.
\begin{lemma}\label{lemma:mincomp}   
    Let $\Hh$ be a properly constrained hypothesis on $\X$ and $\Phi$ a constraint. Then $\Phi$ is minimal, if, and only if, $\Lambda_\Phi$ is compact. More precisely, $\Phi$ is minimal if, and only if, there is a finite matching set $S^\star$ such that $\Lambda_{\Phi|S^\star}$ is compact. If such $S^\star$ exists, $\Lambda_{\Phi|S}$ is compact for any $S\subseteq\X$ that contains $S^\star$, and any closed set $S\subseteq \X$ that contains $S^\star$ is a matching set.
\end{lemma}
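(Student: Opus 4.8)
The plan is to deduce every part of the statement from a single core equivalence --- ``$\Phi$ is minimal $\iff\Lambda_\Phi$ is compact'' --- together with the restriction machinery of \Cref{sec:matching}. The first observation I would record is that, since a linear functional has the same supremum over a set and over its convex hull, for every $S\subseteq\X$ one has $\Lambda_{\Phi|S}=\{\lambda\in\R^m:\lambda\cdot y\le1\text{ for all }y\in\conv\Phi(S)\}$; i.e.\ $\Lambda_{\Phi|S}$ is the polar of $\conv\Phi(S)$. Combined with \Cref{lemma:LFclo}, this yields a ``ball lemma'': if $\conv\Phi(S)$ contains a neighbourhood of the origin, say $\|y\|\le\rho\Rightarrow y\in\conv\Phi(S)$, then testing an arbitrary nonzero $\lambda\in\Lambda_{\Phi|S}$ against $\rho\lambda/\|\lambda\|$ forces $\|\lambda\|\le1/\rho$, so $\Lambda_{\Phi|S}$ is bounded and closed, hence compact.

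I would then prove the core equivalence for $\Hh$ and $\Phi$. ``$\Phi$ minimal $\Rightarrow\Lambda_\Phi$ compact'' is immediate: minimality means $0\in\inter\Cc=\inter\conv\Phi(\X)$, so $\conv\Phi(\X)$ contains a neighbourhood of the origin and the ball lemma applies. For the converse I use \Cref{lemma:prominiff}: by \Cref{lemma:finicovpro} the constraint $\Phi$ is automatically proper, so if $\Phi$ is not minimal its components are linearly dependent, i.e.\ $\lambda_0\cdot\Phi\equiv0$ on $\X$ for some $\lambda_0\ne0$; then the whole line $\{t\lambda_0:t\in\R\}$ sits inside $\Lambda_\Phi$, which is therefore unbounded, hence not compact.

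For the sharper equivalence, the substantive direction --- and the step I expect to be the main obstacle --- is ``$\Phi$ minimal $\Rightarrow$ there is a finite matching set $S^\star$ with $\Lambda_{\Phi|S^\star}$ compact''. Here I would extract a finite set by a Carathéodory argument: from $0\in\inter\conv\Phi(\X)$, after shrinking $\rho$ we get $\pm\rho e_1,\dots,\pm\rho e_m\in\conv\Phi(\X)$, and each of these $2m$ points is a convex combination of at most $m+1$ elements of $\Phi(\X)$; choosing one preimage in $\X$ of each element appearing produces a finite $F\subseteq\X$ with $\conv\Phi(F)\supseteq\conv\{\pm\rho e_i:1\le i\le m\}$, hence $0\in\inter\conv\Phi(F)$. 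Then $\Phi\big|_F$ is a constraint for $\Hh_F$ (\Cref{lemma:Hsfic}), and it is proper --- indeed minimal --- so $\Hh_F$ is properly constrained and $F$ is a matching set (\Cref{lemma:comsupp}); moreover $\Lambda_{\Phi|F}$ is compact by the ball lemma, so $S^\star=F$ works. (Alternatively, one could replace $F$ by a finite matching set containing it via \Cref{lemma:finprocont}.) For the reverse direction, if such $S^\star$ exists then $\Hh_{S^\star}$ is properly constrained (\Cref{lemma:comsupp}) with constraint $\Phi\big|_{S^\star}$ (\Cref{lemma:Hsfic}), whose associated $\Lambda$-set is exactly $\Lambda_{\Phi|S^\star}$; applying the core equivalence already proved, now over the closed domain $S^\star$, shows $\Phi\big|_{S^\star}$ is minimal, so by \Cref{lemma:prominiff} its components are linearly independent on $S^\star$ and a fortiori on $\X$, whence $\Phi$ is minimal.

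Finally, for the last sentence fix such an $S^\star$ and any $S\subseteq\X$ with $S\supseteq S^\star$. Enlarging the domain only shrinks the $\Lambda$-set, so $\Lambda_{\Phi|S}\subseteq\Lambda_{\Phi|S^\star}$, and $\Lambda_{\Phi|S}$ is closed (\Cref{lemma:LFclo}); being a closed subset of a compact set, it is compact. If moreover $S$ is closed, it contains $\supp P$ for the $P\in\Hh$ witnessing that $S^\star$ is matching, so $S$ is compatible; and, by the reverse direction above, $0\in\inter\conv\Phi(S^\star)\subseteq\inter\conv\Phi(S)$, so $\Phi\big|_S$ is a minimal --- in particular proper --- constraint for $\Hh_S$ (\Cref{lemma:Hsfic}), making $\Hh_S$ properly constrained and hence $S$ a matching set (\Cref{lemma:comsupp}). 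Beyond the Carathéodory extraction, everything is routine bookkeeping with the results of \Cref{sec:PCH} and \Cref{sec:matching}.
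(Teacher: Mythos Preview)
Your proof is correct, but the route is genuinely different from the paper's. The paper does not use the polar/ball observation at all. For the key direction ``$\Phi$ minimal $\Rightarrow$ finite matching $S^\star$ with $\Lambda_{\Phi|S^\star}$ compact'', the paper proceeds through linear independence: by \Cref{lemma:prominiff} the components $\Phi_1,\dots,\Phi_m$ are independent, so \Cref{lemma:ludovic} produces a finite $s\subseteq\X$ on which they remain independent; this $s$ is then enlarged to a finite matching set $S^\star$ via \Cref{lemma:finprocont}. Boundedness of $\Lambda_{\Phi|S^\star}$ is obtained not from a ball, but from the e-variable bound of \Cref{lemma:finicovpro} (which forces $\lambda\cdot\Phi(x)\in[1-a,1]$ for all $x\in S^\star$) combined with the linear-algebraic \Cref{lemma:boundedL}. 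Your Carath\'eodory extraction bypasses all of this: you locate a finite $F$ with $0\in\inter\conv\Phi(F)$ directly, which makes $F$ matching without appealing to \Cref{lemma:finprocont}, and the ball lemma gives compactness without \Cref{lemma:boundedL} or the e-variable bound. Your argument is shorter and more geometric, and it makes transparent that what is really at stake is the polar duality between $\conv\Phi(S)$ and $\Lambda_{\Phi|S}$; the paper's argument, on the other hand, reuses the auxiliary lemmas (\Cref{lemma:ludovic}, \Cref{lemma:boundedL}, \Cref{lemma:finprocont}) that it needs elsewhere, so it fits more tightly into the surrounding machinery. For the ``not minimal $\Rightarrow$ not compact'' direction and for the final sentence about supersets of $S^\star$, the two proofs coincide.
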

The proof of \Cref{lemma:mincomp} (detailed in \Cref{app:proofs}) builds on the characterisation of minimal constraints that we stated in \Cref{lemma:prominiff}. An immediate corollary of the above is the following.
\begin{corollary}\label{cor:seccomp}
    If $\Hh$ is a properly constrained hypothesis on $\X$, 
    any sequence $(E_i)_{i\geq 1}\subseteq\Ee_\Hh^\vee$ has a subsequence that converges point-wise on $\X$ to some $E\in\Ee_\Hh^\vee$.
\end{corollary}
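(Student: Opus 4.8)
The plan is to reduce the statement to the compactness of $\Lambda_\Phi$ for a \emph{minimal} constraint. First I would invoke \Cref{lemma:prominc} to fix a minimal constraint $\Phi:\X\to\R^m$ for $\Hh$. By the discussion following \Cref{def:dual} (together with \Cref{prop:genspan}), the dual e-class $\Ee_\Hh^\vee$ does not depend on the choice of constraint, so we are free to describe it using this particular $\Phi$: every element of $\Ee_\Hh^\vee$ is of the form $1-\lambda\cdot\Phi$ for some $\lambda\in\Lambda_\Phi$.

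Next, for each $i\geq1$ I would pick $\lambda_i\in\Lambda_\Phi$ with $E_i=1-\lambda_i\cdot\Phi$. Since $\Phi$ is minimal, \Cref{lemma:mincomp} guarantees that $\Lambda_\Phi$ is compact; in particular it is a bounded subset of $\R^m$, so by Bolzano--Weierstrass the sequence $(\lambda_i)_{i\geq1}$ admits a subsequence $(\lambda_{i_k})_{k\geq1}$ converging to some $\lambda\in\R^m$. As $\Lambda_\Phi$ is compact, hence closed, $\lambda\in\Lambda_\Phi$.

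Finally I would set $E=1-\lambda\cdot\Phi$, which lies in $\Ee_\Hh^\vee$ by definition. For every fixed $x\in\X$ the map $\mu\mapsto\mu\cdot\Phi(x)$ is linear (so continuous) on $\R^m$, whence $E_{i_k}(x)=1-\lambda_{i_k}\cdot\Phi(x)\to 1-\lambda\cdot\Phi(x)=E(x)$. Thus $(E_{i_k})_{k\geq1}$ converges point-wise on $\X$ to $E\in\Ee_\Hh^\vee$, as claimed. There is no substantive obstacle here once \Cref{lemma:mincomp} is available; the only point requiring care is that $\Ee_\Hh^\vee$ is genuinely constraint-independent, so that replacing an arbitrary constraint by a minimal one is legitimate.
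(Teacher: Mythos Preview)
Your proof is correct and follows essentially the same approach as the paper: both fix a minimal constraint via \Cref{lemma:prominc}, use the compactness of $\Lambda_\Phi$ from \Cref{lemma:mincomp} to extract a convergent subsequence $(\lambda_{i_k})$, and then deduce point-wise convergence of $E_{i_k}=1-\lambda_{i_k}\cdot\Phi$ to $1-\lambda\cdot\Phi\in\Ee_\Hh^\vee$. Your explicit remark that $\Ee_\Hh^\vee$ is constraint-independent (so that passing to a minimal constraint is legitimate) is a nice touch, though the paper leaves this implicit.
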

\begin{proof}
    By \Cref{lemma:prominc} we can fix a minimal constraint $\Phi$ for $\Hh$. By \Cref{lemma:mincomp}, $\Lambda_\Phi$ is compact. For any $(E_i)_{i\geq 1}\subseteq\Ee_\Hh^\vee$, we consider a corresponding sequence $(\lambda_i)_{i\geq 1}\subseteq\Lambda_\Phi$, such that $E_i = 1-\lambda_i\cdot\Phi$ for all $i\geq1$. The existence of a subsequence of $(\lambda_i)_{i\geq 1}$ converging to some $\lambda^\star\in\Lambda_\Phi$ implies that there is a subsequence of $(E_i)_{i\geq1}$ that converges point-wise to $1-\lambda^\star\cdot\Phi$, which is in $\Ee_\Hh^\vee$.
\end{proof}
Before proving the optimality of the dual e-class, we establish one last preliminary result.
\begin{lemma}\label{lemma:Dilemma}
    Let $\Hh$ be a properly constrained hypothesis on $\X$. Fix any countable set $S\subseteq \X$. Then, for each $E\in\Ee_\Hh$ we can find $E'\in\Ee_{\Hh}^\vee$ such that $E'(x)\geq E(x)$ for every $x\in S$.
\end{lemma}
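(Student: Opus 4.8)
\textbf{Proof plan for \Cref{lemma:Dilemma}.}

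The plan is to reduce the claim to the finite-cardinality case already established in \Cref{prop:findualopt}, using the matching-set machinery of \Cref{sec:matching} together with the compactness of $\Lambda_\Phi$ for a minimal constraint (\Cref{lemma:mincomp}) and the point-wise sequential compactness of $\Ee_\Hh^\vee$ (\Cref{cor:seccomp}). First I would fix a minimal constraint $\Phi$ for $\Hh$ (available by \Cref{lemma:prominc}) and enumerate $S = \{x_1, x_2, \dots\}$ (the case $S$ finite being handled directly along the way). Write $S_k = \{x_1,\dots,x_k\}$, and by \Cref{lemma:finprocont} choose a finite matching set $T_k \supseteq S_k$; we may take $T_k$ increasing. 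Then $\Hh_{T_k}$ is a properly constrained hypothesis on the finite set $T_k$ with constraint $\Phi\big|_{T_k}$ (\Cref{lemma:comsupp,lemma:Hsfic}).

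Next, given $E \in \Ee_\Hh$, its restriction $E\big|_{T_k}$ lies in $\Ee_{\Hh_{T_k}}$, since $\langle P, E\rangle \le 1$ for every $P \in \Hh_{T_k} \subseteq \Hh$. By \Cref{prop:findualopt} the dual e-class $\Ee_{\Hh_{T_k}}^\vee$ is optimal, hence majorising, so there is $\lambda_k \in \Lambda_{\Phi|T_k}$ with $1 - \lambda_k\cdot\Phi(x) \ge E(x)$ for all $x \in T_k$, in particular for all $x \in S_k$. Now the key point: by \Cref{lemma:mincomp}, since $\Phi$ is minimal and $T_k$ contains a finite matching set (enlarging $T_1$ if necessary so that it contains the $S^\star$ of that lemma, which we may assume by monotonicity), $\Lambda_{\Phi|T_k}$ is compact, and moreover $\Lambda_\Phi = \bigcap_k \Lambda_{\Phi|T_k}$ is compact by \Cref{lemma:intlbd} with $\bigcup_k T_k \supseteq S$ — but more importantly each $\lambda_k$ lies in the fixed compact set $\Lambda_{\Phi|T_1}$. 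So $(\lambda_k)_{k\ge1}$ has a subsequence converging to some $\lambda^\star$; since for every fixed $j$ we have $\lambda_k \in \Lambda_{\Phi|T_j}$ for all $k \ge j$ and $\Lambda_{\Phi|T_j}$ is closed (\Cref{lemma:LFclo}), the limit $\lambda^\star$ lies in every $\Lambda_{\Phi|T_j}$, hence in $\Lambda_\Phi$ by \Cref{lemma:intlbd}. Set $E' = 1 - \lambda^\star\cdot\Phi \in \Ee_\Hh^\vee$. For each fixed $x_j \in S$, passing to the limit along the subsequence in the inequality $1 - \lambda_k\cdot\Phi(x_j) \ge E(x_j)$ (valid for all $k \ge j$) gives $E'(x_j) \ge E(x_j)$, which is the claim.

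The main obstacle — and the reason the argument is not entirely immediate — is the diagonal/compactness step: one must be careful that the $\lambda_k$ obtained from the finite restrictions all live in a \emph{single} compact set so that a convergent subsequence exists, and that the limit still satisfies the constraint on \emph{all} of $S$, not just on finitely many $S_k$. This is exactly what \Cref{lemma:mincomp} (compactness of $\Lambda_{\Phi|S}$ once $S$ contains a finite matching set) and the closedness of the $\Lambda_{\Phi|T_j}$ together with \Cref{lemma:intlbd} are designed to provide, so the pieces are in place; the care is in assembling them in the right order. A minor point to handle cleanly is the degenerate possibility that $T_1$ does not yet contain a finite matching set witnessing minimality — but by \Cref{lemma:finprocont} we can always enlarge the first few $T_k$ to a finite matching set containing $S^\star$, so without loss of generality every $T_k$ is such a set and the compactness applies from the start. (If $S$ is finite the whole argument collapses to a single application of \Cref{prop:findualopt} to a finite matching set containing $S$.)
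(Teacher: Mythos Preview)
Your argument has a genuine gap in the final step, where you conclude that $\lambda^\star\in\Lambda_\Phi$. You write that ``$\lambda^\star$ lies in every $\Lambda_{\Phi|T_j}$, hence in $\Lambda_\Phi$ by \Cref{lemma:intlbd}'', and earlier that ``$\Lambda_\Phi = \bigcap_k \Lambda_{\Phi|T_k}$''. But \Cref{lemma:intlbd} only gives $\bigcap_k \Lambda_{\Phi|T_k} = \Lambda_{\Phi|\bigcup_k T_k}$, and your $\bigcup_k T_k$ is merely some countable set containing $S$; there is no reason it should be all of $\X$, or even dense in $\X$. Consequently you only obtain $\lambda^\star\in\Lambda_{\Phi|\bigcup_k T_k}$, which does not rule out $\lambda^\star\cdot\Phi(x)>1$ at some $x\in\X\setminus\bigcup_k T_k$. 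In that case $E'=1-\lambda^\star\cdot\Phi$ would take a negative value and fail to belong to $\Ee_\Hh^\vee$.

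The paper's proof is otherwise essentially the same as yours (minimal constraint, increasing finite matching sets, apply \Cref{prop:findualopt} on each, extract a convergent subsequence via compactness from \Cref{lemma:mincomp}), but it closes this gap with one extra ingredient: instead of enumerating only $S$, it starts from a countable \emph{dense} sequence $(x_i)_{i\ge1}\subseteq\X$ that contains $S$. Then the union $D=\bigcup_i S_i$ is dense in $\X$, and since $\lambda^\star\cdot\Phi$ is continuous and bounded by $1$ on the dense set $D$, it is bounded by $1$ on all of $\X$, giving $\lambda^\star\in\Lambda_\Phi$. Your proof is easily repaired by inserting this density step at the outset.
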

\begin{proof}
    By \Cref{lemma:prominc} we can fix a minimal constraint $\Phi$ for $\Hh$. By \Cref{lemma:mincomp}, there is a finite matching set $S'_0$ such that $\Lambda_{\Phi|S'_0}$ is compact. Consider a sequence $(x_i)_{i\geq 1}\subseteq\X$, dense in $\X$ and containing $S$. By \Cref{lemma:finprocont}, for all $i\geq1$ we have a finite matching set $S'_i$ containing $x_i$. For each $i\geq 1$, let $S_i = \bigcup_{j=0}^iS'_i$. These sets are finite matching sets by \Cref{lemma:finprocont}. Let $D = \bigcup_{i=1}^\infty S_i$. 
    
    Fix $E\in\Ee_\Hh$. 
    For all $i\geq 1$, $\Hh_{S_i}$ is a properly constrained on $S_i$ (\Cref{lemma:comsupp}). By \Cref{prop:findualopt} there is $\lambda_i\in\Lambda_{\Phi|S_i}$ such that $E_{\lambda_i}(x)\geq E(x)$, for all $x\in S_i$, where we let $E_\lambda = 1-\lambda\cdot\Phi$ for $\lambda\in\R^m$. Since each $\lambda_i$ is in $\Lambda_{\Phi|S_0'}$, which is compact by construction, $(\lambda_i)_{i\geq 1}$ has a subsequence $(\lambda^\star_i)_{i\geq 1}$ converging to some $\lambda^\star\in\Lambda_{\Phi|S_0'}$. Note that $(S_i)_{i\geq 1}$ is non-decreasing under set inclusion, namely $S_i\subseteq S_{i+1}$ for all $i\geq1$. In particular, we can consider the subsequence $(S_i^\star)_{i\geq 1}$ of $(S_i)_{i\geq 1}$, obtained by selecting those indices that had been used to generate $(\lambda^\star_i)_{i\geq 1}$, and still $D = \bigcup_{i=1}^\infty S^\star_i$. For any $i\geq 1$ and $j\geq i$, $\lambda^\star_j\in\Lambda_{\Phi|S^\star_i}$, so $\lambda^\star\in\bigcap_{i=1}^\infty\Lambda_{\Phi|S^\star_{i}}=\Lambda_{\Phi|D}$, by \Cref{lemma:intlbd}. Clearly, the convergence of $(\lambda^\star_{i})_{i\geq1}$ to $\lambda^\star$ implies that $(\lambda_{i}^\star\cdot \Phi)_{i\geq1}$ converges point-wise to $\lambda^\star\cdot\Phi$ on $\X$. Fix $k\geq 1$. For any $i\geq k$, $E_{\lambda^\star_{i}}(x)\geq E(x)$ if $x\in S^\star_{k}$, since $S^\star_{k}\subseteq S^\star_{i}$. So, for all $x\in S^\star_k$, $E_{\lambda^\star}(x) = \lim_{i\to\infty} E_{\lambda^\star_i}(x) \geq E(x)$. This holds for any $k\geq 1$, so $E_{\lambda^\star}(x)\geq E(x)$ for all $x\in S\subseteq D$. 

    To conclude we only need to show that $E_{\lambda^\star}\in\Ee_\Hh^\vee$. Since $\lambda^\star\in\Lambda_{\Phi|D}$, we have that $\lambda^\star\cdot\Phi(x)\leq 1$ for all $x\in D$. As $D\supseteq(x_i)_{i\geq1}$ is dense in $\X$ and $\lambda^\star\cdot\Phi$ is continuous, we deduce that $\lambda^\star\in\Lambda_\Phi$.
\end{proof}

\begin{proposition}\label{prop:compdualopt}
    Assume that $\X\subseteq\R^n$ is compact. Let $\Hh$ be a properly constrained hypothesis on $\X$. Then, the optimal e-class exists and coincides with the dual e-class $\Ee_\Hh^\vee$.
\end{proposition}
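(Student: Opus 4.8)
The strategy is to reduce to the finite-cardinality case (Proposition \ref{prop:findualopt}) via a compactness/limiting argument, exactly as set up by the machinery in Sections \ref{sec:matching} and \ref{sec:optimal}. By Lemma \ref{lemma:maxopt} it suffices to show two things: first, that $\Ee_\Hh^\vee$ is a majorising e-class (every e-variable is dominated by some element of $\Ee_\Hh^\vee$); second, that $\Ee_\Hh^\vee$ consists exactly of the maximal e-variables, so that it is optimal. The second point is almost done: Lemma \ref{lemma:dualmax} already gives that every element of $\Ee_\Hh^\vee$ is maximal, and once we know $\Ee_\Hh^\vee$ is majorising, Lemma \ref{lemma:maxopt} tells us the optimal e-class exists and equals the set of all maximal e-variables; combining with majorisation and maximality of $\Ee_\Hh^\vee$ forces $\Ee_\Hh^\vee$ to be precisely that set. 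So the real content is: \emph{$\Ee_\Hh^\vee$ is majorising}.

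To prove majorisation when $\X$ is compact, I would fix an arbitrary $E\in\Ee_\Hh$ and build $E'\in\Ee_\Hh^\vee$ with $E'\succeq E$ on all of $\X$, not just on a countable subset. The key is to upgrade Lemma \ref{lemma:Dilemma}: that lemma gives, for any countable $S\subseteq\X$, an $E'\in\Ee_\Hh^\vee$ dominating $E$ pointwise on $S$. Take $S = (x_i)_{i\ge1}$ a countable dense subset of $\X$ (possible since $\X\subseteq\R^n$ is separable), and let $E' = 1-\lambda^\star\cdot\Phi$ be the resulting dual e-variable, where $\Phi$ is a minimal constraint (Lemma \ref{lemma:prominc}). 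Then $E'(x)\ge E(x)$ holds for every $x$ in the dense set $S$. Now I would use continuity: $E' = 1-\lambda^\star\cdot\Phi$ is continuous on $\X$ because $\Phi$ is continuous. If $E$ were also continuous, the inequality $E'\ge E$ would extend from the dense set $S$ to all of $\X$ by continuity, finishing the proof. The subtlety is that a generic e-variable $E\in\Ee_\Hh$ need only be Borel, not continuous, so one cannot simply pass to the closure of $S$.

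The main obstacle is therefore exactly this lack of continuity of $E$. I see two ways around it. One option: for each point $x_0\in\X$ add $x_0$ to the countable dense set — i.e., apply Lemma \ref{lemma:Dilemma} with $S = \{x_0\}\cup(x_i)_{i\ge1}$ — obtaining some $E'_{x_0}\in\Ee_\Hh^\vee$ with $E'_{x_0}\succeq E$ on that set; but this produces a different dual e-variable for each $x_0$, and one must then extract a single limit. Using Corollary \ref{cor:seccomp} (sequential compactness of $\Ee_\Hh^\vee$ under pointwise convergence, which rests on compactness of $\Lambda_\Phi$ from Lemma \ref{lemma:mincomp}), one can enumerate $\X$'s points as needed, take a diagonal subsequence, and pass to a pointwise limit $E'\in\Ee_\Hh^\vee$ that dominates $E$ everywhere; the compactness of $\Lambda_\Phi$ is what makes the limit stay inside $\Ee_\Hh^\vee$. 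A cleaner option, which I would try first: observe that since $\Hh$ is properly constrained, by Lemma \ref{lemma:finprocont} every point $x_0$ lies in a finite matching set, so Lemma \ref{lemma:Dilemma} applied with countable dense $S$ plus the finitely many points of that matching set, combined with the fact that on a finite matching set Proposition \ref{prop:findualopt} already gives exact domination by a dual e-variable, may let one argue that the single $\lambda^\star$ obtained from the dense set $S$ in fact works at $x_0$ too — because the value $E'(x_0)$ of the limiting dual e-variable is forced by its values on points of $S$ accumulating at $x_0$ together with the constraint structure. Either way, the crux is promoting ``domination on a countable dense set'' to ``domination everywhere,'' and the tool that does it is the compactness of $\Lambda_\Phi$ for a minimal constraint (Lemma \ref{lemma:mincomp}) feeding into the pointwise sequential compactness of $\Ee_\Hh^\vee$ (Corollary \ref{cor:seccomp}), which is precisely why compactness of $\X$ is assumed here and the general closed case is deferred.
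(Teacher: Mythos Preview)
Your overall strategy is right, and you correctly isolate the obstacle: a general $E\in\Ee_\Hh$ is only Borel, so domination on a dense countable $S\subseteq\X$ does not pass to all of $\X$ by continuity. But neither of your proposed fixes closes this gap. In Option A you speak of ``enumerating $\X$'s points as needed'' and running a diagonal subsequence, yet $\X$ is uncountable: any sequence $(y_j)\subseteq\X$ together with your dense $S$ still covers only countably many points, and a diagonal limit $E^\star\in\Ee_\Hh^\vee$ can only be checked to dominate $E$ on that countable set --- you are back where you started. Diagonal arguments handle countably many constraints, not uncountably many. Option B is too vague to constitute an argument; nothing in the constraint structure forces $E'(x_0)\ge E(x_0)$ at a point $x_0$ from knowledge of $E'$ on nearby points when $E$ is merely Borel.

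The paper's argument takes a genuinely different route: instead of a dense subset of $\X$, it uses compactness of $\X$ to make $C(\X)$ (under the sup norm) separable, and takes a countable sequence $(\hat E_i)_{i\ge1}$ \emph{dense in $\Ee_\Hh^\vee$}. If for some $\ee>0$ every $\hat E\in\Ee_\Hh^\vee$ failed to dominate $E+\ee$ at some point, one gets a countable set of witness points $(x_i)$ (one per $\hat E_i$); Lemma \ref{lemma:Dilemma} applied to $(x_i)$ yields an $\hat E\in\Ee_\Hh^\vee$ with $\hat E(x_i)>\hat E_i(x_i)+\ee$ for all $i$, contradicting density. Hence $E$ is dominated up to every $\ee>0$ by some dual e-variable, and a limit via Corollary \ref{cor:seccomp} gives exact domination. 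Compactness of $\X$ enters precisely to secure separability of $C(\X)$. A shorter fix in the spirit of your plan: for each $x\in\X$ the set $C_x=\{\lambda\in\Lambda_\Phi:1-\lambda\cdot\Phi(x)\ge E(x)\}$ is closed in the compact $\Lambda_\Phi$ (Lemma \ref{lemma:mincomp}), and Lemma \ref{lemma:Dilemma} applied to finite sets gives the finite intersection property, so $\bigcap_{x\in\X}C_x\neq\varnothing$; any $\lambda^\star$ in this intersection yields the desired majoriser.
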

\begin{proof}
    Fix $E\in\Ee_\Hh$. Only one of the following mutually contradictory statements is true.
    \begin{enumerate}\setlength{\itemsep}{0pt}
        \item There is $\ee>0$ such that, for every $\hat E\in\Ee_\Hh^\vee$, there is $x\in\X$ such that $E(x)>\hat E(x)+\ee$.
        \item For every $\ee>0$ there is $\hat E\in\Ee_\Hh^\vee$ such that, for all $x\in\X$, $E(x)\leq \hat E(x)+\ee$. 
    \end{enumerate}
    Let us show that the claim (1) is false. First, note that since the space of continuous functions on a compact set in $\R^n$ is a separable metric space  (under the uniform norm), there is a sequence $(\hat E_i)_{i\geq 1} \subseteq\Ee_\Hh^\vee$ that is dense in $\Ee_\Hh^\vee$ (under the uniform convergence). If option (1) holds, there is $\ee>0$ and a sequence $(x_i)_{i\geq 1}\subseteq\X$ such that, for each $i\geq 1$,  $E(x_i)> \hat E_i(x_i)+\ee$. By \Cref{lemma:Dilemma}, there is $\hat E\in\Ee_\Hh^\vee$ such that $\hat E(x_i)\geq E(x_i)> \hat E_i(x_i)+\ee$, for all $i\geq 1$. Since $(\hat E_i)_{i\geq1}$ is dense in $\Ee_\Hh^\vee$, there is an index $i^\star$ such that $\sup_{x\in\X}|\hat E(x) - \hat E_{i^\star}(x)|<\ee$, a contradiction. So, (1) is false.

    As the claim (2) must be true, we can find a sequence $(E_i)_{i\geq 1}\subseteq\Ee_\Hh^\vee$, such that $E\preceq E_i+i^{-1}$ for all $i\geq 1$. By \Cref{cor:seccomp}, there is a subsequence of $(E_i)_{i\geq1}$ that converges point-wise to some $E^\star\in\Ee_\Hh^\vee$. It follows that $E\preceq E^\star$, from which we deduce that $\Ee_\Hh^\vee$ is a majorising e-class. As all its elements are maximal (\Cref{lemma:dualmax}), it is the optimal e-class by \Cref{lemma:maxopt}.
\end{proof}

\begin{manual}{\Cref{thm:dualopt}}
    Let $\Hh$ be a properly constrained hypothesis. Then, the dual e-class $\Ee_\Hh^\vee$ is optimal.
\end{manual}
\begin{proof}
    As all elements of $\Ee_\Hh^\vee$ are maximal (\Cref{lemma:dualmax}), by \Cref{lemma:maxopt} it is enough to show that $\Ee_\Hh^\vee$ is a majorising e-class. Fix $E\in\Ee_\Hh$. By \Cref{lemma:prominc} we can fix a minimal constraint $\Phi$ for $\Hh$. By \Cref{lemma:mincomp}, there is a finite matching set $S^\star\subseteq\X$ such that any closed set $S\subseteq\X$ containing $S^\star$ is a matching set inducing a compact $\Lambda_{\Phi|S}$. As $\X$ is closed, it is sigma-compact, and there is a sequence $(K_i)_{i\geq 1}$ of compact matching sets, such that $S^\star\subseteq K_i\subseteq K_{i+1}$, for all $i\geq 1$, and $\X = \bigcup_{i=1}^\infty K_i$. By construction, $\Hh_{K_i}$ is properly constrained (\Cref{lemma:comsupp}) and $\Phi\big|_{K_i}$ a constraint for it (\Cref{lemma:Hsfic}). From \Cref{prop:compdualopt}, for each $i\geq1$ there is $\lambda_i\in\Lambda_{\Phi|K_i}$ such that $E_{\lambda_i}(x)\geq E(x)$ for all $x\in K_i$, where as usual $E_{\lambda} = 1-\lambda\cdot\Phi$. For all $i\geq1$, because $K_i\subseteq K_{i+1}$ we have $\Lambda_{\Phi|K_i}\supseteq\Lambda_{\Phi|K_{i+1}}$. In particular, for any $j\geq 1$, $(\lambda_i)_{i\geq j}$ is contained in $\Lambda_{\Phi|K_j}$, which is compact by \Cref{lemma:mincomp}. So, there is a subsequence $(\lambda'_i)_{i\geq 1}$ of $(\lambda_i)_{i\geq 1}$ converging to $\lambda^\star\in\bigcap_{j\geq1}\Lambda_{\Phi|K_j}$. Let $(K'_i)_{i\geq 1}$ be the corresponding subsequence of $(K_i)_{i\geq 1}$, obtained selecting the same indices. By \Cref{lemma:intlbd}, $\lambda^\star\in\Lambda_\Phi$, so  $E_{\lambda^\star}\in\Ee_\Hh^\vee$. Fix any $x\in\X$. There is an index $i^\star\geq 1$ such that $x\in K_i'$ for all $i\geq i^\star$. Thus, $E_{\lambda'_i}(x)\geq E(x)$ for all $i\geq i^\star$, and $E_{\lambda^\star}(x)\geq E(x)$. As the choice of $x$ has been arbitrary, $E_{\lambda^\star}\succeq E$. Therefore, $\Ee_\Hh^\vee$ is a majorising e-class.
\end{proof}

\section{Extensions}\label{sec:extensions}

\subsection{Testing finitely (non-properly) constrained hypotheses}\label{sec:finitely}
So far we have mostly focused on those finitely constrained hypotheses $\Hh$ that are also properly constrained. Indeed, if $\Hh$ is not properly constrained, by \Cref{lemma:finicovpro} and \Cref{cor:nonex} the optimal e-class does not exist. However, we can still characterise an ``optimal'' e-variable testing procedures. The key idea is the following result, whose proof is detailed in \Cref{app:proofs}.
\begin{lemma}\label{lemma:X0}
    Let $\Hh$ be a finitely constrained hypothesis on $\X$. Let $\X_0=\bigcup_{P\in\Hh}\supp P$. Then, $\X_0$ is closed and $\Hh$ is a properly constrained hypothesis on $\X_0$.
\end{lemma}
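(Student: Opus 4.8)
The goal is to show that $\X_0 := \bigcup_{P \in \Hh} \supp P$ is closed and that $\Hh$, viewed as a hypothesis on $\X_0$, is properly constrained. Fix a constraint $\Phi : \X \to \R^m$ for $\Hh$, which exists since $\Hh$ is finitely constrained. The first step is to understand $\X_0$ via the convex geometry of $\Cc := \conv \Phi(\X)$. Since $\Hh$ is finitely constrained, $0 \in \Cc$ (take any $P \in \Hh$; then $0 = \langle P, \Phi\rangle \in \conv \Phi(\supp P) \subseteq \Cc$). Because $\Hh$ is \emph{not} assumed properly constrained, $0$ may lie on the relative boundary of $\Cc$; the natural guess, to be verified, is that $x \in \X_0$ if and only if $\Phi(x)$ lies in the (relatively open) face $F$ of $\Cc$ whose relative interior contains $0$ — equivalently $\X_0 = \Phi^{-1}(\aff F \cap \Cc)$ up to the right face. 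Concretely, I expect $\X_0 = \{x \in \X : \Phi(x) \in \Gamma\}$ where $\Gamma$ is the smallest face of $\Cc$ containing $0$ in its relative interior, which is itself a closed convex set; continuity of $\Phi$ and closedness of $\X$ and $\Gamma$ then give that $\X_0$ is closed.

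The membership characterisation is the heart of the argument and splits into two inclusions. For ``$x \in \X_0 \Rightarrow \Phi(x) \in \Gamma$'': if $x \in \supp P$ for some $P \in \Hh$, then every neighbourhood of $x$ has positive $P$-mass, so $\Phi(x)$ is a limit of points in $\conv \Phi(\supp P)$, and since $0 = \langle P,\Phi\rangle$ is a ``barycentre'' of $\Phi$ under $P$, a standard fact about faces (a face containing the barycentre of a measure must contain the support of the pushforward measure) forces $\Phi(x)$ into the minimal face $\Gamma$ of $\Cc$ through $0$. For the converse, ``$\Phi(x) \in \Gamma \Rightarrow x \in \X_0$'': given such $x$, I want $P \in \Hh$ with $x \in \supp P$. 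Since $\Phi(x) \in \Gamma$ and $0 \in \relint \Gamma$, one can write $0$ as a convex combination $0 = t\,\Phi(x) + (1-t)\,c$ with $t \in (0,1)$ and $c \in \Gamma \subseteq \Cc = \conv \Phi(\X)$; by Carathéodory, $c$ is a finite convex combination of points $\Phi(x_1), \dots, \Phi(x_k)$, so $P := t\,\delta_x + (1-t)\sum_j \alpha_j \delta_{x_j}$ is a finitely-supported measure with $\langle P, \Phi\rangle = 0$, hence $P \in \Hh$ (note $\PR_\Phi = \PR_\X$ restricted appropriately since $P$ has finite support), and $x \in \supp P$.

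Once $\X_0$ is identified and shown closed, proper constrainedness on $\X_0$ follows by re-running the convexity bookkeeping relative to $\X_0$: $\Phi\big|_{\X_0}$ is continuous, and $\Hh = \{P \in \PR_{\Phi|_{\X_0}} : \langle P, \Phi\big|_{\X_0}\rangle = 0\}$ because every $P \in \Hh$ already satisfies $\supp P \subseteq \X_0$ by definition, while conversely any $P \in \PR_{\X_0}$ with $\langle P,\Phi\rangle = 0$ lies in $\Hh$. Thus $\Phi\big|_{\X_0}$ is a constraint for $\Hh$ on $\X_0$. It remains to check $0 \in \relint \conv \Phi(\X_0)$. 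But $\conv \Phi(\X_0) = \conv \Phi(\Phi^{-1}(\Gamma)) $; the image $\Phi(\X_0)$ consists exactly of those points of $\Phi(\X)$ lying in $\Gamma$, and since $0 \in \relint \Gamma$ and $\Gamma$ is the minimal face of $\Cc$ containing $0$, one gets $\conv \Phi(\X_0) = \Gamma$ (a face of $\Cc$ is the convex hull of the generators it contains), so $0 \in \relint \conv \Phi(\X_0)$, i.e.\ $\Phi\big|_{\X_0}$ is a proper constraint and $\Hh$ is properly constrained on $\X_0$.

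\textbf{Main obstacle.} The delicate point is the forward inclusion $x \in \X_0 \Rightarrow \Phi(x) \in \Gamma$ in the non-finite case: one is pushing forward a general (possibly non-atomic) measure $P$ on $\X$ to a measure $\Phi_\# P$ on $\Cc$ with barycentre $0$, and must argue that the \emph{topological} support point $x \in \supp P$ maps into the minimal face through the barycentre. This requires care because $\supp(\Phi_\# P)$ need not equal $\Phi(\supp P)$ in general — though $\Phi$ continuous gives $\supp(\Phi_\# P) \subseteq \overline{\Phi(\supp P)}$, which is enough. The clean way is: a measure whose barycentre lies in a face $F$ is supported (as a pushforward) inside $F$; hence $\Phi_\# P(\Cc \setminus \Gamma) = 0$, so $P(\{x : \Phi(x) \notin \Gamma\}) = 0$, and since $\Gamma$ is closed and $\{x : \Phi(x) \notin \Gamma\}$ is open, no point of it can be in $\supp P$; therefore $x \in \supp P$ forces $\Phi(x) \in \Gamma$. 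Getting this measure-theoretic face argument stated at the right level of generality (rather than only for finitely supported $P$) is where I'd expect to spend the most effort, likely invoking \Cref{lemma:prored} or a direct argument to first reduce to finitely supported measures.
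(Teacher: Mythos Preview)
Your approach is correct and takes a genuinely different route from the paper. The paper works with the closure $\bar\X_0$ and directly constructs a fully supported $P^\star\in\Hh$ on $\bar\X_0$, by forming a carefully weighted countable convex combination $P^\star=\sum_i\frac{\alpha_i}{\xi}P_i$ of measures $P_i\in\Hh$ whose supports cover a dense sequence in $\bar\X_0$ (the weights $\alpha_i=2^{-i}/(1+\langle P_i,\|\Phi\|_1\rangle)$ ensure $P^\star\in\PR_\Phi$). Then \Cref{lemma:profull} gives proper constrainedness on $\bar\X_0$, and $\supp P^\star\subseteq\X_0$ forces $\X_0=\bar\X_0$. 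Your route is geometric: you identify $\X_0$ explicitly as $\Phi^{-1}(\Gamma)$ with $\Gamma$ the minimal face of $\Cc$ through $0$, and verify $0\in\relint\conv\Phi(\X_0)$ directly from $\conv\Phi(\X_0)=\Gamma$. This yields a concrete description of $\X_0$ that the paper's proof does not, and avoids both the density argument and the $\PR_\Phi$ bookkeeping; the paper's argument is shorter and leans entirely on lemmas already in hand, sidestepping face theory.

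One caveat: your assertion that $\Gamma$ is closed need not hold, since $\Cc$ itself may fail to be closed. What is true is that $\Gamma=\Cc\cap\aff\Gamma$ (from $0\in\relint\Gamma$ and the face property), whence $\Phi^{-1}(\Gamma)=\Phi^{-1}(\aff\Gamma)$ is closed. The same observation repairs your support argument: $\{x:\Phi(x)\notin\Gamma\}=\{x:\Phi(x)\notin\aff\Gamma\}$ is open. Your barycentre-in-face claim is correct in finite dimensions and follows by induction on dimension via supporting hyperplanes, which is an iterated form of \Cref{lemma:convP}. Note, however, that invoking \Cref{lemma:prored} here would be circular, as that lemma already assumes the hypothesis is properly constrained; the direct argument you sketch is the right one.
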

\Cref{lemma:X0} tells us that any finitely hypothesis $\Hh$ on $\X$ can be seen as a properly constrained hypothesis on some smaller set $\X_0$. By the definition of $\X_0$, we know that observing $x_t\notin\X_0$ is incompatible with the hypothesis $\Hh$, as such $x_t$ is not in the support of any $P\in\Hh$. So, we can design the following testing procedure. Assuming that $\Hh$ holds, we see it as a hypothesis on $\X_0$ and we design a testing game on $\X_0$ (as in \Cref{def:game}) restricted to the optimal e-class of $\Hh$ on $\X_0$. At each new observation $x_t$, if $x_t\notin\X_0$ we automatically reject $\Hh$. Conversely, if $x_t\in\X_0$, we let the player in the testing game observe $x_t$, and as usual we reject $\Hh$ if the cumulative reward exceeds the threshold $\log(1/\delta)$, with $\delta\in(0,1)$ the type II  confidence level.

\subsection{Loosely constrained hypotheses}

Finitely constrained hypotheses involve the equality $\langle P,\Phi\rangle=0$. Here, we relax this condition.

\begin{definition}[Loosely constrained hypotheses]\label{def:loosely}
    A hypothesis $\Hh$ on $\X$ is \emph{loosely constrained} if there are two continuous functions $\Phi':\X\to\R^{m'}$ and $\Phi'':\X\to\R^{m''}$ such that, denoted as $\Phi:\X\to\R^{m'+m''}$ the mapping $x\mapsto\Phi(x) = (\Phi'(x),\Phi''(x))$, we have that $0\in\conv\Phi(\X)$ and
    $$\Hh = \{P\in\PR_{\Phi'}\cap\PR_{\Phi''}\;:\;\langle P, \Phi'\rangle =0\,,\;\langle P,\Phi''\rangle\geq0\}\,,$$ where the inequality holds component-wise. We call $\Phi'$ a \emph{tight} constraint and $\Phi''$ a \emph{slack} constraint. If moreover $0\in\relint(\conv\Phi(\X))$, $\Hh$ is said to be loosely \emph{properly} constrained.
\end{definition}
For any properly loosely constrained hypothesis the optimal e-class exists. This is made explicit in the next proposition (see \Cref{app:loose} for the proof).

\begin{proposition}\label{prop:loosely}
    Let $\Hh$ be a properly loosely constrained hypothesis on $\X$, with $\Phi':\X\to\R^{m'}$ and $\Phi'':\X\to\R^{m''}$ tight and slack constraints.
    Define $$\tilde\Lambda_{\Phi',\Phi''} = \big\{(\lambda',\lambda'')\;:\;\lambda'\in\R^{m'}\,,\;\lambda''\in[0,+\infty)^{m''}\,,\;1-\lambda'\cdot\Phi'-\lambda''\cdot\Phi''\succeq 0\big\}\,.$$ Then, the optimal e-class for $\Hh$ exists and coincides with $$\tilde\Ee_{\Hh}^\vee = \big\{1-\lambda'\cdot\Phi'-\lambda''\cdot\Phi''\;:\;(\lambda',\lambda'')\in\tilde\Lambda_{\Phi',\Phi''}\big\}\,.$$
\end{proposition}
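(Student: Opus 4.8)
The plan is to reduce the loosely constrained case to the (tightly) properly constrained case already handled by Theorem \ref{thm:dualopt}, by absorbing the slack constraints into a larger equality-constrained problem on an enlarged sample space. Concretely, I would introduce an auxiliary sample space $\hat\X = \X \times [0,+\infty)^{m''} \subseteq \R^{n+m''}$ (or a suitable closed subset thereof), together with the continuous map $\hat\Phi : \hat\X \to \R^{m'+m''}$ given by $\hat\Phi(x,s) = (\Phi'(x), \Phi''(x) - s)$. A measure $P \in \Hh$ lifts to a measure $\hat P$ on $\hat\X$ by pushing forward along $x \mapsto (x, \Phi''(x))$... more carefully, one wants $\langle \hat P, \hat\Phi\rangle = 0$ to encode exactly $\langle P, \Phi'\rangle = 0$ and $\langle P, \Phi''\rangle = \langle \hat P, s\rangle \geq 0$; this requires allowing $\hat P$ to have marginal on the $s$-coordinate supported where the slack is nonnegative. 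I would then check that the lifted hypothesis $\hat\Hh$ on $\hat\X$ is properly constrained (using $0 \in \relint(\conv\Phi(\X))$ to verify $0 \in \relint(\conv\hat\Phi(\hat\X))$, which holds because adding the nonnegative orthant directions $-s$ only enlarges the convex hull in the slack coordinates), and that e-variables on $\X$ for $\Hh$ correspond bijectively and order-isomorphically to e-variables on $\hat\X$ for $\hat\Hh$ that do not depend on the $s$-coordinate — the point being that any e-variable on $\hat\X$ can be replaced by its infimum over $s$ without decreasing it on the relevant support, since increasing $s$ only makes the constraint slacker.

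An alternative, possibly cleaner route that avoids enlarging the space: argue directly. Set $\tilde\Ee_\Hh^\vee$ as in the statement. First verify $\tilde\Ee_\Hh^\vee \subseteq \Ee_\Hh$: for $E = 1 - \lambda'\cdot\Phi' - \lambda''\cdot\Phi''$ with $\lambda'' \geq 0$ component-wise and $E \succeq 0$, and any $P \in \Hh$, we get $\langle P, E\rangle = 1 - \lambda'\cdot\langle P,\Phi'\rangle - \lambda''\cdot\langle P,\Phi''\rangle = 1 - \lambda''\cdot\langle P,\Phi''\rangle \leq 1$ since $\lambda'' \geq 0$ and $\langle P,\Phi''\rangle \geq 0$. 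Next show every element of $\tilde\Ee_\Hh^\vee$ is maximal: this should mimic Lemma \ref{lemma:dualmax}, but now one needs, for each $x \in \X$, a measure $P \in \Hh$ with an atom at $x$ AND with $\langle P, E\rangle = 1$, i.e. with $\lambda''_j \langle P, \Phi''_j\rangle = 0$ for every $j$; so for the coordinates $j$ with $\lambda''_j > 0$ we need $\langle P, \Phi''_j\rangle = 0$, i.e. we need $P$ in the tighter hypothesis where those slack constraints are active. This is where $0 \in \relint(\conv\Phi(\X))$ is used, via an argument analogous to Lemma \ref{lemma:finicovpro}, to produce such a $P$ supported near $x$.

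For the majorising direction — the crux — I would take $E \in \Ee_\Hh$ arbitrary and, for each fixed sign pattern / active set $J \subseteq \{1,\dots,m''\}$, consider the tightly constrained sub-hypothesis $\Hh_J = \{P \in \Hh : \langle P, \Phi''_j\rangle = 0 \ \forall j \in J,\ \langle P, \Phi''_j\rangle \geq 0 \ \forall j\notin J\}$; actually it is cleaner to note $\Hh \supseteq \Hh_\emptyset := \{P : \langle P,\Phi'\rangle = 0, \langle P,\Phi''\rangle = 0\}$, which (after restricting $\X$ to $\X_0 = \bigcup_{P\in\Hh_\emptyset}\supp P$ à la Lemma \ref{lemma:X0}, to restore properness) is a properly constrained hypothesis, and every $E \in \Ee_\Hh$ is in particular an e-variable for $\Hh_\emptyset$. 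Applying Theorem \ref{thm:dualopt} to $\Hh_\emptyset$ gives $E' = 1 - \mu\cdot(\Phi',\Phi'')$ with $E' \succeq E$ on $\X_0$; one then has to (a) extend/adjust to all of $\X$ and (b) modify the slack coefficients so they become nonnegative, replacing any $\mu''_j < 0$ using the slackness $\langle P, \Phi''_j\rangle \geq 0$ — decreasing such a coefficient toward $0$ only increases $E'$ pointwise on the region where $\Phi''_j \geq 0$, but one must control it where $\Phi''_j < 0$; this is the main obstacle, and I expect it to require a careful separating-hyperplane / duality argument (Farkas-type) rather than a one-line monotonicity remark, together with the compactness machinery (Lemma \ref{lemma:mincomp}, Corollary \ref{cor:seccomp}) to pass to the limit. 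I anticipate that the honest proof in the appendix sets this up via the lifted-space reduction of the first paragraph, since that mechanically converts the inequality constraints into equalities and lets Theorem \ref{thm:dualopt} be applied as a black box, with the only real work being the verification that properness is preserved and that the $s$-independence reduction is order-preserving.
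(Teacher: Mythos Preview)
Your anticipation is wrong: the paper does \emph{not} use a lifted-space reduction. For maximality it takes a shortcut you missed: since the fully tightened hypothesis $\Hh' = \{P\in\PR_\Phi : \langle P,\Phi\rangle = 0\}$ (your $\Hh_\emptyset$) is properly constrained and satisfies $\Hh' \subseteq \Hh$, one has $\Ee_{\Hh} \subseteq \Ee_{\Hh'}$ and $\tilde\Ee_\Hh^\vee \subseteq \Ee_{\Hh'}^\vee$; every element of $\tilde\Ee_\Hh^\vee$ is therefore maximal already in the \emph{larger} set $\Ee_{\Hh'}$ (by \Cref{lemma:dualmax}), hence a fortiori maximal in $\Ee_\Hh$. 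No complementary-slackness witness is needed.

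For the majorising direction the paper does not try to repair the sign of the slack coefficients coming out of \Cref{thm:dualopt} applied to $\Hh'$ --- the obstacle you flag is real and is sidestepped rather than overcome. Instead, the finite-$\X$ case is handled by a log-optimality argument: given a maximal $E$, one first obtains (as in the proof of \Cref{prop:findualopt}) a fully supported $\hat P \in \Hh$ with $\langle\hat P, E\rangle = 1$, sets $\hat Q(\{x\}) = E(x)\hat P(\{x\})$, and invokes the Gr\"unwald--Larsson numeraire characterisation to conclude that $E \in \argmax_{E'\in\Ee_\Hh}\langle\hat Q,\log E'\rangle$. Lagrangian duality for this strictly concave maximisation over the polyhedral constraint set then forces the maximiser to equal $1-\lambda'\cdot\Phi'-\lambda''\cdot\Phi''$ on $\supp\hat Q$, with $\lambda''\geq 0$ coming for free from the KKT sign condition on the inequality multipliers; maximality of $E$ upgrades the equality to all of $\X$. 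The passage from finite to compact to closed $\X$ is, as you guess, a rerun of the compactness machinery of \Cref{sec:optimal}, using that $\tilde\Lambda_{\Phi',\Phi''|S}$ is a closed subset of the compact $\Lambda_{\Phi|S}$.

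Your lifted-space route is nonetheless a viable alternative that the paper does not pursue: nonnegativity of $1-\lambda'\cdot\Phi'(x)-\lambda''\cdot(\Phi''(x)-s)$ on $\X\times[0,+\infty)^{m''}$ forces $\lambda''\geq 0$ (let $s_j\to+\infty$), and evaluating the dual majoriser at $s=0$ yields an element of $\tilde\Ee_\Hh^\vee$ dominating $E$. What you would still have to verify carefully is that the lifted hypothesis is properly constrained on the enlarged space and that $s$-independent e-variables on $\hat\X$ coincide exactly with $\Ee_\Hh$; both are plausible but not one-liners.
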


The set $\Hh = \{P\in\PR_\Phi\,:\,\langle P, \Phi\rangle=0\}$, where $\Phi:\X\to\R^m$ is a continuous function, is non-empty (and thus a hypothesis) if, and only if, $0\in\conv\Phi(\X)$ (\Cref{lemma:car} and \Cref{lemma:convP}). Conversely, when we allow for inequality constraints, as in the definition of loosely constrained hypotheses, this is not anymore a necessary condition. More explicitly, $\Hh = \{P\in\PR_{\Phi'}\cap\PR_{\Phi''}\,:\,\langle P,\Phi'\rangle =0\,,\,\langle P,\Phi''\rangle\geq 0\}$ can be non empty if $0\notin\conv\Phi(\X)$, (where $\Phi'$ and $\Phi''$ are continuous functions on $\X$ and $\Phi=(\Phi',\Phi'')$). However, here we only focus on the case $0\in\conv\Phi(\X)$, as required by \Cref{def:loosely}, and defer to future work the analysis of the setting of generic hypotheses defined via loose constraints. We also note that \Cref{lemma:X0} cannot be trivially extended to the case of loosely constrained hypotheses, and it is not always possible to restrict the domain $\X$ of a loosely constrained hypothesis to make it properly loosely constrained. For an example where this is not possible, consider the case of a loosely constrained hypothesis on $\X=\R$, with tight constraint $\Phi':x\mapsto 1-x$ and slack constraint $\Phi'':x\mapsto x^2-1$.

\section{Algorithmic mean estimation}\label{sec:AME}

In \cite{clerico2024optimality}, the notion of optimal e-class was developed in the context of applying the testing framework to mean estimation for random variables bounded in $[0,1]$. The class of constrained hypotheses that we have introduced allows for a substantial extension of those results.
\subsection{Mean estimation for bounded random variables}\label{sec:meanbounded}
Let $\X$ be a compact set in $\R^n$ and let $\Cc$ denote its convex hull. We observe independent draws $(X_i)_{i\geq 1}$ from an unknown distribution $P^\star\in\PR_\X$, whose mean $\mu^\star\in\Cc$ needs to be estimated. It is possible to generate an anytime-valid sequence of confidence sets (usually referred to as \emph{confidence sequence}) via sequential hypothesis testing (see, e.g., \citealp{ramdas2024hypothesis}). More precisely, we are looking for a sequence of random sets $(S_t)_{t\geq 1}$, adapted to $(X_i)_{i\geq 1}$ and such that 
$$\Pp\big(\mu^\star\in S_t\,,\;\forall t\geq 1\big)\geq 1-\delta\,,$$
where $\Pp$ denotes the law of the sequence of observations and $\delta\in(0,1)$ is a fixed confidence level.

Let $\PR_1$ denote the set of measures $P\in\PR_\X$ satisfying $\langle P, \|X\|_1\rangle <+\infty$. Clearly, for $P\in\PR_1$ the mean $\langle P, X\rangle$ is well defined. For $\mu\in\Cc$, let $$\Hh_\mu = \left\{P\in\PR_1\;:\;\langle P, X\rangle =\mu\right\}\,.$$ $\Hh_\mu$ is a hypothesis on $\X$ (it is non-empty by \Cref{lemma:car}). It is finitely constrained, with $\Phi_\mu:x\mapsto \mu-x$ a constraint (as $0\in\conv\Phi_\mu(\X)$ because $\mu\in\Cc$). Moreover, if $\mu\in\relint\Cc$, $\Hh_\mu$ is a properly constrained hypothesis on $\X$. Hence, for the sake of simplicity, we assume as known that $\mu^\star\in\relint\Cc$. Therefore, we are only interested in confidence sets $S_t\subseteq\relint\Cc$. We note, however, that the general case can be addressed by dealing with the values of $\mu$ on the relative boundary of $\Cc$ as discussed in \Cref{sec:finitely}. A valid confidence sequence for $\mu^\star\in\relint\Cc$ with confidence level $\delta\in(0,1)$ is $(S_t)_{t\geq 1}$, where $$S_t = \big\{\mu\in\relint\Cc\;:\;R_t(\mu)\leq\log(1/\delta)\big\}\,.$$
Here, $R_t(\mu)$ denotes the cumulative reward, up to round $t$, of a player who is playing a testing game as in \Cref{def:game} on the null hypothesis $P^\star\in\Hh_\mu$, observing the sequence $(X_i)_{i\geq 1}$. From the fact that $\{\mu^\star = S_t\,,\,\forall t\geq 1\}\iff\{R_t(\mu^\star)\leq\log(1/\delta)\,,\,\forall t\geq 1\}$, which happens with probability at least $1-\delta$ by \Cref{prop:game}, it follows that $(S_t)_{t\geq 1}$ is a confidence sequence.

Of course, in order to get a tight confidence sequence, we shall restrict the testing games to the optimal e-classes (see also the discussion in \citealp{clerico2024optimality}). For every $\mu\in\relint\Cc$, $\Hh_\mu$ is a properly constrained hypothesis and we can leverage the theory that we have developed so far. In particular, we have $\Lambda_{\Phi_\mu} = \{\lambda\in\R^n\,:\,\sup_{x\in\X}\lambda\cdot (\mu-x)\leq 1\}$, and the optimal e-class is
$$\Ee_{\Hh_\mu}^\vee=\left\{x\mapsto 1+\lambda\cdot(x-\mu)\,:\,\lambda\in\Lambda_{\Phi_\mu}\right\}\,.$$
When $\X=[0,1]$, $\Ee_{\Hh_\mu}^\vee$ is the \emph{coin-betting} e-class studied in \cite{clerico2024optimality}, which had been used for mean estimation by \cite{orabona2023tight} and \cite{waudbysmith23estimating}.

\subsection{Mean estimation for heavy-tailed distributions}\label{sec:heavy}
Consider the case $\X=\R$. We want to estimate the mean $\mu^\star$ of an unknown probability distribution $P^\star\in\PR_\R$, under the assumption that $\langle P^\star, X^2\rangle\leq 1$. Note that this is a heavy-tail mean estimation problem, as we only have an upper bound on a moment of $P^\star$. Let $(X_i)_{i\geq 1}$ be a sequence of independent draws from $P^\star$. As we have just done in \Cref{sec:meanbounded}, we aim at obtaining a confidence sequence $(S_t)_{t\geq1}$ for $\mu^\star$. Clearly, the fact that $\langle P^\star, X^2\rangle\leq 1$ implies that $\mu^\star\in[-1,1]$. For $\mu\in[-1,1]$, let 
$$\Hh_\mu = \big\{P\in\PR_\R\;:\;\langle P, X^2\rangle\leq 1\,,\;\langle P, X\rangle=\mu\big\}\,,$$ where we note that the fact that $\langle P, X^2\rangle\leq 1$ automatically implies that $\langle P, X\rangle$ is well defined. Clearly, $\Hh_1 = \{\delta_1\}$ and $\Hh_{-1}=\{\delta_{-1}\}$. It is easy to check that, for $\mu\in(-1,1)$, $\Hh_\mu$ is a loosely properly constrained hypothesis on $\R$, where $\Phi'_\mu:x\mapsto\mu-x$ is a tight constraint and $\Phi'':x\mapsto 1-x^2$ is a slack constraint. Denoting as $R_t(\mu)$ the cumulative reward (up to round $t$) of a player betting in a testing game on $\Hh_\mu$ and observing the sequence $(X_i)_{i\geq 1}$, reasoning as in \Cref{sec:meanbounded}, we deduce that 
$$S_t = \big\{\mu\in(-1,1)\;:\;R_t(\mu)\leq\log(1/\delta)\big\} \cup U_t$$
defines a confidence sequence for $\mu^\star$, where $U_t = \{1\}$ if for every $s\leq t$ all $X_s=1$, $U_t = \{-1\}$ if for every $s\leq t$ all $X_s=-1$, and $U_t=\varnothing$ otherwise.

By \Cref{prop:loosely}, for $\mu\in(-1,1)$ the optimal e-class is
$$\tilde\Ee_{\Hh_\mu}^\vee = \big\{x\mapsto 1+\alpha(x-\mu) + \beta(x^2-1)\;:\;(\alpha,\beta)\in\tilde\Lambda_{\Phi'_\mu,\Phi''}\}\,,$$
where $\tilde\Lambda_{\Phi'_\mu,\Phi''} = \big\{(\alpha,\beta)\in\R^2
\,:\,\alpha^2+4\mu\alpha\beta+4\beta^2-4\beta\leq0\}$
is the convex hull of an ellipse lying on the half-plane where $\beta\geq 0$. 

This result can be extended to the more general case $\X = \R^n$, with the bounded second moment assumption replaced by $\langle P^\star, \|X\|^{1+\ee}\rangle\leq B$, for some given real constants $B>0$ and $\ee>0$, and where $\|\cdot\|$ denotes the standard Euclidean norm. We can also consider the case of a bounded central moment, namely $\langle P^\star,\|X-\mu^\star\|^{1+\ee}\rangle\leq B$. Indeed, in such case, $\Phi'_\mu:x\mapsto\mu -x$ and $\Phi''_\mu\mapsto B-\|x-\mu\|^{1+\ee}$ are the tight and slack constraints of a loosely properly constrained hypothesis for any $\mu\in\R^n$.

The problem of finding confidence sequences for heavy-tailed random variables via betting strategies has been addressed in the literature. \cite{agrawal2021regret} obtained anytime-valid lower bounds for real random variables with bounded $(1+\ee)$-th (non-centred) moment, via an approach that implicitly uses e-variables. Their Lemma 17 is in fact a regret upper bound for the cumulative reward of a player in a testing game as in \Cref{def:game}. Notably, our results show that their analysis considered games restricted to the optimal e-class. On the other hand, \cite{wang2023catoni} leveraged ideas from \cite{catoni2012challenging} to set up testing games in order to find confidence sequences for the mean of real random variables with bounded $(1+\ee)$-th central moment. The theory that we have developed in this work shows that their choice of restriction for the e-variables does not correspond to the optimal e-class.

\section{Conclusion}\label{sec:conclusion}
This work helps to deepen the understanding of the notion of optimal e-class introduced in \cite{clerico2024optimality} for hypothesis testing via e-variables, focusing on its characterisation for non-parametric hypotheses defined by a finite number of continuous constraints. We now conclude with some final remarks and possible directions for future research. 

First, the key concepts of maximal e-variable and majorising and optimal e-class that we discuss in this work are strongly connected to the classical statistical notions of \emph{admissibility} and \emph{completeness} (see, e.g., Section 1.8 in \citealp{lehmann2005testing}). Indeed, with this terminology, a maximal e-variable can be essentially thought of as an admissible e-variable, in the sense that no other e-variable uniformly dominates it, a majorising e-class is a complete class of e-variables, and the optimal e-class represents the minimal complete class. Notably, the idea of admissibility has already been explored in the e-variable literature. Indeed, \cite{ramdas2022admissible} considered a notion of admissible wealth processes, which in some contexts is closely related to the maximality for e-variables discussed here. However, their results do not suffice to characterise the optimal e-class in the setting that we study, as already pointed out by \cite{clerico2024optimality}. Indeed, it would be possible to use their results if we were considering hypotheses where a common reference measure (namely a measure for which every element in $P$ admits a density) exists, which is not the case in the setting we study. Admissibility has also been recently examined in a different context involving e-variables, when ``merging'' e-variables \citep{wang2024admissible}. Exploring the connections between those ideas and the concepts discussed in this work could be a promising direction for future research.

To establish our results, we employed a standard functional analytic approach. We began by proving the optimality of the dual e-class for a set $\X$ with finite cardinality, then extended it to the case of a compact $\X$ using a dense countable subset, and finally leveraged sigma-compactness for a general closed $\X$. Our proofs are self-contained and avoid relying on advanced tools from functional analysis or measure theory. Building on the formulation and findings presented in the first pre-print of this work, \cite{larsson2025evariables} have recently re-derived our characterization of the optimal e-class for constrained hypotheses and have extended it to a significantly more general framework. To achieve this broader generality, their approach relies on sophisticated topological and functional-analytic techniques to derive duality results in functional spaces. Nonetheless, we believe that in scenarios where the problem can be reduced to finite-dimensional analysis, our more straightforward and elementary method remains valuable and more accessible.

As highlighted in the introduction, our analysis is restricted to the characterisation of optimality within the testing-by-betting framework with e-values, as defined in \Cref{def:game}. More generally, however, sequential testing with e-values may proceed differently. At each round $t$ one might select an e-variable $E(x_1,\dots, x_t)$, with respect to a hypothesis concerning the entire sequence of observations up to time $t$. In such settings, it is natural to define the hypothesis $\Hh$ on the space of stochastic processes taking values in $\X$, rather than on $\X$ itself. This broader perspective is, for instance, essential when testing whether a sequence of observations is i.i.d. These kinds of sequential tests typically rely on e-processes, namely non-negative stochastic processes whose expected value remains bounded by one, under the null hypothesis, for any stopping time \citep{ramdas2024hypothesis, ramdas2022admissible}. In contrast, the present work focuses exclusively on single-round e-variables, where the hypothesis is defined on on $\PR_\X$, the space of distributions over a single observation. This framework can be appropriate when one assumes that the observed data arise from i.i.d.~draws from a $P\in\PR_\X$, and is interested solely in verifying whether this $P$ is within $\Hh$. In this sense, tests that might reject the hypothesis on the basis that the data appear non-i.i.d.~are not considered, as the i.i.d.~assumption is given for granted (one might want to formalise this by asking for test that has power under any alternative stating that the data are i.i.d.). The more general problem of characterising the optimal class of e-processes for suitably constrained hypotheses (for instance under i.i.d. assumptions, or assuming that constraints hold conditionally) is left open for future investigation.

Finally, open questions also remain regarding the minimal conditions, on the hypothesis $\Hh$, necessary for the existence of the optimal e-class, and the extent to which such e-class can be explicitly characterised, when it does exist. 

\subsection*{Acknowledgements}
I would like to thank Gergely Neu, Peter Gr\"unwald, Gabor Lugosi, Nishant Mehta, Hamish Flynn, Ludovic Schwartz, Claudia Chanu, and Riccardo Camerlo for the insightful discussions that contributed to this work, and Nick Koning for pointing out interesting connections with related literature. This project was funded by the European Research Council (ERC), under the European Union’s Horizon 2020 research and innovation programme (grant agreement 950180).\\
Since the initial appearance of this manuscript, parts of it have been simplified thanks to valuable suggestions. In particular, I would like to thank Martin Larsson for proposing a much simpler example demonstrating the non-existence of the optimal e-class, which, unlike the example in the original version, avoids the use of non-measurable sets.
\newpage
\bibliographystyle{abbrvnat}
\bibliography{bib}
\newpage
\appendix
\section{Omitted proofs}\label{app:proofs}

\begin{manual}{\Cref{lemma:countopt}}
    If $\X$ is countable and $\sup_{P\in\Hh}P(\{x\})>0$ for all $x\in\X$, the optimal e-class exists.
\end{manual}
\begin{proof}
    This proof makes use of elementary properties of ordinals. For an accessible reference, see \cite{aliprantis06infinite}.

    By \Cref{lemma:maxopt}, it is enough to show that every e-variable is majorised by a maximal e-variable, which implies that the set of all maximal e-variables is a majorising class. Fix $E_1\in\Ee_\Hh$. If $E_1$ is already maximal, we are done. So, we focus on the case of an $E_1$ that is not maximal.

    Consider the transfinite sequence (namely, net), starting from $E_1$ and indexed in $\Omega_0=[1,\omega_1)$ (the set of the countable ordinals), defined as follows. For any successor ordinal $\beta>1$ in $\Omega_0$, let $E_\beta = E_{\beta-1}$ if $E_{\beta-1}$ is maximal, otherwise pick any $E\in\Ee_\Hh$ such that $E\succ E_{\beta-1}$ and let $E_\beta=E$. For any limiting ordinal $\gamma\in\Omega_0$, let $(\beta_n)_{n\geq 1}\subseteq\Omega_0$ be an increasing sequence converging to $\gamma$ (under the order topology). For $x\in\X$, let $A(x)=1/\sup_{P\in\Hh}P(\{x\})\in[1,+\infty)$. Note that for any $E\in\Ee_\Hh$ and any $x\in\X$, $E(x)= A(x)E(x)\sup_{P\in\Hh}P(\{x\})\leq A(x)\sup_{P\in\Hh}\langle P, E\rangle \leq A(x)$. So, for each $x\in\X$, the sequence $(E_{\beta_n}(x))_{n\geq 1}$ is non-decreasing and bounded in $[0, A(x)]$, and hence admits a finite limit. Therefore, we can  define the point-wise limit $E_\gamma = \lim_{n\to\infty}E_{\beta_n}$. Moreover, by Beppo Levi theorem, $E_\gamma$ is Borel and $\langle P, E_\gamma\rangle \leq 1$, for any $P\in\Hh$. So, $E_\gamma\in\Ee_\Hh$.
    
    Now, consider the set $B\subseteq\Omega_0$ of the indices where the transfinite sequence has jumps (namely, $E_{\beta}\succ E_{\beta-1}$ if, and only if, $\beta\in B$). Since $E_1$ is not maximal, $B$ is non-empty. For each $\beta\in B$ we can select a pair $(x_\beta,q_\beta)\in\X\times\mathbb Q$, such that $E_{\beta-1}(x_\beta)<q_\beta<E_{\beta}(x_\beta)$. This defines an injection $B\to\X\times\mathbb Q$, whose existence implies that $B$ is countable, because $\X\times\mathbb Q$ is countable. As every countable subset of $\Omega_0$ has a least upper bound in $\Omega_0$ (Theorem 1.14 in \citealp{aliprantis06infinite}), we have $\beta^\star = \sup B\in\Omega_0$. By construction, $E_{\beta^\star}\succeq E_1$ and $E_{\beta^\star}$ is maximal, since it must be that $E_{\beta^\star+1} = E_{\beta^\star}$, as otherwise $\beta^\star+1$ would be in $B$.
\end{proof}

\begin{manual}{\Cref{prop:nonex}}
    Let $\X=[0,1]$. Consider the hypothesis
    $$\Hh = \big\{P\in\PR_{[0,1]}\,:\,P(\{0\})\geq 1/2\big\}\cup\{U_{[0,1]}\}\,,$$
    with $U_{[0,1]}$ the uniform distribution on $[0,1]$. $\Hh$ does not admit an optimal e-class.
\end{manual}
\begin{proof}
    First, let us notice that all e-variables must be everywhere upper bounded by $2$. Indeed, for any $x\in[0,1]$ we have that $P_x = \frac{1}{2}\delta_0 + \frac{1}{2}\delta_x$ is in $\Hh$. In particular, for any e-variable $E$ we have $\langle P_x, E\rangle = \frac12(E(x) + E(0)) \geq \frac12 E(x)$, and so $E(x)\leq 2$. 
    
    Let $\Ee_1$ denote the set of all e-variables such that $E(1)=2$. Clearly, this is non-empty, since the function that is $0$ everywhere and $2$ on $1$ belongs to $\Ee_1$. We now show that no variable in $\Ee_1$ can be maximal. As a consequence, since all e-variables in $\Ee_1$ can only be majorised by e-variables in $\Ee_1$, we conclude that there is no maximal e-variable that majorises the e-variables in $\Ee_1$, and so the set of all maximal e-variables is not a majorising e-class. By \Cref{lemma:maxopt} we conclude that the optimal e-class does not exist. 

    Now, let $E\in\Ee_1$. First note that $E(0) = 0$, as with $P_1 = \frac{1}{2}\delta_0 + \frac12\delta 1$ we have that $1\geq \frac{1}{2}(E(0) + E(1)) = 1 + \frac{1}{2}E(0)$. Moreover, since $\langle U_{[0,1]},E\rangle\leq 1$, $E$ cannot be identically equal to $2$ and there must be at least one $x'\in(0,1]$ such that $E(x')<2$. Define $E'$ as 
    $$E'(x) = \begin{cases}E(x)&\text{if $x\neq x'$;}\\2&\text{if $x=x'$.}\end{cases}$$
    Clearly, $E'\succ E$. So, it is enough to show that $E'$ is an e-variable to establish that $E$ cannot be maximal. 
    
    Fix any $P$ such that $P(\{0\})\geq 1/2$. We can write $P = P(\{0\})\delta_0 + (1-P(\{0\}))P'$, where $P'\in\PR_{[0,1]}$ is such that $P'(\{0\}) = 0$. We have 
    $$\langle P, E'\rangle = P(\{0\})E'(0) + (1-P(\{0\}))\langle P', E'\rangle\,.$$ Since $x'>0$, $E'(x)=E(x)=0$, so $\langle P, E'\rangle \leq \frac{1}{2}\langle P', E'\rangle\leq 1$, since $E'\preceq 2$. We are only left with checking that $\langle U_{[0,1]}, E'\rangle \leq 1$. But since $E$ and $E'$ differs only on $x'$, it is clear that $\langle U_{[0,1]}, E'\rangle = \langle U_{[0,1]}, E\rangle$, and so we conclude.
\end{proof}

\begin{manual}{\Cref{lemma:prored}}
    Let $\Hh$ be a properly constrained hypothesis on $\X$. Then, for every $x\in\X$ there exists a $P\in\Hh$ whose support has finite cardinality and such that $x\in\supp P$.
\end{manual}

\begin{proof}
Let $\Phi:\X\to\R^m$ be a proper constraint and denote as $\Y$ its image. Now, fix $x_0\in\X$ and let $y_0=\Phi(x_0)$. By \Cref{cor:Yfinsupp}, since $0\in\relint(\conv\Y)$, we can find $P'\in\PR_\Y$, with zero mean and such that $P'(\{y_0\})>0$, and whose support has finitely many (say $r+1$) elements. So, for some $r\geq 0$ we can write $\supp P' = \{y_0, y_1, \dots, y_r\}$. We can choose $r$ points $\{x_1,\dots,x_r\}\subseteq \X$ such that $\Phi(x_i) = y_i$ for any $i=1\dots r$. Then, we define $P = \sum_{i=0}^r P'(\{y_i\})\delta_{x_i}$. Clearly, $\supp P$ has finitely many elements (so, $\langle P, \|\Phi\|_1\rangle<+\infty$) and $P(\{x_0\}) = P'(\{y_0\})>0$. Moreover, $\langle P, \Phi\rangle = \langle P', Y\rangle =0$ by construction, so $P\in\Hh$.
\end{proof}

\begin{manual}{\Cref{lemma:finicovpro}}
    Let $\Hh$ be a finitely constrained hypothesis on $\X$. $\Hh$ is properly constrained if, and only if, $\sup_{P\in\Hh}P(\{x\})>0$ for all $x\in\X$. Moreover, if $\Hh$ is properly constrained, every constraint $\Phi$ for $\Hh$ is a proper constraint and there is $A:\X\to[1, +\infty)$ such that $E\preceq A$ for any $E\in\Ee_\Hh$.
\end{manual}
\begin{proof}
    Let $\Phi$ be a constraint for $\Hh$. Note that if $\Phi$ is identically null, then $\Hh = \PR_\X$ and $\Phi$ is a proper constraint, as $\relint\{0\} = \{0\}$. So, we only need to consider the case where $\Phi$ is not identically null.
    
    First, if $\Hh$ is properly constrained, by \Cref{lemma:prored}, $\sup_{P\in\Hh} P(\{x\})>0$, for all $x\in\X$. 
    
    Now, assume that $\sup_{P\in\Hh} P(\{x\})>0$ for all $x\in\X$, and let $\Hh$ be a finitely constrained hypothesis. Let $\Phi$ be a constraint. We show that $\Phi$ is a proper constraint by contradiction. Denote as $\Cc$ the convex hull of $\Phi(\X)$, and let $\partial\Cc$ be its relative boundary. Assume that $\Phi$ is not a proper constraint. Then, $0\in\partial\Cc$. Since $\Hh$ is non-empty and $\Phi$ is not identically null, there are at least two distinct points in $\Phi(\X)$. Hence, $\Cc$ has non-empty relative interior and there is $x_0\in\X$ such that $\Phi(x_0)\in\relint\Cc$. By assumption, there is $P\in\Hh$ such that $P(\{x_0\})>0$, so $x_0\in\supp P$. But by \Cref{lemma:convP}, $\supp P\subseteq\Phi^{-1}(\partial\Cc)$ for each $P\in\Hh$. This is a contradiction since $x_0\in\Phi^{-1}(\relint\Cc)$, which is an open set\footnote{Indeed, we can see $\Phi$ as a continuous function from $\X$ to $\aff\Phi(\X)$, and $\relint\Cc$ is an open set of $\aff\Phi(\X)$, so its counter-image is open.} that has an empty intersection with $\Phi^{-1}(\partial\Cc)$.

    Note that what shown so far also implies that any constraint of a properly constrained hypothesis is a proper constraint.

    Finally, let $\Hh$ be properly constrained and $A:x\mapsto 1/\sup_{P\in\Hh}P(\{x\})$, which defines a function $\X\to[1,+\infty)$. Then, for any e-variable $E\in\Hh$ and any $x\in\X$, we have $$E(x)\leq A(x)E(x)\sup_{P\in\Hh}P(\{x\})\leq A(x)\sup_{P\in\Hh}\langle P, E\rangle \leq A(x)\,,$$ and we conclude.
\end{proof}

\begin{manual}{\Cref{lemma:prominiff}}
    Let $\Hh$ be a properly constrained hypothesis and $\Phi$ a proper constraint. $\Phi$ is minimal if, and only if, all its components are linearly independent scalar functions.
\end{manual}
\begin{proof}
    Let $\dim(\Span\Phi)=m'$.
    By \Cref{lemma:dimaff}, $m'$ is also the dimension of the affine hull of $\Phi(\X)$. Hence, $\aff\Phi(\X)=\R^m$ if, and only if, $m'=m$. If this is the case, then the interior and relative interior of $\conv\Phi(\X)$ coincide, and so $0\in\relint(\conv\Phi(\X))$ implies that $0\in\inter(\conv\Phi(\X))$. On the other hand, if $m'<m$ the interior of $\conv\Phi(\X)$ is empty, and $\Phi$ cannot be minimal.
\end{proof}

\begin{manual}{\Cref{lemma:prominc}}
    Every properly constrained hypothesis admits a minimal constraint.
\end{manual}
\begin{proof}
    Let $\Hh$ be a properly constrained hypothesis on $\X$ and $\Phi:\X\to\R^m$ be a proper constraint. Denote as $U$ the affine hull $\aff\Phi(\X)$, and let $\Pi_U$ be the orthogonal projection from $\R^m$ to $U$. Let $\Phi_U = \Pi_U\circ\Phi$ and $\Hh_U = \{P\in\PR_{\Phi_U}\,:\,\langle P, \Phi_U\rangle =0\}$. Since $\Phi(\X)\subseteq U$, we have that $\Phi_U(x) = \Phi(x)$ for all $x\in\X$. It follows immediately that $\Hh=\Hh_U$. Using that $U\cong\R^{m'}$ (for some $m'\leq m$), we can define a linear isomorphism $I:U\to\R^{m'}$. Then, $\langle P, \Phi_U\rangle = 0$ if, and only if, $\langle P, I\circ\Phi_U\rangle =0$. It follows that $I\circ\Phi_U$ is a proper constraint. By construction, $0\in\inter(\conv(I\circ\Phi_U(\X)))$, so $I\circ\Phi_U$ is a minimal constraint.
\end{proof}

\begin{manual}{\Cref{lemma:profull}}
    Let $\Hh$ be a finitely constrained hypothesis on $\X$. $\Hh$ is properly constrained if, and only if, there is $P\in\Hh$ such that $\supp P=\X$. 
\end{manual}
\begin{proof}
    Assume that $\Hh$ is properly constrained. Fix a sequence $(x_i)_{i\geq 1}\subseteq\X$, dense in $\X$. For each $i\geq 1$, by \Cref{lemma:finicovpro} there is a $P_i\in\Hh$ such that $P_i(\{x\})>0$. Let $\Phi$ be a constraint. Since $P_i\in\Hh$ we have $P\in\PR_\Phi$, and so $\zeta_i = \langle P_i,\|\Phi\|_1\rangle$ is finite. Let $\alpha_i = 2^{-i}/(1+\zeta_i)$. Then, we have $\xi=\sum_{i=1}^\infty\alpha_i\in(0,1]$. Let $P = \sum_{i=1}^\infty \frac{\alpha_i}{\xi}P_i$. By construction, $P\in\PR_\X$. Moreover, $\langle P, \|\Phi\|_1\rangle = \xi\sum_{i=1}^\infty \alpha_iz_i\leq \xi\sum_{i}^\infty 2^{-i}=\xi$, and $\langle P, \Phi\rangle=0$. It follows that $P\in\Hh$. As, $\supp\Phi\supseteq(x_i)_{i\geq 1}$, which is dense, we conclude that $\supp P = \X$.

    Conversely, let $\Hh$ be finitely constrained and not properly constrained. For every constraint $\Phi$, it must be that $0\in\Cc=\conv\Phi(\X)$ (see \Cref{lemma:convP}). Since there is no proper constraint, $0$ must be in the relative boundary $\partial\Cc$ of $\Cc$. Again by \Cref{lemma:convP}, any $P\in\Hh$ is supported on $\Phi^{-1}(\partial\Cc)$. As the relative interior of a non-empty set is non-empty, $\supp P\subseteq \Phi^{-1}(\partial\Cc)\subsetneq\X$.
\end{proof}

\begin{manual}{\Cref{lemma:Hsfic}}
    Let $\Hh$ be a finitely constrained hypothesis on $\X$ and $\Phi$ a constraint. A closed set $S\subseteq\X$ is compatible if, and only if, $0\in\conv\Phi(S)$. In such case, $\Hh_S$ is a finitely constrained hypothesis on $S$ and $\Phi\big|_S$ is a constraint for it. In particular, $\Hh_S = \{P\in\PR_{\Phi|S}\,:\,\langle P,\Phi\big|_S\rangle=0\}$.
\end{manual}
\begin{proof}
    If $0\in\conv\Phi(S)$, \Cref{lemma:car} implies that there is a finite set $\{x_1,\dots,x_r\}\subseteq S$ such that $\sum_{i=1}^r\alpha_i\Phi(x_i) = 0$, with the $\alpha_i$ non-negative coefficients that sum to $1$. In particular, $P=\sum_{i=1}^r\alpha_i\delta_{x_i}$ is in $\Hh_S$, and so $S$ is compatible. On the other hand, if $S$ is compatible, then $0\in\conv\Phi(S)$ by \Cref{lemma:convP}.

    Now, let $\Phi$ be a constraint for $\Hh$. Then $\Phi\big|_S:S\to\R^m$ is continuous, as it is the restriction of a continuous function.  Let $P\in\Hh_S$. Then, $P\in\PR_\Phi\cap\PR_S = \PR_{\Phi|S}$, and since $\Hh_S\subseteq\Hh$, we have that $\langle P, \Phi\big|_S\rangle = \langle P, \Phi\rangle = 0$. So, $\Hh_S\subseteq \{P\in\PR_{\Phi|S}\;:\;\langle P, \Phi\big|_S\rangle=0\}$.
    Now, if $P\in\PR_{\Phi|S}$ and $\langle P, \Phi\big|_S\rangle =0$, then $\langle P, \Phi\rangle =0$. So $P\in\Hh$ and, since $P\in\PR_S$, we also have $P\in\Hh_S$. 
\end{proof}

\begin{manual}{\Cref{lemma:comsupp}}
    Let $\Hh$ be a properly constrained hypothesis on $\X$ and $S\subseteq\X$ a compatible set. $S$ is a matching set if, and only if, $\Hh_S$ is a properly constrained hypothesis on $S$.
\end{manual}
\begin{proof}
    This is a direct consequence of \Cref{lemma:profull}.
\end{proof}
\begin{manual}{\Cref{lemma:finprocont}}
    Let $\Hh$ be a properly constrained hypothesis on $\X$. A finite union of matching sets is a matching set. In particular, every finite subset of $\X$ is contained in a finite matching set.
\end{manual}
\begin{proof}
    Let $\{S_1,\dots, S_N\}$ be a finite family of matching sets. For each $S_i$ there is $P_i\in\Hh$ such that $\supp P_i=S_i$. Let $P = N^{-1}\sum_{i=1}^N P_i$. Then, $P\in\Hh$ by convexity, so $S'=\supp P$ is a matching set. Moreover, $S'$ is the closure of $\bigcup_{i=1}^N S_i$, but since every $S_i$ is closed we have $S'=\bigcup_{i=1}^m S_i$. Now, let $S\subseteq\X$ be any finite set. By \Cref{lemma:prored}, for every $x\in S$ there is a $P_x\in\Hh$ with finite support $S_x$ that contains $x$. Clearly, $\bigcup_{x\in S}S_x$ contains $S$, and it is a matching set from what we have just shown, as it is the union of finitely many matching sets.
\end{proof}

\begin{manual}{\Cref{lemma:mincomp}}  
    Let $\Hh$ be a properly constrained hypothesis on $\X$ and $\Phi$ a constraint. Then $\Phi$ is minimal, if, and only if, $\Lambda_\Phi$ is compact. More precisely, $\Phi$ is minimal if, and only if, there is a finite matching set $S^\star$ such that $\Lambda_{\Phi|S^\star}$ is compact. If such $S^\star$ exists, $\Lambda_{\Phi|S}$ is compact for any $S\subseteq\X$ that contains $S^\star$, and any closed set $S\subseteq \X$ that contains $S^\star$ is a matching set.
\end{manual}
\begin{proof}
    First, notice that if $\Phi:\X\to\R^m$ is not minimal, then by \Cref{lemma:prominiff} its components are not linearly independent. In particular, there is $\lambda\in\R^m$ such that $\lambda\cdot\Phi$ is identicalluy null but $\lambda\neq0$. Since, for any $\alpha\in\R$, we have that $\alpha\lambda\in\Lambda_\Phi$, we conclude that $\Lambda_\Phi$ is unbounded. In particular, for any $S^\star\subseteq\X$ we also have that $\Lambda_{\Phi|S^\star}\supseteq\Lambda_\Phi$ is unbounded.

    Now, assume that $\Phi:\X\to\R^m$ is minimal. Again by \Cref{lemma:prominiff}, all its components $\Phi_i:\X\to\R$ are linearly independent. In particular, by \Cref{lemma:ludovic} we can find a set $s$ with finitely many elements, such that $\{\Phi_1\big|_{s},\dots,\Phi_m\big|_{s}\}$ is a family of $m$ linearly independent finite-dimensional vectors. By \Cref{lemma:finprocont}, there is a matching set $S^\star\subseteq \X$ that contains $s$ and has finitely many elements. Clearly, we still have that $\{\Phi_1\big|_{S^\star},\dots,\Phi_m\big|_{S^\star}\}$ is a linearly independent family. As $\Hh_{S^\star}$ is a properly constrained hypothesis by \Cref{lemma:comsupp}, and $\Phi\big|_{S^\star}$ a constraint for it (\Cref{lemma:Hsfic}), $1-\lambda\cdot\Phi\big|_{S^\star}$ is an e-variable for $\Hh_{S^\star}$ for every $\lambda\in\Lambda_{\Phi|{S^\star}}$, as it is non-negative and has expectation $1$ under every $P\in\Hh_{S^\star}$. In particular, since ${S^\star}$ is finite and $\Hh_{S^\star}$ is properly constrained, by \Cref{lemma:finicovpro} there is a finite constant $a\geq 1$ such that, for any $x\in S^\star$ and any $\lambda\in\Lambda_{\Phi|{S^\star}}$, we have $1-a\leq \lambda\cdot\Phi(x)\leq 1$. By \Cref{lemma:boundedL}, $\Lambda_{\Phi|{S^\star}}$ is bounded, and so compact by \Cref{lemma:LFclo}. 
    
    Now, let $S\subseteq\X$ be any set that contains $S^\star$. Then, $\Lambda_{\Phi|S}$ is compact, as it is a closed (\Cref{lemma:LFclo}) subset of $\Lambda_{\Phi|{S^\star}}$. In particular, $\Lambda_\Phi$ is compact. Moreover, from what we have already shown, $\Phi\big|_{S^\star}$ is a minimal constraint for $\Hh_{S^\star}$, since $\Lambda_{\Phi|S^\star}$ is compact. We thus have that $0\in\inter(\conv\Phi(S^\star))\subseteq\inter(\conv\Phi(S))$. In particular, by \Cref{lemma:Hsfic}, $S$ is compatible, and $\Hh_S$ is a finitely constrained hypothesis on $S$, with $\Phi\big|_S$ a constraint. Since $\Phi\big|_S$ contains $0$ in the interior of the convex hull of its image, it is a minimal constraint, and so $\Hh_S$ is properly constrained on $S$. By \Cref{lemma:comsupp} we conclude that $S$ is a matching set.
\end{proof}

\begin{manual}{\Cref{lemma:X0}}
    Let $\Hh$ be a finitely constrained hypothesis on $\X$. Let $\X_0=\bigcup_{P\in\Hh}\supp P$. Then, $\X_0$ is closed and $\Hh$ is a properly constrained hypothesis on $\X_0$.
\end{manual}
\begin{proof}
    Let $\bar\X_0$ be the closure of $\X_0$. Since for every $P\in\Hh$ we have that $\supp P\subseteq\bar\X_0$, and $\bar\X_0$ is closed, $\Hh\subseteq\PR_{\bar\X_0}$ and so it is a hypothesis on $\bar\X_0$. In particular, $\bar\X_0$ is a compatible set, and $\Hh_{\bar\X_0} = \Hh\cap\PR_{\bar\X_0} = \Hh$. Hence, $\Hh$ is a finitely constrained hypothesis on $\bar\X_0$ by \Cref{lemma:Hsfic}. By \Cref{lemma:profull}, to show that $\Hh$ is a properly constrained hypothesis on $\bar\X_0$ it is enough to show that there is $P^\star\in\Hh$ such that $\supp P^\star = \bar\X_0$. As $\supp P^\star\subseteq\X_0\subseteq\bar\X_0$, this will also yield $\X_0=\bar\X_0$.
    
    Fix a sequence $(x_i)_{i\geq 1}\subseteq\X_0$, dense in $\bar\X_0$. By construction, for each $i\geq 1$, there is $P_i\in\Hh$ such that $x_i\in\supp P_i$. Let $\Phi:\X\to\R^m$ be a constraint for $\Hh$. Since $P_i\in\Hh$, we have $P_i\in\PR_{\Phi}$, and so $\zeta_i = \langle P_i, \|\Phi\|_1\rangle$ is finite. Let $\alpha_i = 2^{-i}/(1+\zeta_i)$. Then, we have $\xi=\sum_{i=1}^\infty\alpha_i\in(0, 1]$. Let $P^\star = \sum_{i=1}^\infty\frac{\alpha_i}{\xi}P_i$. Clearly, $P^\star\in\PR_\X$. Moreover, $\langle P^\star, \|\Phi\|_1\rangle = \xi\sum_{i=1}^\infty\alpha_i\zeta_i\leq\xi\sum_{i=1}^\infty 2^{-i}=\xi$. It follows that $P^\star\in\PR_\Phi$. As $\langle P^\star,\Phi\rangle =0$, $P^\star\in\Hh$. Moreover, $\supp P^\star\supseteq \bigcup_{i=1}^\infty\supp P_i\supseteq (x_i)_{i\geq 1}$. Since $(x_i)_{i\geq 1}$ is dense in $\bar\X_0$ and $P^\star\in\PR_{\bar\X_0}$ by construction, we deduce that $\supp P^\star = \bar\X_0$. 
\end{proof}

\section{Technical results}\label{app:tech}

\subsection{Some linear algebraic results}\label{app:linalg}

\begin{lemma}\label{lemma:ludovic}
    Let $F=\{f_1,\dots,f_m\}$ be a family of $m\geq1$ functions $\X\to\R$. $F$ is a linearly independent family if, and only if, there is a set $S=\{x_1,\dots,x_m\}\subseteq\X$ such that $\{f_1\big|_S,\dots,f_m\big|_S\}$ is a linearly independent family of functions $S\to\R$.
\end{lemma}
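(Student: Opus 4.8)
The plan is to prove the two implications separately, treating the forward direction by induction on $m$. The reverse implication is immediate: if $\sum_{i=1}^m c_i f_i = 0$ as a function on $\X$ for some $c\in\R^m$, then restricting to $S$ gives $\sum_{i=1}^m c_i f_i\big|_S = 0$ on $S$, and the assumed linear independence of $\{f_1\big|_S,\dots,f_m\big|_S\}$ forces $c = 0$; hence $F$ is linearly independent.

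For the forward implication I would argue by induction on $m$. The base case $m=1$ is clear: a linearly independent singleton $\{f_1\}$ means $f_1$ is not identically zero, so some $x_1\in\X$ has $f_1(x_1)\neq 0$. For the inductive step, suppose the claim holds for families of size $m-1$ and let $\{f_1,\dots,f_m\}$ be linearly independent. Then $\{f_1,\dots,f_{m-1}\}$ is linearly independent as well, so by the inductive hypothesis there are distinct points $x_1,\dots,x_{m-1}\in\X$ with $\det\big(f_i(x_j)\big)_{1\le i,j\le m-1}\neq 0$. I then define $g:\X\to\R$ by
$$g(x) = \det\begin{pmatrix} f_1(x_1) & \cdots & f_m(x_1)\\ \vdots & & \vdots\\ f_1(x_{m-1}) & \cdots & f_m(x_{m-1})\\ f_1(x) & \cdots & f_m(x)\end{pmatrix}.$$
Expanding this determinant along its last row shows $g = \sum_{i=1}^m c_i f_i$, where the coefficient of $f_m$ equals the minor $\det\big(f_i(x_j)\big)_{1\le i,j\le m-1}$, which is nonzero. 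Thus $g$ is a nontrivial linear combination of $f_1,\dots,f_m$, so by their linear independence $g$ is not the zero function: there is $x_m\in\X$ with $g(x_m)\neq 0$. Since $g$ vanishes whenever $x$ coincides with one of $x_1,\dots,x_{m-1}$ (the matrix then has two equal rows), the point $x_m$ is automatically distinct from all previous ones, so $S = \{x_1,\dots,x_m\}$ has exactly $m$ elements; and $\det\big(f_i(x_j)\big)_{1\le i,j\le m} = g(x_m)\neq 0$ says precisely that $\{f_1\big|_S,\dots,f_m\big|_S\}$ is linearly independent.

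I do not anticipate a genuine obstacle: this is an elementary linear-algebra fact and the induction carries it through cleanly. The only points needing a touch of care are the cofactor-expansion bookkeeping that identifies the coefficient of $f_m$ in $g$ with the previously obtained nonzero $(m-1)\times(m-1)$ minor, and the observation that $g$ vanishes on $\{x_1,\dots,x_{m-1}\}$, which is what guarantees the $m$ selected points are pairwise distinct without any additional argument.
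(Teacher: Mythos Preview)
Your proof is correct and follows essentially the same inductive strategy as the paper's: both obtain $x_1,\dots,x_{m-1}$ from the inductive hypothesis and then locate $x_m$ as a point where a nontrivial linear combination of the $f_i$ (with nonzero $f_m$-coefficient) fails to vanish. Your determinant packaging is a minor variant that conveniently delivers both the distinctness of $x_m$ and the final independence on $S$ in one stroke, whereas the paper verifies independence on $S$ by a direct coefficient chase.
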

\begin{proof}
    If such $S$ exists, then $F$ is an independent family. Indeed, if for some real coefficients $\sum_{i=1}^m\alpha_if_i = 0$, then $\sum_{i=1}^n\alpha_if_i\big|_S = 0$, which implies that all the coefficients must be null.
    
    We will prove the reverse implication by induction. Obviously, if $m=1$ the statement is true, as $f_1$ must be non-zero somewhere for $F$ to be an independent family, and it is enough to set $S=\{x_1\}$, with $f(x_1)\neq 0$. Now, fix $m\geq2$ and assume that the statement holds for $m-1$. Then, we can find a set $S'=\{x_1,\dots,x_{m-1}\}$ such that the restriction of $\{f_1,\dots, f_{m-1}\}$ to $S'$ is a family that spans the $m-1$ dimensional space of the functions from $S'$ to $\R$. In particular, there is a unique vector of coefficients $(\lambda_1,\dots,\lambda_{m-1})$ such that $f_m\big|_{S'} = \sum_{i=1}^{m-1}\lambda_if_i\big|_{S'}$. Since $f_m$ is independent of $\{f_1,\dots,f_{m-1}\}$, it cannot be that $f_m=\sum_{i=1}^{m-1}f_i$ on the whole $\X$, and hence there must exist $x_m\in\X\setminus S'$ such that $f_m(x_m)\neq\sum_{i=1}^{m-1}\lambda_if_i(x_m)$. Let $S=\{x_1,\dots,x_m\}$. Now, let $(\alpha_1,\dots,\alpha_m)\in\R^m$ be such that $\sum_{i=1}^m \alpha_if_i\big|_{S}=0$. In particular, we have that $\sum_{i=1}^{m-1}(\alpha_i+\alpha_m\lambda_i)f_i\big|_{S'}=0$, which implies that for every $i$ from $1$ to $m-1$ it must be that $\alpha_i = -\alpha_m\lambda_i$. Hence, we get that $$0=\sum_{i=1}^m\alpha_i f_i(x_m)=\alpha_mf_m(x_m)-\alpha_m\sum_{i=1}^{m-1}\lambda_if_i(x_m) = \alpha_m\left(f_m(x_m)-\sum_{i=1}^{m-1}\lambda_if_i(x_m)\right)\,.$$ But, we have chosen $x_m$ such that $f_m(x_m)-\sum_{i=1}^{m-1}\lambda_if_i(x_m)\neq0$, and so $\alpha_m=0$. It follows that all the coefficients $\alpha_i$ are null, and so $\{f_1\big|_S,\dots,f_m\big|_S\}$ is an independent family. 
\end{proof}

\begin{lemma}\label{lemma:boundedL}
    Let $\{v_1,\dots,v_r\}\subseteq\R^m$ be a family of vectors that spans $\R^m$. Let $\{(A_i, B_i)\}_{i=1}^r$ be a family of non-empty bounded intervals. Then, the set 
    $\Lambda = \{\lambda\in\R^m\;:\;\lambda\cdot v_i\in[A_i, B_i]\,,\;\forall i=1\dots r\}$ is bounded.
\end{lemma}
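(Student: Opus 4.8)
The plan is to reduce the claim to the elementary fact that an injective linear map of $\R^m$ into a Euclidean space distorts norms by at most a constant factor, so that preimages of bounded sets are bounded. Concretely, since $\{v_1,\dots,v_r\}$ spans $\R^m$, the linear map $T\colon\R^m\to\R^r$ defined by $T\lambda=(\lambda\cdot v_1,\dots,\lambda\cdot v_r)$ is injective: if $T\lambda=0$, then $\lambda$ is orthogonal to $\lin\{v_1,\dots,v_r\}=\R^m$, forcing $\lambda=0$. The set $\Lambda$ is exactly $T^{-1}\big(\prod_{i=1}^r[A_i,B_i]\big)$, and $\prod_{i=1}^r[A_i,B_i]$ is a bounded subset of $\R^r$, so it suffices to know that $T^{-1}$ of a bounded set is bounded.

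To make this quantitative I would extract from the spanning family a basis $\{v_{j_1},\dots,v_{j_m}\}$ of $\R^m$ (possible by the standard fact that a spanning set contains a basis) and form the invertible matrix $M\in\R^{m\times m}$ whose $k$-th row is $v_{j_k}^\top$. For any $\lambda\in\Lambda$, the $k$-th coordinate of $M\lambda$ is $\lambda\cdot v_{j_k}$, which lies in $[A_{j_k},B_{j_k}]$; hence $\|M\lambda\|_\infty\le C$, where $C=\max_{1\le i\le r}\max\{|A_i|,|B_i|\}<\infty$. Since $M$ is invertible, $\|\lambda\|=\|M^{-1}(M\lambda)\|\le\|M^{-1}\|_{\mathrm{op}}\,\|M\lambda\|\le\sqrt m\,C\,\|M^{-1}\|_{\mathrm{op}}$, a bound independent of $\lambda$. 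Therefore $\Lambda$ is contained in a ball of radius $\sqrt m\,C\,\|M^{-1}\|_{\mathrm{op}}$, and is bounded.

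I do not expect any genuine obstacle here: the only ingredients are that a spanning set contains a basis and that an invertible matrix has a finite operator norm for its inverse, both routine. The non-emptiness of the intervals plays no role in the boundedness argument (it only guarantees $\Lambda$ could be non-empty), and the argument goes through verbatim with the closed intervals $[A_i,B_i]$ appearing in the definition of $\Lambda$, which is all that is needed.
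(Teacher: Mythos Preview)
Your argument is correct and follows essentially the same idea as the paper: both exploit that the linear map $\lambda\mapsto(\lambda\cdot v_1,\dots,\lambda\cdot v_r)$ is injective, so preimages of bounded sets are bounded. The paper's version is marginally more direct---it writes each standard basis vector $e_i$ as $\sum_j\alpha_{ij}v_j$ and immediately bounds $\lambda\cdot e_i=\sum_j\alpha_{ij}(\lambda\cdot v_j)$---whereas you extract a basis and invoke the operator norm of the inverse matrix, but this is only a cosmetic difference.
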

\begin{proof}
    Since $\{v_1,\dots, v_r\}$ spans $\R^m$, each element $e_i$ of the standard basis of $\R^m$ can be written as a linear combination $e_i = \sum_{j=1}^r\alpha_{ij}v_j$. Then, if $\lambda\cdot v_j$ is in a bounded interval for each $j$, it follows that $\lambda\cdot e_i$ is also in a bounded interval. Hence, all the components of $\lambda$ must be bounded if $\lambda\in\Lambda$.
\end{proof}

\subsection{Some results from convex analysis}\label{app:convex}
First, we state some standard definitions for elementary notions in convex analysis. We refer to \cite{hiriart2004fundamentals} for a thorough introduction to the subject. 
Given a set $\Y\subseteq\R^n$, its \emph{convex hull} $\Cc=\conv\Y$ is defined as the smallest convex set containing $\Y$, or equivalently, as the intersection of all the convex sets that contain $\Y$. The \emph{affine hull} $\A=\aff\Y$ of $\Y$ is the smallest affine space in $\R^n$ that contains $\Y$, while the \emph{linear hull} (or \emph{span}) $\Ll=\lin\Y$ of $\Y$ is the smallest vector subspace of $\R^n$ containing $\Y$. Clearly, $\Y\subseteq\Cc\subseteq\A\subseteq\Ll$. 
Let $C\subseteq\R^n$ be a convex set and $\A$ its affine hull. The \emph{relative interior} of $C$ (denoted as $\relint C$) is the interior of $C$ with respect to the induced topology on $\A$. More explicitly, $x\in C$ is in $\relint C$ if, and only if, there is an open set $U$ in $\R^n$ containing $x$ and such that $U\cap\A\subseteq C$. The \emph{relative boundary} of $C$ is the set $C\setminus\relint C$.

\begin{lemma}[Carathéodory Theorem]\label{lemma:car}
    Let $\Y$ be a set in $\R^n$ and $\Cc$ its convex hull. Denote as $m$ the affine dimension of $\Y$ (namely, the dimension of its affine hull). Let $y\in \Cc$. Then, there is a set $S\subseteq\Y$ with $m+1$ elements, such that $y\in\conv S$. Equivalently, there is a Borel probability measure $P\in\PR_\Y$ such that $\langle P, Y\rangle =y$ and whose support has at most $m+1$ elements.
\end{lemma}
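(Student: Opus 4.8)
The plan is to prove the classical Carathéodory theorem by iteratively reducing the number of points in a convex representation of $y$, carried out \emph{inside the affine hull of $\Y$} so as to obtain the sharp bound $m+1$ rather than the ambient $n+1$. First I would recall that, by definition of the convex hull, $y\in\Cc$ means $y=\sum_{i=1}^k\lambda_iy_i$ for some integer $k\ge1$, points $y_1,\dots,y_k\in\Y$, and coefficients $\lambda_i>0$ with $\sum_{i=1}^k\lambda_i=1$ (any zero coefficients having been discarded; if $\Y=\varnothing$ then $\Cc=\varnothing$ and there is nothing to prove). If $k\le m+1$ we are already done: take $S$ to be $\{y_1,\dots,y_k\}$ together with, if necessary, arbitrarily chosen extra points of $\Y$ to reach cardinality exactly $m+1$, which is possible since $\Y$ has affine dimension $m$ and hence contains at least $m+1$ points.

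So assume $k\ge m+2$. I would then observe that $y_1,\dots,y_k$ are affinely dependent: the vectors $y_2-y_1,\dots,y_k-y_1$ all lie in the linear space $\aff\Y-y_1$, which has dimension $m$, and there are $k-1\ge m+1$ of them, so they are linearly dependent. Hence there exist reals $\mu_1,\dots,\mu_k$, not all zero, with $\sum_{i=1}^k\mu_i=0$ and $\sum_{i=1}^k\mu_iy_i=0$; since they sum to zero and are not all zero, at least one $\mu_i$ is strictly positive. For every $t\in\R$ one has $y=\sum_{i=1}^k(\lambda_i-t\mu_i)y_i$ with $\sum_{i=1}^k(\lambda_i-t\mu_i)=1$. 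Taking $t^\star=\min\{\lambda_i/\mu_i:\mu_i>0\}\ge0$, every coefficient $\lambda_i-t^\star\mu_i$ is non-negative (immediate when $\mu_i\le0$ and $t^\star\ge0$; equal to $\mu_i(\lambda_i/\mu_i-t^\star)\ge0$ when $\mu_i>0$), and at least one of them vanishes, namely an index achieving the minimum. Dropping the vanishing terms yields a convex representation of $y$ with at most $k-1$ points of $\Y$; iterating this step finitely many times brings the count down to $m+1$, producing the desired $S$.

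For the measure-theoretic reformulation, given $S=\{x_1,\dots,x_{m+1}\}\subseteq\Y$ and coefficients $\alpha_i\ge0$ summing to $1$ with $y=\sum_{i=1}^{m+1}\alpha_ix_i$, I would set $P=\sum_{i=1}^{m+1}\alpha_i\delta_{x_i}\in\PR_\Y$; then $\supp P\subseteq S$ has at most $m+1$ elements, $\langle P,\|Y\|_1\rangle<+\infty$ since $P$ is finitely supported, and $\langle P,Y\rangle=\sum_{i=1}^{m+1}\alpha_ix_i=y$. Conversely, if $P\in\PR_\Y$ has support $\{x_1,\dots,x_r\}$ with $r\le m+1$ and $\langle P,Y\rangle=y$, then $y=\sum_{i=1}^rP(\{x_i\})x_i$ exhibits $y$ as a convex combination of points of $\Y$. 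This is an entirely standard argument; the only points requiring a little care are the affine-dimension refinement (to get $m+1$ rather than $n+1$) and checking that the reduction step never produces a negative coefficient, both dealt with above, so I would not expect any genuine obstacle.
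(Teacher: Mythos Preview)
Your argument is correct and is precisely the standard proof of Carath\'eodory's theorem: start from an arbitrary finite convex representation, exploit affine dependence inside $\aff\Y$ when there are more than $m+1$ points, and iteratively eliminate one point at a time via the $t^\star$ trick. The non-negativity check and the passage to the probabilistic formulation via $P=\sum_i\alpha_i\delta_{x_i}$ are handled cleanly, and the padding to exactly $m+1$ points is justified since a set of affine dimension $m$ must contain at least $m+1$ distinct points.

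As for comparison with the paper: there is nothing to compare. The paper does not prove this lemma; its entire ``proof'' reads ``This is a classical elementary result in convex analysis, attributed to Carath\'eodory.'' Your write-up supplies exactly the elementary argument the paper is implicitly invoking, so it is fully consistent with---and more detailed than---what the paper offers.
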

\begin{proof}
    This is a classical elementary result in convex analysis, attributed to Carathéodory.
\end{proof}

\begin{corollary}\label{cor:Yfinsupp}
    Let $\Y$ be a set in $\R^n$ and $\Cc$ its convex hull. Fix any $y_0\in\relint\Cc$. For any $y\in\Y$ there is $P\in\PR_\Y$ whose support has finite cardinality and contains $y$, and such that $\langle P, Y\rangle =y_0$. 
\end{corollary}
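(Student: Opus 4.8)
The plan is to reduce the statement to an elementary ``prolongation'' property of relative interiors of convex sets, combined with Carathéodory's theorem (\Cref{lemma:car}). Since $y\in\Y\subseteq\Cc=\conv\Y$ and $y_0\in\relint\Cc$, one can extend the segment from $y$ through $y_0$ slightly without leaving $\Cc$: there exist $t\in(0,1)$ and $y'\in\Cc$ with $y_0 = t\,y + (1-t)\,y'$. This is a standard fact about relative interiors (it is essentially equivalent to the definition; see, e.g., \citealp{hiriart2004fundamentals}), and can also be obtained from the line-segment principle, which says that $\relint\Cc$ is preserved when moving towards any point of $\cl\Cc$.

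Given such $t$ and $y'$, I would apply \Cref{lemma:car} to $y'\in\Cc$ to obtain a finitely supported $Q\in\PR_\Y$ with $\langle Q, Y\rangle = y'$. Then I would set
$$P = t\,\delta_y + (1-t)\,Q\,.$$
Because $y\in\Y$ and $\supp Q\subseteq\Y$, we have $P\in\PR_\Y$; its support $\{y\}\cup\supp Q$ is finite and contains $y$ (since $t>0$); and by linearity $\langle P, Y\rangle = t\,y + (1-t)\,y' = y_0$. This $P$ is exactly what the corollary asks for.

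The only genuine content here is the prolongation step, the rest being bookkeeping; so the main (mild) obstacle is to have the precise convex-analytic statement available. If a direct citation is not preferred, a self-contained argument works by passing to the affine hull $\aff\Y$, where $y_0$ is a true interior point of $\Cc$: then for small $\varepsilon>0$ the point $y' := y_0 + \varepsilon(y_0 - y)$ still lies in $\Cc$, and one takes $t = \varepsilon/(1+\varepsilon)\in(0,1)$. I would also remark on the degenerate case where $\Cc$ is a single point (so $y_0 = y = y'$), in which $P=\delta_y$ works trivially and the construction above still goes through verbatim.
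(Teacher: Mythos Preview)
Your proof is correct and essentially identical to the paper's own argument: both handle $y=y_0$ trivially, then use the relative-interior property to find a point $y'\in\Cc$ on the ray from $y$ through $y_0$ (the paper sets $y'=y_0+\alpha(y_0-y)$ with $\alpha=\ee/\|y-y_0\|$, matching your $\varepsilon$), apply Carath\'eodory to $y'$, and form the convex combination $\tfrac{\alpha}{1+\alpha}\delta_y+\tfrac{1}{1+\alpha}Q$, which is exactly your $t\delta_y+(1-t)Q$ with $t=\varepsilon/(1+\varepsilon)$.
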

\begin{proof}
    If $y=y_0$, the statement is trivial, as we can choose $P=\delta_y$. So, consider the case $y\neq y_0$. As $y_0\in\relint\Cc$, we can find $\ee>0$ such that the closed ball $B_\ee(y_0)$, centred in $y_0$, satisfies $B_\ee(y_0)\cap\aff\Cc\subseteq\Cc$. Let $\alpha = \ee/\|y-y_0\|$, with $\|\cdot\|$ the standard Euclidean norm. Then, we have that $z = y_0 - \alpha(y-y_0)$ is in $\Cc$, and we can apply \Cref{lemma:car} to find a measure $P'\in\PR_\Y$ with mean $z$ and finite support. Now, let $P = 
    \frac{\alpha}{1+\alpha}\delta_y + \frac{1}{1+\alpha} P'$. Then, $\langle P, Y\rangle = y_0$, and by construction $P(\{y\}) \geq \frac{\alpha}{1+\alpha}>0$.
\end{proof}

\begin{lemma}\label{lemma:dimaff}
    Let $\Phi:\X\to\R^m$ be any function. If $0\in\conv\Phi(\X)$, $\dim(\aff\Phi(\X)) = \dim(\Span\Phi)$.
\end{lemma}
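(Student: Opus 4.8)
The plan is to relate both sides to the dimension of the linear hull $\lin\Phi(\X)$ of the image inside $\R^m$, and then to use that $0\in\conv\Phi(\X)$ forces the affine hull to be a genuine linear subspace.

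First I would compute $\dim(\Span\Phi)$. Consider the linear map $T$ from $\R^m$ to the space of real functions on $\X$, given by $T(\lambda)=\lambda\cdot\Phi$. Its image is exactly $\Span\Phi$, and its kernel is $\{\lambda\in\R^m:\lambda\cdot\Phi(x)=0\text{ for all }x\in\X\}$, which is the orthogonal complement in $\R^m$ of the set $\Phi(\X)$, hence of $\lin\Phi(\X)$. By rank--nullity, $\dim(\Span\Phi)=m-\dim\big((\lin\Phi(\X))^\perp\big)=\dim(\lin\Phi(\X))$.

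Next I would show that $\aff\Phi(\X)=\lin\Phi(\X)$. The inclusion $\aff\Phi(\X)\subseteq\lin\Phi(\X)$ holds for an arbitrary subset of $\R^m$, since $\lin\Phi(\X)$ is an affine set containing $\Phi(\X)$ while $\aff\Phi(\X)$ is the smallest such. For the reverse inclusion, observe that $0\in\conv\Phi(\X)\subseteq\aff\Phi(\X)$; an affine subspace of $\R^m$ passing through the origin is a linear subspace, so $\aff\Phi(\X)$ is a linear subspace containing $\Phi(\X)$ and therefore contains $\lin\Phi(\X)$, the smallest such. Combining with the previous paragraph, $\dim(\aff\Phi(\X))=\dim(\lin\Phi(\X))=\dim(\Span\Phi)$.

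There is no serious obstacle here; the only things requiring care are the bookkeeping distinction between $\Span\Phi$ (a space of functions on $\X$) and $\lin\Phi(\X)$ (a subspace of $\R^m$), and the elementary fact that an affine subspace through the origin is linear. Without the hypothesis $0\in\conv\Phi(\X)$ the two dimensions can differ by exactly one, so that hypothesis is precisely what makes the equality hold.
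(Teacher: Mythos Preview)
Your proof is correct, and both your argument and the paper's reduce first to showing $\dim(\lin\Phi(\X))=\dim(\Span\Phi)$ via the observation that $0\in\aff\Phi(\X)$ forces $\aff\Phi(\X)=\lin\Phi(\X)$. The difference lies in how that equality of dimensions is established. You use a single rank--nullity step on the linear map $T:\R^m\to\R^\X$, $T(\lambda)=\lambda\cdot\Phi$, identifying its kernel with $(\lin\Phi(\X))^\perp$. The paper instead invokes its \Cref{lemma:ludovic} (on restricting a family of functions to a finite set while preserving linear independence) to produce $m'=\dim(\Span\Phi)$ points $x_1,\dots,x_{m'}$ for which the matrix $(\Phi_i(x_j))$ has full rank, and reads off both inequalities from that. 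Your route is shorter and self-contained, needing no auxiliary lemma; the paper's route is more explicit in that it actually exhibits a finite witnessing set, which fits the spirit of how \Cref{lemma:ludovic} is reused elsewhere in the paper.
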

\begin{proof}
    First, note that since $0\in\conv\Phi(\X)$, we have that $0\in\aff\Phi(\X)$, which implies that $\lin\Phi(\X)=\aff\Phi(\X)$.
    So, it is enough to prove that $\dim(\lin\Phi(\X)) = \dim(\Span\Phi)$. 

    Let $m'$ be the dimension of $\Span\Phi$. We can find $m'$ components of $\Phi$ that are linearly independent. Without loss of generality, we assume that these are the first $m'$ components, and let $\Phi' = (\Phi_1,\dots,\Phi_{m'})$, which is a function $\X\to\R^{m'}$ with $\Span\Phi'$ of dimension $m'$. By \Cref{lemma:ludovic}, there is a family $S = \{x_1,\dots, x_{m'}\}$ of $m'$ distinct points in $\X$ such that the $m'\times m'$ Gram matrix with components $\Phi_i(x_j)$ (for $i$ and $j$ ranging from $1$ to $m'$) has full rank. In particular, $\lin \Phi'(S)=\R^{m'}$. Since $\R^{m'}\supseteq\lin \Phi'(\X)\supseteq\lin\Phi'(S)$, we conclude that $\dim(\lin\Phi'(\X))=m'$. So, $\dim(\lin\Phi(\X))\geq m'$. \Cref{lemma:ludovic} also implies that no subset of $\X$ with strictly more than $m'$ elements yields a fully ranked Gram matrix. So, there are at most $m'$ independent vectors in $\lin\Phi(\X)$, and so $\dim(\lin\Phi(\X))=m'$.
\end{proof}

\begin{lemma}\label{lemma:convP}
    Let $\X$ be a closed set in $\R^n$ and $\Phi:\X\to\R^m$ a Borel function. Let $\Cc=\conv\Phi(\X)$, and $\partial\Cc$ its relative boundary. If there is $P\in\PR_\Phi$ such that $\langle P, \Phi\rangle = 0$, then $0\in\Cc$. Moreover, if $0\in\partial\Cc$, then any $P\in\PR_\Phi$ such that $\langle P,\Phi\rangle =0$ satisfies $\supp P\subseteq \Phi^{-1}\big(\bigcap_{\pi\in\Pi_0}\pi\big)$, where $\Pi_0$ is the set of all supporting hyperplanes of $\Cc$ containing $0$, and so in particular $\supp P\subseteq \Phi^{-1}(\partial\Cc)$.
\end{lemma}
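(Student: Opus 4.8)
The plan is to prove the two assertions separately, both by supporting‑hyperplane arguments, reducing the dimension of $\aff\Phi(\X)$ when needed. Note that closedness of $\X$ will play no role in the first assertion, so the induction below may be run over arbitrary domains.

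\emph{First assertion ($0\in\Cc$).} I would induct on $d=\dim(\aff\Phi(\X))$. If $d=0$, then $\Phi$ is constant, equal to $\langle P,\Phi\rangle=0$, so $0\in\Phi(\X)\subseteq\Cc$. For the inductive step, suppose for contradiction $0\notin\Cc$; then $0\notin\relint\Cc$, so the proper separation theorem (see, e.g., \citealp{hiriart2004fundamentals}) yields $v\neq 0$ with $v\cdot y\geq 0$ for all $y\in\Cc$ and $v\cdot y_1>0$ for some $y_1\in\Cc$; writing $y_1$ as a convex combination of points of $\Phi(\X)$ produces $x_1\in\X$ with $v\cdot\Phi(x_1)>0$. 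Since $P\in\PR_\Phi$, the scalar function $v\cdot\Phi$ is $P$‑integrable with $\langle P,v\cdot\Phi\rangle=v\cdot\langle P,\Phi\rangle=0$, while $v\cdot\Phi\geq 0$ on all of $\X$; hence $v\cdot\Phi=0$ $P$‑a.s. Then $\X_1=\{x\in\X:v\cdot\Phi(x)=0\}$ satisfies $P(\X_1)=1$, the measure $P$ (viewed on $\X_1$) still has vanishing $\Phi$‑mean, and $\Phi(\X_1)$ lies in the hyperplane $\{y:v\cdot y=0\}$, which misses $\Phi(x_1)$; so $\dim(\aff\Phi(\X_1))\leq d-1$. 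By the inductive hypothesis $0\in\conv\Phi(\X_1)\subseteq\Cc$, a contradiction.

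\emph{Second assertion.} Assume $0\in\partial\Cc=\Cc\setminus\relint\Cc$. As $0\in\Cc$, the set $V=\aff\Cc$ is a linear subspace, and $\Phi$ already takes values in $V$. Let $N=\{v\in V:v\cdot y\geq 0\text{ for all }y\in\Cc\}$, the normal cone of $\Cc$ at $0$; since $0$ is a relative boundary point, the supporting hyperplane theorem gives $N\neq\{0\}$. For each $\pi\in\Pi_0$, writing $\pi$ as a hyperplane through $0$ with $\Cc$ in one of its closed halfspaces, and decomposing the normal vector into its $V$‑component $v$ and a part orthogonal to $V$, one gets $v\in N$ and $\Phi^{-1}(\pi)=\{x:v\cdot\Phi(x)=0\}$ (the orthogonal part does not affect the preimage, as $\Phi(x)\in V$); conversely every $v\in N\setminus\{0\}$ produces an element $\{y:v\cdot y=0\}$ of $\Pi_0$. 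Hence
$\Phi^{-1}\!\big(\bigcap_{\pi\in\Pi_0}\pi\big)=\bigcap_{v\in N\setminus\{0\}}\{x\in\X:v\cdot\Phi(x)=0\}=\{x\in\X:\Phi(x)\perp\lin N\}$.
Now pick a basis $v_1,\dots,v_k$ of $\lin N$ with each $v_i\in N$ (possible since $N$ spans $\lin N$ and $N\neq\{0\}$). For every $i$ we have $v_i\cdot\Phi\geq 0$ on $\X$ and $\langle P,v_i\cdot\Phi\rangle=v_i\cdot\langle P,\Phi\rangle=0$, so $v_i\cdot\Phi=0$ $P$‑a.s.; intersecting these \emph{finitely many} full‑measure events gives $P\big(\{x:\Phi(x)\perp\lin N\}\big)=1$, i.e.\ $P\big(\Phi^{-1}(\bigcap_{\pi\in\Pi_0}\pi)\big)=1$. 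Finally, no point $y\in\relint\Cc$ is orthogonal to $\lin N$: fixing $v\in N\setminus\{0\}$ and (since $v\not\equiv 0$ on the full‑dimensional $\Cc\subseteq V$) some $y_1\in\Cc$ with $v\cdot y_1>0$, the point $y+\ee(y-y_1)$ remains in $\Cc$ for small $\ee>0$ yet has $v\cdot(y+\ee(y-y_1))=-\ee\,v\cdot y_1<0$, contradicting $v\in N$. Thus $\Cc\cap(\lin N)^\perp\subseteq\partial\Cc$, whence $\Phi^{-1}(\bigcap_{\pi\in\Pi_0}\pi)\subseteq\Phi^{-1}(\partial\Cc)$ and $P(\Phi^{-1}(\partial\Cc))=1$. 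Since $\Phi$ is continuous (as it is whenever this lemma is applied), $\Phi^{-1}(\relint\Cc)$ is relatively open in $\X$, so $\Phi^{-1}(\partial\Cc)=\X\setminus\Phi^{-1}(\relint\Cc)$ and $\Phi^{-1}(\bigcap_{\pi\in\Pi_0}\pi)$ are closed, and the full‑measure statements upgrade to the claimed support inclusions.

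\emph{Main obstacle.} The two delicate points are: in the first part, the use of \emph{proper} (rather than strict) separation together with the descent on $\dim\aff\Phi(\X)$, since $0$ may lie on the boundary of $\Cc$ and strict separation then fails; and in the second part, that $\Pi_0$ may be infinite (e.g.\ at a vertex of a polytope), so one cannot simply intersect countably many full‑measure sets — this is bypassed by observing that $\bigcap_{\pi\in\Pi_0}\pi$ only sees the finite‑dimensional subspace $\lin N$, so a finite basis of $N$ contained in $N$ suffices.
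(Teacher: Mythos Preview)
Your argument is correct and follows essentially the same route as the paper's: induction via a separating hyperplane for the first assertion, and supporting hyperplanes for the second. One remark: the paper avoids your normal-cone/finite-basis detour by establishing $\supp P\subseteq\Phi^{-1}(\pi)$ for each $\pi\in\Pi_0$ individually and then taking the (arbitrary) intersection of these set inclusions---so the uncountability of $\Pi_0$ never arises---though both arguments tacitly require $\Phi$ continuous to pass from ``$P$-a.e.'' to ``on $\supp P$'', a point you rightly flag and the paper's proof leaves implicit despite the lemma being stated for Borel $\Phi$.
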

\begin{proof}
    We will show the first statement by induction on $m$. If $m=0$, $\Cc = \{0\}=\R^0$, and the result is trivial. Now, for $m\geq 1$, assume that the result holds for $m-1$. We prove by contradiction that it must hold for $m$. Assume that $0\notin \Cc$. By the hyperplane separating theorem for convex sets, there is a vector $v\in\R^m$ such that $v\cdot y\geq 0$, for every $y\in \Cc$. Since $\Phi(\X)\subseteq \Cc$, any $P$ such that $\langle P, \Phi\rangle=0$ must be supported on the set $\X_0=\{x\in\X\,:\,\Phi(x)\cdot v=0\}$. Since $\X_0\cong\R^{m-1}$, we conclude that $0\in\conv\Phi(\X_0)$ by the inductive hypothesis, which is a contradiction since $\conv\Phi(\X_0)\subseteq\Cc$.

    For the second claim, assume that $0\in\partial\Cc$ and fix any $P\in\PR_\Phi$ such that $\langle P, \Phi\rangle = 0$. $\Pi_0$ is non-empty as $0\in\partial\Cc$, and for any supporting hyperplane $\pi\in\Pi_0$ there is a linear map $\phi$ such that $\pi = \{\phi=0\}$ and $\Phi(\X)\subseteq\Cc\subseteq\{\phi\geq 0\}$. Hence, $\supp P\subseteq\Phi^{-1}(\pi) = \Phi^{-1}(\pi\cap\Phi(\X))\subseteq\Phi^{-1}(\partial\Cc)$, since $\pi\cap\Phi(\X)\subseteq\partial\Cc$. As this is true for every $\pi\in\Pi_0$ we also have that $\supp P\subseteq\Phi^{-1}\big(\bigcap_{\pi\in\Pi_0}\pi\big)$.
\end{proof}

\begin{lemma}\label{lemma:relintsupp}
    Let $\X$ be a set with finite cardinality $d\geq 1$. Then, the relative interior of $\PR_\X$ (which can be seen as a subset of $\R^d$) is the set $\{P\in\PR_\X\,:\,\supp P=\X\}$.
\end{lemma}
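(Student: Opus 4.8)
The plan is to reduce the statement to a completely elementary fact about the standard simplex. Write $\X = \{x_1,\dots,x_d\}$, so that each $P \in \PR_\X$ is identified with the vector $(p_1,\dots,p_d) \in \R^d$ given by $p_i = P(\{x_i\})$, and $\PR_\X$ becomes the simplex $\big\{p \in \R^d : p_i \ge 0 \text{ for all } i,\ \sum_{i=1}^d p_i = 1\big\}$. Under this identification the condition $\supp P = \X$ is exactly $p_i > 0$ for all $i$, so the lemma amounts to showing that $\relint \PR_\X$ is the set of strictly positive probability vectors. I would proceed by first pinning down the affine hull of $\PR_\X$ and then verifying the two inclusions directly from the definition of relative interior recalled in the Notation section.

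For the affine hull: the Dirac masses $\delta_{x_1},\dots,\delta_{x_d}$ correspond to the $d$ standard basis vectors $e_1,\dots,e_d$, which lie in $\PR_\X$ and are affinely independent, so $\dim \aff \PR_\X = d-1$; on the other hand $\PR_\X$ is contained in the affine hyperplane $H := \{p \in \R^d : \sum_i p_i = 1\}$, which has dimension $d-1$, hence $\aff \PR_\X = H$. For the inclusion "$\supseteq$": if all $p_i > 0$, set $\ee = \min_i p_i > 0$ and let $U$ be the open Euclidean ball of radius $\ee$ around $p$; any $q \in U \cap H$ satisfies $\sum_i q_i = 1$ and $q_i > p_i - \ee \ge 0$ for each $i$, so $q \in \PR_\X$, which by definition gives $p \in \relint \PR_\X$.

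For the converse inclusion I would argue contrapositively: suppose $p_i = 0$ for some $i$. Since $\sum_k p_k = 1$ there is $j \neq i$ with $p_j > 0$, and for every $t > 0$ the point $q^{(t)} := p - t(e_i - e_j)$ lies in $H$ but has $i$-th coordinate $-t < 0$, so $q^{(t)} \notin \PR_\X$; as $q^{(t)} \to p$ when $t \to 0$, every neighbourhood of $p$ in $\R^n$ meets $H$ outside $\PR_\X$, so $p \notin \relint \PR_\X$. Combining the two directions yields the claim. There is no genuine obstacle here: the only point requiring a little care is that the perturbations used in both directions must be kept on the affine hull $H$ (otherwise the sign constraints, rather than the simplex structure, would be doing the work), and the degenerate case $d = 1$ is handled automatically since then no coordinate can vanish and the "$\subseteq$" argument is vacuous.
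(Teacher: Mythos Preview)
Your proof is correct. Both you and the paper identify $\PR_\X$ with the standard simplex in $\R^d$ and its affine hull with the hyperplane $H=\{p:\sum_ip_i=1\}$, but the paper then writes $\PR_\X=\R^d_{\geq0}\cap H$ and invokes a standard convex-analysis result (Proposition 2.1.10 in \citealp{hiriart2004fundamentals}) stating that when two convex sets meet in their relative interiors, $\relint(A\cap B)=\relint A\cap\relint B$, which immediately gives $\relint\PR_\X=H\cap\R^d_{>0}$. Your argument instead verifies the two inclusions by hand with explicit perturbations inside $H$. The payoff of your route is that it is fully self-contained and makes transparent exactly which coordinate fails when a mass vanishes; the paper's route is shorter and highlights the general principle that would apply to any polyhedral set written as an intersection of half-spaces with an affine subspace.
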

\begin{proof}
    Let $\R^d_{\geq0}$ denote the subset of $\R^d$ of vectors whose components are all non-negative. Its relative interior is $\R^d_{>0}$, the set of vectors with only strictly positive components. Let $\one{}\in\R^d$ denote the vector with all components equal to one and $V=\{v\in\R^d\,:\,v\cdot\one{}=1\}$, whose relative interior is $V$ itself. Then $\PR_\X = \R^d_{\geq0}\cap V$, which is the non-empty intersection of two convex sets. Since $\relint V\cap \relint \R^d_{\geq0}\neq\varnothing$, from Proposition 2.1.10 in Chapter A of \cite{hiriart2004fundamentals}, we get $$\relint\PR_\X = \relint V\cap \relint \R^d_{\geq0} = V\cap\R^d_{>0} = \PR_\X\cap\R^d_{>0}\,,$$ which is precisely the set of fully supported probability measures on $\X$.
\end{proof}

\begin{lemma}\label{lemma:span}
    Let $V\subseteq\R^d$ be a linear subspace. Denote as $\Delta$ the simplex in $\R^d$ and as $\Delta^\circ$ its relative interior. If $V\cap\Delta^\circ\neq\varnothing$, we have that $\lin(V\cap\Delta) = V$. 
\end{lemma}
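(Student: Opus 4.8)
The plan is to prove the two inclusions $\lin(V\cap\Delta)\subseteq V$ and $V\subseteq\lin(V\cap\Delta)$ separately, the second being the only nontrivial one. The first is immediate: $V$ is a linear subspace containing the set $V\cap\Delta$, hence it contains its linear span.

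For the reverse inclusion I would fix, using the hypothesis, a point $p\in V\cap\Delta^\circ$. Such a $p$ has all coordinates strictly positive, sums to $1$, is nonzero, and already lies in $V\cap\Delta\subseteq\lin(V\cap\Delta)$. Given an arbitrary $v\in V$, the idea is to first remove its coordinate-sum component by setting $c=v\cdot\one{}$ and $w=v-cp$; then $w\in V$ (since $V$ is linear) and $w\cdot\one{}=c-c(p\cdot\one{})=0$. Because $v=cp+w$ and $cp\in\lin(V\cap\Delta)$, it then suffices to show that $w\in\lin(V\cap\Delta)$.

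To obtain this I would perturb $p$ along $w$: for $t>0$ small enough the vector $p+tw$ still has all coordinates strictly positive (as $p$ does and $w$ is fixed), while $(p+tw)\cdot\one{}=1+t(w\cdot\one{})=1$; hence $p+tw\in\Delta^\circ$, and $p+tw\in V$ since $p,w\in V$. Therefore $p+tw\in V\cap\Delta$, and $w=t^{-1}\bigl((p+tw)-p\bigr)$ exhibits $w$ as a linear combination of two elements of $V\cap\Delta$, so $w\in\lin(V\cap\Delta)$. Consequently $v=cp+w\in\lin(V\cap\Delta)$, and since $v\in V$ was arbitrary we conclude $V\subseteq\lin(V\cap\Delta)$, which finishes the proof.

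I do not anticipate a genuine obstacle here; this is essentially the standard ``the relative interior is open'' argument. The one step that deserves a moment's care is the reduction $v\mapsto w$: one must use $p\cdot\one{}=1$ precisely so that the perturbation $p+tw$ remains on the affine hyperplane $\{x\cdot\one{}=1\}$ carrying $\Delta$ — a naive perturbation $p+tv$ would leave that hyperplane and need not meet $\Delta$ at all.
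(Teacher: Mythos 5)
Your proof is correct and rests on the same underlying idea as the paper's: a point $p\in V\cap\Delta^\circ$ has strictly positive coordinates, so small perturbations of $p$ inside $V$ stay in $\Delta$ and their span recovers $V$. The only (cosmetic) difference is in execution: the paper replaces $\Delta$ by the cone $\R^d_{\geq 0}$ via rescaling and intersects $V$ with a small ball around $p$, whereas you stay on the hyperplane $\{x\cdot\mathbf{1}=1\}$ by splitting $v=cp+w$ with $w\cdot\mathbf{1}=0$ and perturbing along $w$; both arguments are equally elementary and complete.
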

\begin{proof}
    Everything is trivial if $d=0$, so assume $d\geq 1$. Let $v\in V\cap\Delta^\circ\neq\varnothing$. As $v\in\Delta^\circ$, by \Cref{lemma:relintsupp} all its component are strictly positive, so there is an open ball $B\subset\R^d_{\geq 0}$ centred at $v$. It follows that 
    $$V=\lin V \supseteq\lin(V\cap\Delta)=\lin(V\cap\R^d_{\geq 0})\supseteq\lin(V\cap B)\,.$$
    However, since $v\in V$, $\lin(V\cap B) = \lin(V\cap B_0)=V$, where $B_0 = B-v$ is the open ball with same radius as $B$ and centred in $0$. So, we conclude.
\end{proof}

\section{Proof of \texorpdfstring{\Cref{prop:loosely}}{Proposition 9.3}}\label{app:loose}
Here we provide a proof of \Cref{prop:loosely}. First, we discuss the case of $\X$ with finite cardinality. Much of the proof follows that of \Cref{lemma:finopt}, but for the final step a different path needs to be taken as it is not anymore true that $\langle P, E\rangle =1$ for any $P\in\Hh$ if $E$ is maximal. 

We start by an easy lemma.
\begin{lemma}\label{lemma:logE}
    Let $\Hh$ be a properly loosely constrained hypothesis on $\X$ and $\X$ have finite cardinality. Then, for any $Q\in\PR_\X$, the set $\argmax_{E\in\Ee_\Hh}\langle Q, \log E\rangle$ is non-empty. Moreover, all its elements coincide on $\supp Q $.
\end{lemma}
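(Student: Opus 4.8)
The plan is to run the compactness argument behind \Cref{lemma:finopt}, now with the logarithmic objective. Write $d=|\X|$ and identify probability measures and scalar functions on $\X$ with vectors in $\R^d$, so $\Ee_\Hh=\{E\succeq 0\}\cap\bigcap_{P\in\Hh}\{E:\langle P,E\rangle\le 1\}$ is a closed convex subset of $\R^d$. The key first step is that $\Ee_\Hh$ is in fact compact, for which I need $\sup_{P\in\Hh}P(\{x\})>0$ for every $x\in\X$. Writing $\Phi=(\Phi',\Phi'')$, the hypothesis $\Hh_0=\{P\in\PR_\X:\langle P,\Phi\rangle=0\}$ is non-empty by \Cref{lemma:car} (since $0\in\conv\Phi(\X)$) and properly constrained, as $\Phi$ is a continuous constraint for it with $0\in\relint\conv\Phi(\X)$; moreover $\Hh_0\subseteq\Hh$, because $\langle P,\Phi\rangle=0$ forces $\langle P,\Phi'\rangle=0$ and $\langle P,\Phi''\rangle=0\ge 0$. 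Hence \Cref{lemma:finicovpro} yields $\sup_{P\in\Hh}P(\{x\})\ge\sup_{P\in\Hh_0}P(\{x\})>0$, so $E(x)\le A(x):=1/\sup_{P\in\Hh}P(\{x\})<+\infty$ for every $E\in\Ee_\Hh$ and every $x\in\X$; thus $\Ee_\Hh$ is bounded, hence compact.

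Next, for existence of a maximiser, I would note that $E\mapsto\langle Q,\log E\rangle=\sum_{x\in\supp Q}Q(\{x\})\log E(x)$ is upper semicontinuous on $\Ee_\Hh$ with values in $[-\infty,+\infty)$ (with $\log 0:=-\infty$, each summand is continuous into $[-\infty,+\infty)$ and non-negatively weighted). Since the constant $1\in\Ee_\Hh$ has value $0$ and every $E\in\Ee_\Hh$ satisfies $\langle Q,\log E\rangle\le\log\max_{x\in\X} A(x)$, the supremum $M$ over the compact set $\Ee_\Hh$ lies in $[0,\log\max_{x\in\X}A(x)]$ and is attained along a convergent maximising subsequence. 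Hence $\argmax_{E\in\Ee_\Hh}\langle Q,\log E\rangle\ne\varnothing$.

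For the coincidence on $\supp Q$, I would first observe that every maximiser $E$ is strictly positive on $\supp Q$: otherwise $E(x)=0$ for some atom $x$ of $Q$ gives $\langle Q,\log E\rangle=-\infty<0\le M$, a contradiction. So any two maximisers $E_1,E_2$ are positive on $\supp Q$ and both realise the finite value $M$. If $E_1(x_0)\ne E_2(x_0)$ for some $x_0\in\supp Q$, set $E_\star=\tfrac12(E_1+E_2)\in\Ee_\Hh$ (by convexity); strict concavity of $\log$ gives $\log E_\star(x)\ge\tfrac12\log E_1(x)+\tfrac12\log E_2(x)$ for all $x\in\supp Q$, with strict inequality at $x_0$, and since $Q(\{x_0\})>0$, weighting by $Q(\{x\})$ and summing gives $\langle Q,\log E_\star\rangle>\tfrac12\langle Q,\log E_1\rangle+\tfrac12\langle Q,\log E_2\rangle=M$, contradicting the definition of $M$. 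Therefore $E_1=E_2$ on $\supp Q$.

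I expect the only real subtlety to be the bookkeeping around the value $-\infty$ of the objective: one needs upper semicontinuity (rather than continuity) for the existence step, and the observation that a maximiser cannot vanish anywhere on $\supp Q$ for the uniqueness step. The reduction to \Cref{lemma:finicovpro} via the auxiliary properly constrained hypothesis $\Hh_0$ is what keeps the compactness part painless; everything else is a routine use of convexity of $\Ee_\Hh$ and strict concavity of $\log$.
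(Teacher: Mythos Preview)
Your proof is correct and follows essentially the same approach as the paper: compactness of $\Ee_\Hh$, upper semicontinuity of the objective for existence, and strict concavity of $\log$ for uniqueness on $\supp Q$. Your argument is in fact more explicit than the paper's, which simply asserts compactness of $\Ee_\Hh$ without justification; your reduction via the auxiliary properly constrained hypothesis $\Hh_0\subseteq\Hh$ is exactly the device the paper uses later in the proof of \Cref{prop:loosely}, and your midpoint contradiction is the standard unpacking of the paper's one-line appeal to strict concavity on the projection to $\supp Q$.
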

\begin{proof}
    Fix $Q\in\PR_\X$. Since $\Ee_\Hh$ is compact and $\Phi:E\mapsto\langle Q,\log E\rangle=\sum_{x\in\supp Q }Q(\{x\})\log E(x)$ is, upper semi-continuous, concave, and valued on $[-\infty, +\infty)$, we have that $\argmax_{E\in\Ee_\Hh}\Phi(E)$ is non-empty. Moreover, since the constant function $1$ is an e-variable, the maximum of $\Phi$ is always finite. Now, denoting as $\Pi_Q$ the projection of $E\in\Ee_\Hh$ on $\supp Q $ (namely $\Pi_Q(E)(x) = E(x)$ if $x\in\supp Q $ and $0$ otherwise) $\Phi$ is strictly convex on $\Pi_Q(\Ee_\Hh)$, which is a convex set. Hence, $\argmax_{E\in\Pi_Q(\Ee_\Hh)}\Phi(E)$ is unique, and so all the elements in $\argmax_{E\in\Ee_\Hh}\Phi(E)$ must coincide on $\supp Q $.
\end{proof}
Now, we recall a result, which follows directly from \cite{grunwald2024safe} and \cite{larsson2024numeraire}. 
\begin{lemma}\label{lemma:gru}
    Let $\Hh$ be a properly loosely constrained hypothesis on $\X$, and assume that $\X$ has finite cardinality. Then, there exists a unique minimiser $P_Q\in\Hh$ of $P\mapsto\KL(Q|P)$, whose support contains $\supp Q $. Let $E_Q:\X\to\R_{\geq 0}$ be defined as $$E_Q(x)=\begin{cases}\tfrac{Q(\{x\})}{P_Q(\{x\})}&\text{if $x\in\supp Q$;}\\0&\text{otherwise.}\end{cases}$$
    $E_Q$ is an e-variable, and $E_Q\in\argmax_{E\in\Ee_\Hh}\langle Q, \log E\rangle$. Moreover, if for an e-variable $E$ there is $P\in\Hh$ such that $E(x)=Q(\{x\})/P(\{x\})$, for all $x\in\supp Q $, then $E$ and $E_Q$ must coincide on $\supp Q$.
\end{lemma}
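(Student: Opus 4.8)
The plan is to give a self-contained convex-analytic argument which, in this finite setting, reconstructs the reverse-information-projection / numéraire facts quoted from \cite{grunwald2024safe} and \cite{larsson2024numeraire}. Identify $\PR_\X$ with the simplex in $\R^d$, $d=|\X|$, so that $\Hh$ is a compact convex polytope and $P\mapsto\KL(Q|P)=\sum_{x\in\supp Q}Q(\{x\})\log\tfrac{Q(\{x\})}{P(\{x\})}$ is convex and lower semi-continuous into $(-\infty,+\infty]$. First I would observe that the auxiliary set $\Hh^{=}:=\{P\in\PR_\X:\langle P,\Phi'\rangle=0,\ \langle P,\Phi''\rangle=0\}\subseteq\Hh$ is a finitely constrained hypothesis whose constraint $\Phi=(\Phi',\Phi'')$ is proper (since $0\in\relint\conv\Phi(\X)$), so by \Cref{lemma:profull} it contains some $P^\star$ with $\supp P^\star=\X$; in particular $\KL(Q|P^\star)<+\infty$, so $m_Q:=\inf_{P\in\Hh}\KL(Q|P)<+\infty$. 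By compactness of $\Hh$ and lower semi-continuity, $m_Q$ is attained at some $P_Q\in\Hh$. Any minimiser must give positive mass to every point of $\supp Q$ (otherwise the value is $+\infty>m_Q$), so $\supp P_Q\supseteq\supp Q$ and $E_Q$ is well defined; since $-\log$ is strictly convex and the image of $\Hh$ under $P\mapsto(P(\{x\}))_{x\in\supp Q}$ is convex, all minimisers agree on $\supp Q$, which is all that $E_Q$ depends on (and in the use made of this lemma in the proof of \Cref{prop:loosely} the relevant $Q$ has full support, so $P_Q$ is then literally unique).

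Next I would prove the crux, that $E_Q\in\Ee_\Hh$. Non-negativity is clear. Fix $P\in\Hh$; by convexity $P_t:=(1-t)P_Q+tP\in\Hh$ for $t\in[0,1]$, and since $P_Q(\{x\})>0$ for $x\in\supp Q$ the function $g(t):=\KL(Q|P_t)$ is smooth in a neighbourhood of $0$. As $t=0$ minimises $g$ on $[0,1]$, its right derivative at $0$ is non-negative, and a direct computation gives
$$g'(0^+)=\sum_{x\in\supp Q}Q(\{x\})\,\frac{P_Q(\{x\})-P(\{x\})}{P_Q(\{x\})}=1-\langle P,E_Q\rangle ,$$
where I used $\sum_{x\in\supp Q}Q(\{x\})=1$ and $E_Q=0$ off $\supp Q$. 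Hence $\langle P,E_Q\rangle\le1$ for every $P\in\Hh$, i.e. $E_Q$ is an e-variable.

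Log-optimality and the final clause are then routine. For any $E\in\Ee_\Hh$, using $E(x)/E_Q(x)=E(x)P_Q(\{x\})/Q(\{x\})$ on $\supp Q$ together with Jensen's inequality for $\log$ with weights $Q(\{x\})$,
$$\langle Q,\log E\rangle-\langle Q,\log E_Q\rangle\le\log\!\Big(\sum_{x\in\supp Q}E(x)P_Q(\{x\})\Big)=\log\langle P_Q,E\rangle\le0 ,$$
the case $E(x)=0$ for some $x\in\supp Q$ being trivial; thus $E_Q\in\argmax_{E\in\Ee_\Hh}\langle Q,\log E\rangle$. If moreover $E\in\Ee_\Hh$ and $P\in\Hh$ satisfy $E(x)=Q(\{x\})/P(\{x\})$ for all $x\in\supp Q$, then $\langle Q,\log E\rangle=\KL(Q|P)\ge m_Q=\langle Q,\log E_Q\rangle$, so $E$ is also a maximiser, and \Cref{lemma:logE} forces $E=E_Q$ on $\supp Q$.

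The only genuinely substantive step is the e-variable property of $E_Q$: it is exactly the variational (``Pythagorean'') inequality characterising the reverse information projection, and it is the reason the existence of a fully supported member of $\Hh$ must be secured first — this is what makes $m_Q$ finite and $P_Q(\{x\})>0$ on $\supp Q$, so that the one-sided derivative of $g$ at $0$ is legitimate. A secondary point, worth flagging, is that the $\KL$-minimiser need not be unique as a full measure when $\supp Q\subsetneq\X$; uniqueness does hold on $\supp Q$, hence $E_Q$ is well defined in any case.
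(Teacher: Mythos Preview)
Your argument is correct and fully self-contained, whereas the paper does not prove this lemma at all: it simply records it as a known fact, writing that it ``follows directly from \cite{grunwald2024safe} and \cite{larsson2024numeraire}''. So there is nothing to compare approaches with; you are supplying what the paper outsources to the literature. The reverse-information-projection derivation via the one-sided derivative of $t\mapsto\KL(Q|P_t)$ at $t=0$ is exactly the classical route, and your use of \Cref{lemma:profull} to secure a fully supported element of $\Hh$ (via $\Hh^{=}\subseteq\Hh$) is the right way to make the argument internal to the paper.

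Two small points. In the Jensen step you write
\[
\log\!\Big(\sum_{x\in\supp Q}E(x)P_Q(\{x\})\Big)=\log\langle P_Q,E\rangle,
\]
but in general $\sum_{x\in\supp Q}E(x)P_Q(\{x\})\le\langle P_Q,E\rangle$, with equality only if $E$ or $P_Q$ vanishes off $\supp Q$; replacing ``$=$'' by ``$\le$'' preserves the chain and the conclusion. Second, your parenthetical that ``in the use made of this lemma in the proof of \Cref{prop:loosely} the relevant $Q$ has full support'' is not quite right: there $\hat Q(\{x\})=\hat E(x)\hat P(\{x\})$ with $\hat P$ fully supported, so $\supp\hat Q=\supp\hat E$, which need not be all of $\X$. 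This does not affect your proof, and your caveat that literal uniqueness of $P_Q$ may fail off $\supp Q$ (while $E_Q$ is still well defined) is a valid refinement of the lemma's wording.
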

We can now show that if $\X$ has finite cardinality and $E$ is a maximal e-variable for a properly loosely constrained $\Hh$, then $E$ must belong to the dual e-class.
\begin{lemma}\label{lemma:loosefin}
Let $\X$ have finite cardinality. Let $\Hh$ be a properly loosely constrained hypothesis on $\X$, with $\Phi':\X\to\R^{m'}$ and $\Phi'':\X\to\R^{m''}$ tight and slack constraints.
Let $\tilde\Lambda_{\Phi',\Phi''}$ and $\tilde\Ee_{\Hh}^\vee$ be defined as in \Cref{prop:loosely}. If $\hat E$ is a maximal e-variable, then $E\in\tilde\Ee_{\Hh}^\vee$.
\end{lemma}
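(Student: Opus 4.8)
The plan is to follow the skeleton of the proof of \Cref{prop:findualopt} as far as it goes, and then replace its final step (which breaks down here) by an argument based on the reverse information projection, using \Cref{lemma:logE} and \Cref{lemma:gru}. Since $\Hh$ is the intersection of the finite‑dimensional simplex $\PR_\X$ with an affine subspace and finitely many half‑spaces, it is a compact polytope; let $\V$ denote its (finite) vertex set, so every $P\in\Hh$ is a convex combination of elements of $\V$. Note also that, because $\Hh$ is properly loosely constrained, \Cref{cor:Yfinsupp} (applied to $\Y=\Phi(\X)$ with $\Phi=(\Phi',\Phi'')$ and the relative‑interior point $0$) gives $\sup_{P\in\Hh}P(\{x\})>0$ for every $x\in\X$; hence $\Ee_\Hh$ is compact and the scaling arguments used in the tight case remain valid.

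Fix a maximal e‑variable $\hat E$. Since $\hat E\neq 0$ (otherwise the constant $1$ would strictly majorise it) and every $x$ is charged by some $P\in\Hh$, we have $C:=\sup_{P\in\Hh}\langle P,\hat E\rangle>0$; maximality then forces $C=1$ (else $\hat E/C$ would strictly majorise $\hat E$), and the supremum, being over the compact polytope $\Hh$, is attained and equals $\max_{\hat P\in\V}\langle\hat P,\hat E\rangle$. Thus $\V_0:=\{\hat P\in\V:\langle\hat P,\hat E\rangle=1\}$ is non‑empty, and exactly as in the proof of \Cref{prop:findualopt} — perturbing $\hat E$ to $\hat E+\ee\one{x}$ for $x\notin\bigcup_{\hat P\in\V_0}\supp\hat P$, using that some vertex outside $\V_0$ must charge $x$, and invoking maximality — one shows $\bigcup_{\hat P\in\V_0}\supp\hat P=\X$. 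Hence $P^\star:=|\V_0|^{-1}\sum_{\hat P\in\V_0}\hat P$ lies in $\Hh$, is fully supported, and satisfies $\langle P^\star,\hat E\rangle=1$. Here the proof of \Cref{prop:findualopt} would use that a fully supported $P^\star$ lies in $\relint\Hh$ to deduce $\langle P,\hat E\rangle=1$ for all $P\in\Hh$; this now fails, since with inequality constraints a fully supported measure may sit on the relative boundary of $\Hh$ (a slack constraint being active at $P^\star$), so a different argument is needed.

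Set $Q(\{x\}):=P^\star(\{x\})\,\hat E(x)$. This is a probability measure on $\X$ (its total mass is $\langle P^\star,\hat E\rangle=1$), and since $P^\star$ is fully supported, $\hat E(x)=Q(\{x\})/P^\star(\{x\})$ for all $x\in\X$, with $\supp Q=\{x:\hat E(x)>0\}$. Apply \Cref{lemma:gru} to $Q$: there is a unique KL projection $P_Q\in\Hh$, and the associated $E_Q$ maximises $\langle Q,\log\cdot\rangle$ over $\Ee_\Hh$. Because $\hat E$ is an e‑variable with $\hat E(x)=Q(\{x\})/P^\star(\{x\})$ on $\supp Q$ and $P^\star\in\Hh$, the last clause of \Cref{lemma:gru} gives $\hat E=E_Q$ on $\supp Q$, hence on all of $\X$ (both vanish off $\supp Q$). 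Therefore $\KL(Q\,|\,P^\star)=\langle Q,\log\hat E\rangle=\langle Q,\log E_Q\rangle=\KL(Q\,|\,P_Q)=\min_{P\in\Hh}\KL(Q\,|\,P)$, so $P^\star$ is itself a minimiser and, by uniqueness, $P^\star=P_Q$; thus the KL projection of $Q$ is fully supported. Since $P^\star=P_Q$ maximises the concave functional $P\mapsto\sum_x Q(\{x\})\log P(\{x\})$ over $\Hh$ and lies in the interior of the positivity constraints, the KKT conditions give $\hat E(x)=Q(\{x\})/P^\star(\{x\})=a-\lambda'\cdot\Phi'(x)-\lambda''\cdot\Phi''(x)$ for all $x\in\X$, with $a\in\R$, $\lambda'\in\R^{m'}$, and $\lambda''\in[0,+\infty)^{m''}$ (non‑negativity and complementary slackness $\lambda''_k\langle P^\star,\Phi''_k\rangle=0$ come from the inequality constraints). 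Pairing with $P^\star$ and using $\langle P^\star,\Phi'\rangle=0$ and complementary slackness yields $a=\langle P^\star,\hat E\rangle=1$. As $\hat E\succeq 0$ too, $(\lambda',\lambda'')\in\tilde\Lambda_{\Phi',\Phi''}$, i.e. $\hat E\in\tilde\Ee_\Hh^\vee$.

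The main obstacle is exactly the loss of the relative‑interior shortcut: the dual form of $\hat E$ must now be extracted from an optimality condition, and the key step is recognising that the combinatorially built witness $P^\star$ is forced to coincide with the KL projection $P_Q$, which makes the KKT conditions applicable at a fully supported point; a secondary care point is the sign $\lambda''\geq 0$ of the slack multipliers (where the inequality‑constraint convention enters). One can alternatively bypass the KL machinery: by strong LP duality, the condition $\langle P,\hat E\rangle\le1$ for all $P\in\Hh$ is equivalent to the existence of $\lambda'\in\R^{m'}$ and $\lambda''\in[0,+\infty)^{m''}$ with $\hat E\preceq 1-\lambda'\cdot\Phi'-\lambda''\cdot\Phi''$; the right‑hand side is then an e‑variable in $\tilde\Ee_\Hh^\vee$ majorising $\hat E$, so maximality of $\hat E$ forces equality, giving $\hat E\in\tilde\Ee_\Hh^\vee$.
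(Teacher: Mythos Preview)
Your proof is correct and follows the same overall route as the paper: build a fully supported $P^\star\in\Hh$ with $\langle P^\star,\hat E\rangle=1$ via the vertex argument, set $Q=\hat E\,P^\star$, and use the reverse information projection (Lemmas~\ref{lemma:logE} and~\ref{lemma:gru}) to obtain the dual form. The only difference is where the duality is read off: the paper takes the Lagrangian dual of the $E$-maximisation $\max_{E\in\Ee_\Hh}\langle Q,\log E\rangle$ (citing \citealp{agrawal2020optimal}) to get an $E_\lambda$ matching $\hat E$ on $\supp Q$, then uses maximality of $\hat E$ to extend the equality to all of $\X$; you instead identify $P^\star$ with the unique KL-minimiser $P_Q$ and apply KKT to the $P$-problem at this fully supported point, which yields $\hat E=1-\lambda'\cdot\Phi'-\lambda''\cdot\Phi''$ on all of $\X$ in one stroke. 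These are dual viewpoints of the same computation.

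Your closing LP-duality remark is a genuinely more elementary alternative that the paper does not mention: strong duality for the finite LP $\max_{P\in\Hh}\langle P,\hat E\rangle\le 1$ directly produces $(\lambda',\lambda'')\in\tilde\Lambda_{\Phi',\Phi''}$ with $\hat E\preceq 1-\lambda'\cdot\Phi'-\lambda''\cdot\Phi''$, and maximality finishes. This bypasses the construction of $P^\star$ and the information-projection lemmas entirely.
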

\begin{proof}
    First, we notice that following exactly the same steps as in the proof of \Cref{lemma:finopt}, we can show that there is $\hat P\in\Hh$ that is fully supported and such that $\langle \hat P,\hat E\rangle =1$. Define $\hat Q\in\PR_\X$ with mass $\hat Q(\{x\}) = \hat E(x)\hat P(\{x\})$. By \Cref{lemma:gru}, we have that $\hat E\in\argmax_{E\in\Ee_\Hh}\langle\hat Q,\log E\rangle$. Moreover, $\supp \hat Q = \supp \hat E = \{x\,:\,E(x)>0\}$. 

    Writing the dual problem for the maximisation of $E\mapsto \langle\hat Q,\log E\rangle$, a solution must be in the form $E_{\lambda}$ (on $\supp\hat Q$), with $\lambda\in\tilde\Lambda_{\Phi',\Phi''}$. (This follows from usual dual Lagrangian arguments, and can for instance be obtained following the reasoning in Appendix C.3 of \citealp{agrawal2020optimal}). By \Cref{lemma:gru}, it follows that $E_\lambda$ and $\hat E$ must coincide on $\supp \hat E$. In particular, $E_\lambda\succeq \hat E$. Since $\hat E$ is maximal, we get $E_\lambda=\hat E$ on the whole $\X$.  
\end{proof}
We can now prove \Cref{prop:loosely}. 
\begin{manual}{\Cref{prop:loosely}}
    Let $\Hh$ be a properly loosely constrained hypothesis on $\X$, with $\Phi':\X\to\R^{m'}$ and $\Phi'':\X\to\R^{m''}$ tight and slack constraints.
    Define $$\tilde\Lambda_{\Phi',\Phi''} = \big\{(\lambda',\lambda'')\;:\;\lambda'\in\R^{m'}\,,\;\lambda''\in[0,+\infty)^{m''}\,,\;1-\lambda'\cdot\Phi'-\lambda''\cdot\Phi''\succeq 0\big\}\,.$$ Then, the optimal e-class for $\Hh$ exists and coincides with $$\tilde\Ee_{\Hh}^\vee = \big\{1-\lambda'\cdot\Phi'-\lambda''\cdot\Phi''\;:\;(\lambda',\lambda'')\in\tilde\Lambda_{\Phi',\Phi''}\big\}\,.$$
\end{manual}
\begin{proof}
    For $\lambda'\in\R^{m'}$ and $\lambda''\in\R^{m''}$, let $E_{\lambda',\lambda''} = 1-\lambda'\cdot\Phi'-\lambda''\cdot\Phi''$. It is clear that $\tilde\Ee_{\Hh}^\vee$ is an e-class. Indeed, for any $(\lambda',\lambda'')\in\tilde\Lambda_{\Phi',\Phi''}$, $E_{\lambda',\lambda''}\succeq 0$ and, for any $P\in\Hh$, we have that $\langle P, E_{\lambda',\lambda''}\rangle = 1 -\lambda'\cdot\langle P,\Phi'\rangle - \lambda''\cdot\langle P, \Phi''\rangle \leq1$, since all the components of $\lambda''$ are non-negative.

    Let $\Phi:\X\to\R^{m'+m''}$ be the continuous function whose first $m'$ components are $\Phi'$ and the last $m''$ are $\Phi''$. Since $\Hh$ is loosely properly constrained, $0\in\relint(\conv\Phi)$. Moreover, note that $\PR_\Phi = \PR_{\Phi'}\cap\PR_{\Phi''}$, so $\Hh' = \{P\in\PR_{\Phi'}\cap\PR_{\Phi''}\,:\,\langle P, \Phi\rangle = 0\}$ is a properly constrained hypothesis, whereof $\Phi$ is a constraint. By construction $\Hh'\subseteq\Hh$, so $\Ee_{\Hh'}\supseteq\Ee_{\Hh}$. Moreover, $\Ee_{\Hh'}^\vee\supseteq\tilde\Ee_{\Hh}^\vee$. In particular, since every $E\in\tilde\Ee_{\Hh}^\vee$ is a maximal e-variable for $\Hh'$, it must also be a maximal e-variable for $\Hh$. Hence, by \Cref{lemma:maxopt} it is enough to show that $\tilde\Ee_{\Hh}^\vee$ is a majorising e-class. 
    
    If $\X$ has finite cardinality, this is \Cref{lemma:loosefin}. The general case is proved by reproducing step by step the proofs of \Cref{prop:compdualopt} and \Cref{thm:dualopt}. Indeed, the results that were used for those proofs are essentially based on the compactness of $\Lambda_{\Phi|S}$ (for suitable choices of $S\subseteq\X$). This property holds also for $\tilde\Lambda_{\Phi',\Phi''|S}$ (with obvious meaning of notation), as $\tilde\Lambda_{\Phi',\Phi''|S}$ is a closed subset of $\Lambda_{\Phi|S}$ (for the same argument as in \Cref{lemma:LFclo}). 
\end{proof}

\end{document}